\theoremstyle{plain}
\newtheorem{theorem}{Theorem}[section]
\newtheorem{lemma}[theorem]{Lemma}
\newtheorem{conjecture}[theorem]{Conjecture}
\newtheorem{question}[theorem]{Question}
\newtheorem{claim}{Claim}[theorem]
\newtheorem{definition}[theorem]{Definition}
\newtheorem{remark}[theorem]{Remark}
\newtheorem{fact}[theorem]{Fact}
\theoremstyle{nonumberplain}
\newtheorem{proof}{Proof}
\newcommand{\Ord}{\ensuremath{\text{{\rm Ord}}}}
\newcommand{\ZFC}{\ensuremath{\text{{\sf ZFC}}}}
\newcommand{\ZF}{\ensuremath{\text{{\sf ZF}}}}
\newcommand{\Con}{\ensuremath{\text{{\it Con}}}}
\newcommand{\FA}{\ensuremath{\text{{\sf FA}}}}
\newcommand{\MA}{\ensuremath{\text{{\sf MA}}}}
\newcommand{\PFA}{\ensuremath{\text{{\sf PFA}}}}
\newcommand{\BPFA}{\ensuremath{\text{{\sf BPFA}}}}
\newcommand{\BFA}{\ensuremath{\text{{\sf BFA}}}}
\newcommand{\BMM}{\ensuremath{\text{{\sf BMM}}}}
\newcommand{\MM}{\ensuremath{\text{{\sf MM}}}}
\newcommand{\SSP}{\ensuremath{\text{{\sf SSP}}}}
\let\mathbb=\varmathbb
\title{Martin's maximum revisited}
\author{Matteo Viale}
\date{}
\begin{document}
\maketitle
\begin{abstract}
We present several results relating the general theory of the stationary tower forcing developed by
Woodin with forcing axioms. 
In particular we show that, in combination with strong large cardinals,
the forcing axiom $\MM^{++}$ makes the $\Pi_2$-fragment of
 the theory of $H_{\aleph_2}$ 
invariant with respect to stationary set preserving forcings that preserve $\BMM$. 
We argue that this is a close to optimal generalization to
$H_{\aleph_2}$ of Woodin's absoluteness results for 
$L(\mathbb{R})$.
In due course of proving this we shall give a new proof of some of 
Woodin's results.
\end{abstract}




\section{Introduction}
In this introduction we shall take a long detour to motivate the results we want to present and to 
show how they stem out of Woodin's work on 
$\Omega$-logic.
We tried to make this introduction comprehensible 
to any person acquainted with the theory of forcing 
as presented for example in~\cite{kunen}.
The reader may refer to 
subsection~\ref{subsec:not} for unexplained notions.

Since its discovery in the early sixties by Paul Cohen~\cite{COH63}, 
forcing has played a central role in the development of modern set theory.
It was soon realized its fundamental role to establish the undecidability in $\ZFC$
of all the classical problems of set theory, among which Cantor's continuum problem.
Moreover, up to date, forcing (or class forcing) 
is the unique efficient method to obtain independence results over $\ZFC$. 
This method has found applications in virtually all fields of pure mathematics: 
in the last forty years natural problems of group theory, functional analysis, 
operator algebras, general topology, and many other subjects were shown to 
be undecidable by means of forcing (see~\cite{FAR11,SHE74} among others).
Perhaps driven by these observations Woodin introduced $\Omega$-logic, a non-constructive 
semantics for $\ZFC$ which rules out the independence results obtained by means of forcing.
 \begin{definition}
Given a model $V$ of $\ZFC$ and a family 
$\Gamma$ of partial orders in $V$, we say that
$V$ models that $\phi$ is $\Gamma$-consistent if $V^{\mathbb{B}}\models \phi$
for some $\mathbb{B}\in \Gamma$.\end{definition}
The notions of $\Gamma$-validity and of
$\Gamma$-logical consequence $\models_{\Gamma}$ are defined accordingly. 
Woodin's $\Omega$-logic is the $\Gamma$-logic obtained by letting $\Gamma$ be the class of 
all partial orders\footnote{There is a slight twist between Woodin's original definition of 
$\Omega$-consistency and our definition of $\Gamma$-consistency
when $\Gamma$ is the class of all posets. We shall explain in this footnote
why we decided to modify Woodin's original definition. On a first reading the reader may
skip it over. 
Woodin states that $\phi$ is $\Omega$-consistent in $V$ if there is some $\alpha$ and
some $\mathbb{B}\in V_\alpha$ such that $V^{\mathbb{B}}_\alpha\models\phi$.
The advantage of our definition (with respect to Woodin's)
is that it allows for
a simpler formulation of the forcing absoluteness 
results which are the motivation and the purpose of this paper and which assert
that over any model $V$ of some theory $T$ which extends $\ZFC$ any
statement $\phi$ of a certain form which $V$ models to be $\Gamma$-consistent actually holds in $V$. 
To appreciate the difference between Woodin's definition of $\Omega$-consistency and the current definition, 
assume that $\phi$ is a $\Pi_2$-formula and
that $\phi$ is $\Omega$-consistent in $V$ in the sense of Woodin: 
this means that there exist $\alpha$ and $\mathbb{B}$ such that 
$V_\alpha^{\mathbb{B}}\models\phi$, nonetheless it is well
 possible that $V^{\mathbb{B}}\not\models\phi$ and thus that $\mathbb{B}$ does
 not witness that $\phi$ is $\Omega$-consistent according to our definition. 
 Now if $V$ models $\ZFC+$\emph{there are class many Woodin cardinals which are a limit of Woodin
 cardinals} and $\phi^{L(\mathbb{R})}$ is 
 $\Omega$-consistent in $V$ in the sense of Woodin, this can be reflected 
 in the assertion that $\exists\alpha\in V$, $V_\alpha\models \phi^{L(\mathbb{R})}$, but not in the statement
 that $\phi^{L(\mathbb{R})}$ holds in $V$.
 On the other hand if $V$ models 
 $\ZFC+$\emph{there are class many Woodin cardinals which are a limit of Woodin
 cardinals} and $\phi^{L(\mathbb{R})}$ is $\Omega$-consistent in $V$
 according to our definition, we can actually reflect this fact in the assertion 
 that $V\models \phi^{L(\mathbb{R})}$.
 There is no real discrepancy on the two definitions because for each $n$ we can find
 a $\Sigma_n$ formula $\phi_n$ such that if $V$ is any model of $\ZF$,
 $V_\alpha\models\phi_n$ if and only if $V_\alpha\prec_{\Sigma_n} V$.
 Thus, if we want to prove that a certain $\Sigma_n$-formula 
 $\phi$ is $\Omega$-consistent according
 to our definition,
 we just have to prove that $\phi_n\wedge\phi$ is 
 $\Omega$-consistent in $V$ according to Woodin's definition.
 On the other hand the set of $\Gamma$-valid statements (according to Woodin's
 definition) is definable in $V$ in the parameters used to define $\Gamma$, while
 (unless we subsume that there is some $\delta$ such that $V_\delta\prec V$ and all the parameters used to define $\Gamma$ belong to $V_\delta$) 
we shall encounter the same problems to define in $V$ the class of $\Gamma$-valid
statements (according to our definition) 
as we do have troubles to define in $V$ the set of $V$-truths.}.
Prima facie $\Gamma$-logics 
appear to be even more intractable than $\beta$-logic (the logic given by the 
class of well founded models of $\ZFC$). However this is a misleading point of view, and,
as we shall see below, it is more correct to view these logics as means to radically change our point of view on forcing:
\begin{quote}
$\Gamma$-logics 
transform forcing in a tool to prove theorems over certain natural theories 
$T$ which extend $\ZFC$.
\end{quote}
The following corollary of Cohen's forcing theorem (which we dare to call
Cohen's Absoluteness Lemma) is an illuminating example:
\begin{lemma}[Cohen's Absoluteness]\label{Lem:CohAbs}
Assume $T\supset\ZFC$ and $\phi(x,r)$ is a $\Sigma_0$-formula in the parameter $r$ 
such that $T\vdash r\subset\omega$.
Then the following are equivalent:
\begin{itemize}
\item
$T\vdash[H_{\omega_1}\models\exists x\phi(x,r)]$.
\item
$T\vdash \exists x\phi(x,r)$\emph{ is $\Omega$-consistent\footnote{I.e.
$T\vdash$\emph{ There is a partial order $\mathbb{B}$ such that 
$\Vdash_{\mathbb{B}}\exists x\phi(x,r)$}.}}.
\end{itemize}
\end{lemma}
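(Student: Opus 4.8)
The plan is to prove the two implications separately. The direction from the first bullet to the second is routine: working inside an arbitrary $V\models T$, if $H_{\omega_1}\models\exists x\,\phi(x,r)$ then a witness $x\in H_{\omega_1}$ is, by $\Sigma_0$-absoluteness and the transitivity of $H_{\omega_1}$, a genuine witness in $V$, so $V\models\exists x\,\phi(x,r)$ and hence the trivial partial order $\{1\}$ already satisfies $\Vdash_{\{1\}}\exists x\,\phi(x,r)$; since this is a $\ZFC$-argument, it is a theorem of $T$.

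For the converse I would again fix $V\models T$, and use the hypothesis to fix a poset $\mathbb{B}\in V$ with $\Vdash_{\mathbb{B}}\exists x\,\phi(x,r)$; note that $r\subseteq\omega$ gives $r\in H_{\omega_1}^V$. Choose a large regular $\theta$ and a countable elementary submodel $M\prec H_\theta^V$ with $\mathbb{B},r\in M$, and let $\pi\colon M\to\bar M$ be the transitive collapse. Since every natural number is definable, $\omega\subseteq M$, so $\pi$ fixes $r$; put $\bar{\mathbb{B}}=\pi(\mathbb{B})$. By elementarity, $\bar M$ thinks that $\bar{\mathbb{B}}$ forces $\exists x\,\phi(x,r)$. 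As $\bar M$ is countable, there is already in $V$ an $\bar M$-generic filter $g\subseteq\bar{\mathbb{B}}$, and the forcing theorem relativized to $\bar M$ yields $\bar M[g]\models\exists x\,\phi(x,r)$. A witness $x\in\bar M[g]$ then belongs to the countable transitive set $\bar M[g]$, hence to $H_{\omega_1}^V$, and $\Sigma_0$-absoluteness of $\phi$ propagates $\phi(x,r)$ upward to $H_{\omega_1}^V$. Therefore $H_{\omega_1}^V\models\exists x\,\phi(x,r)$; since only $\ZFC$ together with $T\vdash r\subseteq\omega$ were used, $T\vdash[H_{\omega_1}\models\exists x\,\phi(x,r)]$.

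The one real obstacle is that $\Sigma_1$ statements such as $\exists x\,\phi(x,r)$ need not be preserved downward from a generic extension to $V$, so forcing-consistency does not by itself hand back a witness in $V$. The device that resolves this is exactly the reflection to a countable elementary submodel and its transitive collapse $\bar M$: one forces over the countable model $\bar M$ instead of over $V$, so that an $\bar M$-generic filter is available in $V$ outright, and the $\Sigma_0$-ness of $\phi$ guarantees that the witness produced there is recognized correctly by $H_{\omega_1}^V$. (In fact the same argument proves the equivalence over a single model $V$; I state the theory version because that is the form used in the rest of the paper.)
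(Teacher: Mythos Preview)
Your proof is correct and follows essentially the same route as the paper's: reflect to a countable elementary submodel containing the poset and the parameter, transitively collapse, force over the collapsed model inside $V$, and use $\Sigma_0$-absoluteness to transfer the witness from the countable extension into $H_{\omega_1}^V$. The only cosmetic differences are that the paper takes $M\prec H_{|\mathbb{P}|^+}$ rather than $M\prec H_\theta$, and it treats the forward direction as too trivial to write out, whereas you spell it out explicitly.
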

Observe that for any model $V$ of $\ZFC$,
$H_{\omega_1}^V\prec_{\Sigma_1} V$ and that for 
any theory $T\supseteq\ZFC$ there is a recursive translation of
 $\Sigma^1_2$-properties (provably $\Sigma^1_2$ over $T$)
 into $\Sigma_1$-properties over $H_{\omega_1}$ (provably $\Sigma_1$ over the same theory $T$)
\cite[Lemma 25.25]{JEC03}. 
Summing up we get that a $\Sigma^1_2$-statement is provable in some theory $T\supseteq\ZFC$ 
iff the  corresponding $\Sigma_1$-statement over $H_{\omega_1}$ is 
provably $\Omega$-consistent over the same theory $T$. 
This shows that already in $\ZFC$ forcing is an extremely
powerful tool to prove theorems. 
Moreover compare Lemma~\ref{Lem:CohAbs} 
with Shoenfield's absoluteness theorem stating that
the truth value of a $\Sigma^1_2$-property 
is the same in all transitive models $M$ of $\ZFC$ to which
$\omega_1$ belongs~\cite[Theorem 25.20]{JEC03}. 
These two results are very similar in nature but the first one is more constructive. 
For example a proof that a $\Sigma^1_2$-property holds in $L$
does not yield automatically
that this property is provable in $\ZFC$ but just that it holds in all uncountable transitive models of
$\ZFC$ to which $\omega_1$ belongs; 
yet this property could fail in some non-transitive model of $\ZFC$ or in some 
transitive model of $\ZFC$ whose ordinals have order type at most $\omega_1$.

We briefly sketch why Lemma~\ref{Lem:CohAbs}
holds since this will outline many of the ideas we are heading for:
\begin{proof}
We shall actually prove the following slightly
stronger formulation\footnote{In the statement below we do not require that
the existence of a 
partial order witnessing the $\Omega$-consistency of
$\exists x\phi(x,r)$ in $V$ is provable in $T$.} of the non-trivial direction in the 
equivalence:
\begin{quote}
Assume $V$ is a model of $T$. Then
$H_{\omega_1}\models\exists x\phi(x,r)$ if and only if 
$V\models \exists x\phi(x,r)$\emph{ is $\Omega$-consistent}. 
\end{quote}
To simplify 
the exposition we prove it with the further assumption that 
$V$ is a \emph{transitive} model.
With the obvious care in details essentially the same argument works for any first order
model of $T$.
So assume $\phi(x,\vec{y})$ is a $\Sigma_0$-formula and
$\exists x\phi(x,\vec{r})$ is 
$\Omega$-consistent in $V$ with parameters $\vec{r}\in \mathbb{R}^V$.
Let $\mathbb{P}\in V$ be a partial order that witnesses it. 
Pick a model $M\in V$ such that 
$M\prec (H_{|\mathbb{P}|^+})^V$, $M$ is countable in $V$,  and
$\mathbb{P},\vec{r}\in M$. 
Let $\pi_M:M\to N$ be its transitive collapse
and $\mathbb{Q}=\pi_M(\mathbb{P})$. Notice also that 
$\pi(\vec{r})=\vec{r}$.
Since $\pi_M$ is an isomorphism of $M$ with $N$,
\[
N\models(\Vdash_{\mathbb{Q}}\exists x\phi(x,\vec{r})).
\] 
Now let $G\in V$ be $N$-generic for $\mathbb{Q}$ ($G$ exists since $N$ is countable),
then, by Cohen's fundamental theorem of forcing applied in $V$ to $N$, 
we have that $N[G]\models\exists x\phi(x,\vec{r})$.
So we can pick $a\in N[G]$ such that $N[G]\models\phi(a,\vec{r})$. 
Since $N,G\in (H_{\aleph_1})^V$, we have that
$V$ models that $N[G]\in H_{\omega_1}^V$ and thus $V$ models that 
$a$ as well belongs to $H_{\omega_1}^V$.
Since $\phi(x,\vec{y})$ is a $\Sigma_0$-formula, 
$V$ models that $\phi(a,\vec{r})$ is absolute between the
transitive sets $N[G]\subset H_{\omega_1}$
to which $a,\vec{r}$ belong. In particular
$a$ witnesses in $V$ that $H_{\omega_1}^V\models\exists x\phi(x,\vec{r})$. 
\end{proof}
If we analyze the proof of this Lemma, we immediately realize that a key observation
is the fact
that for any poset
$\mathbb{P}$ there is some countable $M\prec H_{|\mathbb{P}|^+}$ such that
$\mathbb{P}\in M$ and there is an $M$-generic filter for $\mathbb{P}$. 
The latter statement is an easy outcome of Baire's category theorem and is provable in $\ZFC$.
For a given regular cardinal $\lambda$ and a partial order $\mathbb{P}$,
let  $S^\lambda_{\mathbb{P}}$ be the set consisting of
$M\prec H_{\max(|\mathbb{P}|^+,\lambda)}$ such that there is an $M$-generic filter for
$\mathbb{P}$ and $M\cap\lambda\in\lambda>|M|$.
Then an easy outcome of Baire's category theorem is that
$S^{\aleph_1}_{\mathbb{P}}$ is a club subset of $P_{\omega_1}(H_{|\mathbb{P}|^+})$ 
for every partial order $\mathbb{P}$. 
If we analyze the above proof what we actually needed was just the
stationarity of $S^{\aleph_1}_{\mathbb{P}}$ to infer the existence of the desired countable model 
$M\prec H_{|\mathbb{P}|^+}$ such that $r\in M$ and there is an $M$-generic filter for $\mathbb{P}$.
For any regualr cardinal $\lambda$, let $\Gamma_\lambda$ be the class of posets 
such that $S^\lambda_{\mathbb{P}}$ is stationary.
In particular we can generalize Cohen's absoluteness Lemma as follows:
\begin{lemma}[Generalized Cohen Absoluteness]\label{Lem:GenCohAbs}
Assume $V$ is a model of $\ZFC$
and $\lambda$ is regular and uncountable in $V$.
Then $H_{\lambda}^V\prec_{\Sigma_1} V^P$ if 
$P\in\Gamma_{\lambda}$.
\end{lemma}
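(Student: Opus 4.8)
The plan is to prove the two halves of the claimed $\Sigma_1$-elementarity separately, with essentially all the work in the nontrivial half. One direction uses no hypothesis on $P$: if $\psi=\exists x\,\phi(x,\vec r)$ with $\phi$ a $\Sigma_0$-formula, $\vec r\in H_\lambda^V$, and $H_\lambda^V\models\psi$, then a witness $a\in H_\lambda^V$ also lies in $V[G]$ for every $P$-generic $G$, and $\phi(a,\vec r)$ persists upward between the transitive classes $H_\lambda^V\subseteq V\subseteq V[G]$ (being $\Sigma_0$); hence $V^P\models\psi$.

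For the converse, suppose $V^P\models\exists x\,\phi(x,\vec r)$ with $\vec r\in H_\lambda^V$; the goal is to find $b\in H_\lambda^V$ with $\phi(b,\vec r)$. First I would fix a regular $\theta$ large enough that the forcing relation for $\Sigma_1$-formulas over $P$ is computed correctly in $H_\theta$ (possible by L\'evy reflection) and $\theta\ge\max(|P|^+,\lambda)$, replacing $S^\lambda_P$ by its stationary projection to $P_\lambda(H_\theta)$ if necessary. Using $P\in\Gamma_\lambda$ together with the fact that the collection of $M$ with $\{P,\vec r,\lambda\}\cup\trcl(\{\vec r\})\subseteq M$ forms a club in $P_\lambda(H_\theta)$ --- this is the one place where $\vec r\in H_\lambda$ is used, as $|\trcl(\{\vec r\})|<\lambda$ is exactly what makes $\trcl(\{\vec r\})\subseteq M$ a genuine club condition and not merely a stationary one --- I can choose $M\prec H_\theta$ with $|M|<\lambda$, $M\cap\lambda\in\lambda$, $\{P,\vec r,\lambda\}\cup\trcl(\{\vec r\})\subseteq M$, carrying (in $V$) an $M$-generic filter $g$ for $P$.

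Now let $\pi\colon M\to N$ be the transitive collapse and $Q=\pi(P)$. Since $\trcl(\{\vec r\})\subseteq M$, the collapse fixes $\vec r$; so, by the choice of $\theta$, elementarity of $M$, and the definability of the forcing relation, $N\models(1_Q\Vdash_Q\exists x\,\phi(x,\vec r))$. Let $\bar h$ be the filter on $Q$ generated by $\pi[g\cap M]$: a routine density argument inside $M$ shows $\pi[g\cap M]$ is directed, and the $M$-genericity of $g$ for $P$ transfers under $\pi$ into $\bar h$ meeting every dense subset of $Q$ lying in $N$. Thus $\bar h$ is genuinely $N$-generic for $Q$, and, as $N$ is transitive, the forcing theorem holds over $N$; hence $N[\bar h]\models\exists x\,\phi(x,\vec r)$. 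Since $\phi$ is $\Sigma_0$ and $N[\bar h]$ is transitive, a witness $b\in N[\bar h]$ satisfies $\phi(b,\vec r)$ outright; and since $N[\bar h]$ is a transitive set of cardinality $<\lambda$ it is contained in $H_\lambda^V$, so $b\in H_\lambda^V$ and $H_\lambda^V\models\exists x\,\phi(x,\vec r)$. Observe that $b$ lands in $H_\lambda^V$ automatically, precisely because a generic extension of a transitive set of size $<\lambda$ is again of size $<\lambda$ --- this is the manifestation here of $H_\lambda^{V[G]}\prec_{\Sigma_1}V[G]$.

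The step I expect to be the main obstacle is the choice of $M$: in the proof of Lemma~\ref{Lem:CohAbs} one has $\vec r\subseteq\omega\subseteq M$ for free, so the transitive collapse automatically fixes the parameters, whereas for arbitrary regular uncountable $\lambda$ one must arrange $\trcl(\{\vec r\})\subseteq M$ simultaneously with $M\cap\lambda$ being an ordinal and with the existence of an $M$-generic filter for $P$, and must check that these requirements remain jointly compatible with the stationarity of $S^\lambda_P$. The remaining ingredients --- collapsing, promoting the $M$-generic filter to an honestly $N$-generic one, and invoking the forcing theorem over the transitive model $N$ --- go through as in the $\lambda=\aleph_1$ case.
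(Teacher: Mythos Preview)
Your proposal is correct and follows the same approach the paper intends. The paper does not give a separate proof of Lemma~\ref{Lem:GenCohAbs}; it simply observes, after proving Lemma~\ref{Lem:CohAbs}, that the only use of countability in that argument was to obtain an $M\prec H_{|\mathbb{P}|^+}$ with the parameters inside and an $M$-generic filter for $P$, and that the stationarity of $S^\lambda_P$ supplies exactly this for arbitrary regular uncountable $\lambda$. Your write-up spells out the two places where the general case requires extra care that the paper leaves implicit: arranging $\trcl(\{\vec r\})\subseteq M$ so that the collapse fixes the parameters (automatic when $\vec r\subseteq\omega$, a genuine club condition here because $\vec r\in H_\lambda$), and transferring the $M$-generic filter $g$ through the collapse to an $N$-generic filter $\bar h$ for $Q$ (in the countable case the paper instead produces an $N$-generic directly via Baire category). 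Both points are handled correctly.
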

Let $\FA_\nu(\mathbb{P})$ assert that: 
\emph{$P$ is a partial order such that for every collection of 
$\nu$-many dense subsets of $P$ there is a filter $G\subset P$ 
meeting all the dense 
sets in this collection.} 
Let $\BFA_\nu(\mathbb{P})$ assert that
$H_{\nu^+}^V\prec_{\Sigma_1} V^P$.

Given a class of posets $\Gamma$, let
$\FA_\nu(\Gamma)$ ($\BFA_\nu(\Gamma)$) hold 
if $\FA_\nu(P)$ ($\BFA_\nu(P)$) holds for all $P\in \Gamma$.
Then Baire's category theorem just says that 
$\FA_{\aleph_0}(\Omega)$ holds where
$\Omega$ is the class of all posets.
It is not hard to check that if $S^\lambda_P$ is stationary, then $\FA_\gamma(P)$
holds for all $\gamma<\lambda$.
Woodin~\cite[Theorem2.53]{woodin} 
proved that if $\lambda=\nu^+$ is a successor cardinal
\emph{$P\in\Gamma_\lambda$ if and only if
$\FA_\nu(P)$ holds} 
(see for more details subsection~\ref{sec:forcax} and Lemma~\ref{lem:WOOFA}).
In particular for all cardinals $\nu$ 
we get that $\Gamma_{\nu^+}$ is the class of partial orders
$P$ such that $\FA_\nu(P)$ holds or (equivalently) such that
$S^{\nu^+}_P$ is stationary. 
With this terminology Cohen's
absoluteness Lemma states that
$\FA_\nu(P)$ implies $\BFA_\nu(P)$ for all infinite 
cardinals $\nu$.

Observe that many interesting problems of set theory can be formulated as $\Pi_2$-properties
of $H_{\nu^+}$ for some cardinal
$\nu$ (an example is Suslin's hypothesis, which
can be formulated as a $\Pi_2$-property of $H_{\aleph_2}$). 
Lemma~\ref{Lem:GenCohAbs}
gives a very powerful general framework to prove in any given model $V$ of $\ZFC$
whether a $\Pi_2$-property $\forall x\exists y\phi(x,y,z)$ (where $\phi$ is $\Sigma_0$) holds for
some $H_{\nu^+}^V$ with $p\in H_{\nu^+}^V$ replacing $z$:
It suffices to prove that for any $a\in H_{\nu^+}^V$,
$V$ models that $\exists y\phi(a,y,p)$ is $\Gamma_{\nu^+}$-consistent.
This shows that if we are in a model $V$ of $\ZFC$
where $\Gamma_{\nu^+}^V$ contains interesting and manageable 
families of partial orders
$\Gamma_{\nu^+}^V$-logic is a powerful tool to study the 
$\Pi_2$-theory of $H_{\nu^+}^V$. 
In particular this is always the case for $\nu=\aleph_0$ in any model of
$\ZFC$, since 
$\Gamma_{\aleph_1}$ is the class of all posets. 
Moreover this is certainly
one of the reasons of the success the forcing 
axiom Martin's Maximum $\MM$ and its bounded version
$\BMM$ have had in settling many relevant problems of set theory
which can be formulated as $\Pi_2$-properties of the structure $H_{\aleph_2}$
 and that boosted the study of bounded versions of forcing 
 axioms\footnote{Bagaria~\cite{BAG00} and Stavi, V{\"a}{\"a}n{\"a}nen~\cite{STAVAA02} 
 are the first who realize that bounded forcing axioms 
 are powerful tools to describe the $\Pi_2$-theory of $H_{\aleph_2}$ exactly for the
 reasons we are pointing out.}.

For any set theorist willing to accept large cardinal axioms,
Woodin has been able to show that
$\Omega$-logic gives a natural non-constructive semantics for
the full first order theory 
of $L(\mathbb{R})$ and not just for the $\Sigma_1$-fragment of 
$H_{\aleph_1}\subset L(\mathbb{R})$ which is given by Cohen's absoluteness 
Lemma.
Woodin~\cite[Theorem 2.5.10]{LAR04} has proved that assuming large  
cardinals
\emph{$\Omega$-truth is $\Omega$-invariant} i.e.: 
\begin{quote}
Let $V$ be any model of $\ZFC+$\emph{there are
class many Woodin cardinals}.
Then for any statement $\phi$ with parameters in $\mathbb{R}^V$, 
\[
V\models(\phi\text{ is }\Omega\text{-consistent})
\]
if and only if there is $\mathbb{B}\in V$ such that
\[
V^{\mathbb{B}}\models(\phi\text{ is }\Omega\text{-consistent}). 
\]
\end{quote}
Thus $\Omega$-logic, the logic of forcing, has a notion of truth which forcing itself cannot change. 
Woodin~\cite[Theorem 3.1.7]{LAR04} also proved that 
the theory $\ZFC+$\emph{large cardinals} 
decides in $\Omega$-logic the theory of $L(\mathbb{R})$, i.e.:  
\begin{quote}
For any model $V$ of $\ZFC+$\emph{there are
class many Woodin cardinals which are a limit of Woodin cardinals} 
and any first order formula $\phi$, 
$L(P_{\omega_1}\Ord)^V\models\phi$
if and only if 
\[
V\models[L(P_{\omega_1}\Ord)\models\phi] \text{ is $\Omega$-consistent}.
\]
\end{quote}
He pushed further these result and showed that if $T$ extends
$\ZFC+$\emph{ There are class many measurable Woodin cardinals}, then $T$ 
decides in $\Omega$-logic 
any mathematical problem expressible as a (provably in $T$) 
$\Delta^2_1$-statement. 
These are optimal and sharp results: it is well known that the Continuum hypothesis $\mathsf{CH}$ 
(which is provably not a $\Delta^2_1$-statement) 
and the first order theory of $L(P(\omega_1))$ cannot be decided by 
$\ZFC+$\emph{ large cardinal axioms} in $\Omega$-logic. 
Martin and Steel's result that projective determinacy holds in $\ZFC^*$ complements  
the fully satisfactory description $\Omega$-logic and large cardinals
give of the first order theory of the structure 
$L(\mathbb{R})$ in models of $\ZFC^*$. 
Moreover we can make these results meaningful also for a non-platonist, 
for example we can reformulate the statement that $\ZFC^*$ decides in 
$\Omega$-logic the theory of $L(\mathbb{R})$ 
as follows:
\begin{quote}
Assume $T$ extends $\ZFC+$\emph{there are
class many Woodin cardinals which are a limit of Woodin cardinals}. 
Let $\phi(r)$ be a formula in the parameter $r$ such that
$T\vdash r\subseteq\omega$. Then the following are equivalent:
\begin{itemize}
\item
$T\vdash[L(P_{\omega_1}\Ord)\models\phi(r)]$.
\item
$T\vdash\phi(r)^{L(P_{\omega_1}\Ord)}$\emph{ is $\Omega$-consistent.}
\end{itemize}
\end{quote}

The next natural stage is to determine
to what extent Woodin's results on $\Omega$-logic and the theory of $H_{\aleph_1}$
and $L(\mathbb{R})$ 
can be reproduced for $H_{\aleph_2}$ and $L(P(\omega_1))$.
There is also for these theories a fundamental
result of Woodin: he introduced an axiom $(*)$ 
which is a strengthened version of $\BMM$ with the property that 
the theory of $H_{\aleph_2}$ with parameters 
is invariant with respect to \emph{all} forcings which preserve this 
axiom\footnote{We refer the reader to~\cite{LARHST} for a thorough development of
the properties of models of the $(*)$-axiom.}.
The $(*)$-axiom is usually formulated~\cite[Definition 7.9]{LARHST} 
as the assertion that 
$L(\mathbb{R})$ is a model of the axiom of determinacy and
$L(P(\omega_1))$ is a generic extension of $L(\mathbb{R})$ by
an homogeneous forcing $\mathbb{P}_{\max}\in L(\mathbb{R})$. 
 
There are two distinctive features of $(*)$: 
\begin{enumerate}
\item
It asserts a 
smallness principle for
$L(P(\omega_1))$: on the one hand the homogeneity of 
$\mathbb{P}_{\max}$ entails that the first order theory of $L(P(\omega_1))$
is essentially determined by the theory of the underlying $L(\mathbb{R})$.
On the other hand $(*)$ implies that $L(P(\omega_1))=L(\mathbb{R})[A]$ for any
$A\in P(\omega_1)\setminus L(\mathbb{R})$.

\item
$(*)$ entails that $H_{\omega_2}^V\prec H_{\omega_2}^{V^P}$ for any notion of 
forcing $P\in V$ which preserves $(*)$ even if $\FA_{\aleph_2}(P)$ may be false
for such a $P$. 
\end{enumerate}

In this paper we propose a different approach to the analysis of the theory of
$H_{\aleph_2}$ then the one given by $(*)$. 
We do not seek for an axiom system $T\supseteq \ZFC$ which makes
the theory of $H_{\aleph_2}$ invariant with respect to \emph{all} forcing
notions which preserve a suitable fragment of $T$.
Our aim is to show that the strongest forcing axioms in combination with large cardinals
give an axiom system $T$ which extends $\ZFC$ and
makes the theory of $H_{\aleph_2}$ invariant with respect to all forcing 
notions $P$ which preserve a suitable fragment of $T$ 
\emph{and for which we can predicate $\FA_{\aleph_1}(P)$} (i.e. forcings $P$ which are in the 
class  $\Gamma_{\aleph_2}$).

This leads us to analyze the properties of the class $\Gamma_{\aleph_2}$ in models of
$\ZFC^*$.
This is a delicate matter, first of all 
Shelah proved that $\FA_{\aleph_1}(P)$ fails for any $P$ which does not 
preserve stationary subsets of $\omega_1$. Nonetheless it cannot be decided in $\ZFC$ 
whether this is a necessary condition for a poset $P$ in order to have the failure of
$\FA_{\aleph_1}(P)$.
For example let $P$ be Namba forcing: it is provable in $\ZFC$ that $P$ 
preserve stationary subsets 
of $\omega_1$, however 
in $L$ $\FA_{\aleph_1}(P)$ fails while in a
model of Martin's maximum $\MM$ $\FA_{\aleph_1}(P)$ holds.
This shows that we cannot hope to prove general theorems about $H_{\aleph_2}$
in $\ZFC^*$ alone using forcing, but just theorems about the properties of
$H_{\aleph_2}$ for 
particular theories $T$ which extend $\ZFC^*$ and
for which we have a nice description 
of the class $\Gamma_{\aleph_2}$.

In this respect it is well known that the study of the properties of
$H_{\aleph_2}$ in models of Martin's maximum $\MM$, 
of the proper forcing axiom $\PFA$, or of their bounded
versions $\BMM$ and $\BPFA$ has been particularly 
successful. Moreover it is well known that the strongest such theories 
($\MM$ and $\PFA$) are able to settle many relevant questions about the whole universe $V$
and to show that many properties of the universe reflect to 
$H_{\aleph_2}$\footnote{The literature is vast, we mention just a sample of the most 
recent results with no hope of being 
exhaustive:~\cite{MOO06,TOD02,woodin} present different examples of
well-ordering of the reals definable in $H_{\aleph_2}$ (with parameters in $H_{\aleph_2}$)
in models of $\BMM$ ($\BPFA$),~\cite{COX11a,VIA11b,VIAWEI11} present
several different reflection properties between the universe and $H_{\aleph_2}$ in models of
$\MM^{++}$ ($\PFA$,$\MM$),~\cite{FAR11,moore.basis} present applications of $\PFA$
to the solution of problems coming from operator algebra and general topology
and which can be formulated as (second order) properties of the structure $H_{\aleph_2}$.}. 
The reason is at least two-fold:
\begin{itemize}
\item First of all there is a manageable description 
of the class $\Gamma_{\aleph_2}$ in models of 
$\MM$ ($\PFA$,$\MA$): this is the class of stationary set preserving posets for
$\MM$ (respectively contains the class of proper forcings for $\PFA$, and the class of 
CCC partial orders for $\MA$).


\item $\MM$
realizes the slogan that 
$\FA_{\aleph_1}(P)$ holds
for any partial order $P$ for which we cannot prove that $\FA_{\aleph_1}(P)$ 
fails, thus $\MM$ substantiates a natural 
maximality principle for the class $\Gamma_{\aleph_2}$.
\end{itemize}

We believe that the arguments we presented so far already show that for any model $V$ of
$\ZFC$ and any successor cardinal $\lambda\in V$ it is of central interest to analyze what is the class
$\Gamma_\lambda$ in $V$, since this gives a powerful tool to 
ivestigate the $\Pi_2$-theory
of $H_\lambda^V$. Moreover in this respect $\ZFC+\MM$ is particularly appealing
since it asserts the maximality of the class $\Gamma_{\aleph_2}$. 
The main result of this paper is to show that a natural strengthening of $\MM$ (denoted by 
$\MM^{++}$) which holds in the standard models of $\MM$, 
in combination with Woodin cardinals, makes $\Gamma_{\aleph_2}$-logic 
the correct semantics to describe completely the $\Pi_2$-theory of $H_{\aleph_2}$ in models
of $\MM^{++}$. In particular we shall prove the following theorem:
\begin{theorem}\label{thm:absMMLORD-0}
Assume
$\MM^{++}$ holds and 
there are class many Woodin cardinals.
Then 
\[
H_{\aleph_2}^V\prec_{\Sigma_2} H_{\aleph_2}^{V^P}
\]
for all stationary set preserving posets $P$ which preserve $\BMM$ 
\end{theorem}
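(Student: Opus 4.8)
The plan is to exploit Generalized Cohen Absoluteness (Lemma~\ref{Lem:GenCohAbs}) in the form $H_{\aleph_2}^V \prec_{\Sigma_1} V^Q$ for every $Q \in \Gamma_{\aleph_2}$, combined with Woodin's characterization $\Gamma_{\aleph_2} = \{P : \FA_{\aleph_1}(P)\}$, and to bootstrap the $\Sigma_1$-absoluteness to $\Sigma_2$-absoluteness by iterating the forcing. Fix a stationary set preserving poset $P$ that preserves $\BMM$, and let $G$ be $V$-generic for $P$. First I would verify the easy direction: a $\Sigma_1$ statement (with parameters in $H_{\aleph_2}^V$) true in $H_{\aleph_2}^V$ remains true in $H_{\aleph_2}^{V[G]}$, because $H_{\aleph_2}^V \subseteq H_{\aleph_2}^{V[G]}$ (stationary set preservation keeps $\omega_1$ and hence $\aleph_2$ is not collapsed below, and no new bounded subsets of $\omega_1$ of the relevant kind destroy the witness) and $\Sigma_1$ statements are upward absolute to larger transitive structures containing the witness; the $\Pi_1$ direction then follows symmetrically once we know the two structures have the same $\Sigma_1$-theory. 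The real content is the upward transfer of $\exists x \forall y\, \phi$ and the downward transfer of $\forall x \exists y\, \phi$.

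For the downward direction: suppose $H_{\aleph_2}^{V[G]} \models \forall x \exists y\, \phi(x,y,p)$ with $p \in H_{\aleph_2}^V$, and take $a \in H_{\aleph_2}^V$; we must find $b \in H_{\aleph_2}^V$ with $H_{\aleph_2}^V \models \phi(a,b,p)$. The statement ``$\exists y\, \phi(a,y,p)$'' is $\Sigma_1$ over $H_{\aleph_2}$ and holds in $V[G]$, so by Lemma~\ref{Lem:GenCohAbs} applied \emph{in $V$} — noting $P \in \Gamma_{\aleph_2}^V$ because $\MM^{++}$ in $V$ implies $\FA_{\aleph_1}(P)$ for every stationary set preserving $P$, so $H_{\aleph_2}^V \prec_{\Sigma_1} V^P$ — we get $H_{\aleph_2}^V \models \exists y\, \phi(a,y,p)$, which is exactly what we want. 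Here the key point, to be spelled out, is that $\MM^{++}$ is a $\Pi_2$ (indeed, in the right formulation, expressible over $H_{\aleph_2}$ modulo the Woodin cardinals) assertion, so that the hypothesis ``$P$ preserves $\BMM$'' together with the large cardinals will be used to guarantee that $\MM^{++}$, not merely $\BMM$, continues to hold sufficiently in $V[G]$ — or more precisely that enough of the forcing-axiom strength survives to run the symmetric argument in $V[G]$.

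For the upward direction: suppose $H_{\aleph_2}^V \models \exists x \forall y\, \phi(x,y,p)$, witnessed by $a \in H_{\aleph_2}^V$, so $H_{\aleph_2}^V \models \forall y\, \phi(a,y,p)$, a $\Pi_1$ statement. We want $H_{\aleph_2}^{V[G]} \models \forall y\, \phi(a,y,p)$. If it failed, there would be $b \in H_{\aleph_2}^{V[G]}$ with $H_{\aleph_2}^{V[G]} \models \lnot\phi(a,b,p)$; now ``$\exists y\, \lnot\phi(a,y,p)$'' is $\Sigma_1$ over $H_{\aleph_2}$, true in $V[G]$, hence — if $V[G]$ also satisfied enough of $\MM^{++}$ and we could force from $V[G]$ back to a model of $\MM^{++}$ agreeing with $V$ on $H_{\aleph_2}$ — we would contradict $H_{\aleph_2}^V \models \forall y\, \phi$. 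This is where the machinery of the stationary tower and the preservation of $\BMM$ enters, following Woodin's absoluteness template: one uses the Woodin cardinals to find, over $V[G]$, a further stationary set preserving forcing $R$ with $H_{\aleph_2}^{V[G]} \prec_{\Sigma_1} V[G]^R$ landing in a model where one can compare back with $V$; the assumption that $P$ preserves $\BMM$ is exactly what makes $V[G]$ still a suitable base for this. The main obstacle, and the heart of the proof, is precisely this step: establishing that ``$P$ preserves $\BMM$'' (rather than the full $\MM^{++}$) already suffices to push a $\Sigma_1$-witness appearing in $V[G]$ down to $V$, which is presumably where a new proof of (a fragment of) Woodin's results — via the stationary tower — is needed, replacing the appeal to $\PP_{\max}$ by a direct tower argument.
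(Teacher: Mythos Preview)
Your two ``directions'' are logically the same direction. Showing that a $\Pi_2$ statement true in $H_{\aleph_2}^{V[G]}$ reflects to $H_{\aleph_2}^V$ (your ``downward'') is, by contraposition over all $\Sigma_0$ $\phi$, exactly the same as showing that a $\Sigma_2$ statement true in $H_{\aleph_2}^V$ persists to $H_{\aleph_2}^{V[G]}$ (your ``upward''). Your argument for it --- reflect the $\Sigma_1$ subformula via $H_{\aleph_2}^V\prec_{\Sigma_1} V^{P\restriction q}$, which holds since $\MM$ gives $\FA_{\aleph_1}(P\restriction q)$ --- is correct, and is even a slight simplification of the paper's treatment of that half. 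But you have established only one of the two implications needed for $\prec_{\Sigma_2}$.

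The missing direction is the substance of the theorem: if $H_{\aleph_2}^V\models\forall y\,\exists x\,\phi(x,y,p)$ then $H_{\aleph_2}^{V[G]}\models\forall y\,\exists x\,\phi(x,y,p)$. Here one must produce, for an \emph{arbitrary} $a\in H_{\aleph_2}^{V[G]}$ (in particular for $a\notin V$), a witness $b\in H_{\aleph_2}^{V[G]}$ to $\phi(b,a,p)$. Nothing in your outline touches this: Generalized Cohen Absoluteness over $V$ says nothing about parameters outside $V$, and forcing further over $V[G]$ does not by itself connect back to the $\Pi_2$ truth in $V$. The paper's key move, absent from your sketch, is Theorem~\ref{thm:MM++}: $\MM^{++}$ (not merely $\MM$) yields a complete embedding $i:P\to\mathbb{R}_\delta\restriction T$ whose \emph{quotient} is forced by $P$ to be stationary set preserving. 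Taking $G'$ $V$-generic for the tower and $G=i^{-1}[G']$, one has $V\subseteq V[G]\subseteq V[G']$ with $H_{\aleph_2}^V\prec H_{\aleph_2}^{V[G']}$ fully elementarily (from the tower) and $V[G']$ an $\SSP$ extension of $V[G]$. The $\Pi_2$ statement then passes from $V$ to $V[G']$ by full elementarity; for each $a\in H_{\aleph_2}^{V[G]}$ the $\Sigma_1$ statement $\exists x\,\phi(x,a,p)$ holds in $H_{\aleph_2}^{V[G']}$ and reflects down to $H_{\aleph_2}^{V[G]}$ precisely because $V[G]\models\BMM$ and the quotient is $\SSP$. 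This is where both hypotheses --- $\MM^{++}$ rather than $\MM$, and the preservation of $\BMM$ by $P$ --- do their work, and neither appears in a usable form in your proposal.
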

Notice that we can reformulate the theorem in the same fashion of 
Woodin's and Cohen's results as follows:
\begin{theorem}\label{thm:absMMLORD-1}
Assume $T$ extends 
$\ZFC+\MM^{++}+$\emph{ There are class many Woodin cardinals}.
Then for every $\Pi_2$-formula $\phi(x)$ in the free variable $x$
and every parameter $p$ 
such that $T\vdash p\in H_{\omega_2}$ the following are equivalent: 
\begin{itemize}
\item
$T\vdash[H_{\aleph_2}\models\phi(p)]$ 
\item
$T\vdash$\emph{ There is a stationary set preserving partial order $P$ such that
$\Vdash_P \phi^{H_{\aleph_2}}(p)$ 
and $\Vdash_P\BMM$}.
\end{itemize}
\end{theorem}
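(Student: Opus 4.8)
The two formulations are interderivable in the standard way, so the plan is to prove the model-theoretic content of Theorem~\ref{thm:absMMLORD-0} and read off Theorem~\ref{thm:absMMLORD-1}: the implication from a $T$-provable $\Pi_2$-truth of $H_{\aleph_2}$ to the $T$-provable existence of a witnessing poset is supplied by the trivial forcing, using $T\vdash\MM^{++}\to\BMM$, while the reverse implication is just Theorem~\ref{thm:absMMLORD-0} applied inside an arbitrary $V\models T$. So fix $V\models\MM^{++}+$``there are class many Woodin cardinals'' and a stationary set preserving $P$ with $\Vdash_P\BMM$; note that $\BMM$ then holds both in $V$ (it follows from $\MM^{++}$) and in $V^P$, and that $P$, being stationary set preserving, preserves $\omega_1$, so $H_{\aleph_2}^V\subseteq H_{\aleph_2}^{V^P}$ and parameters from $H_{\aleph_2}^V$ are available on both sides. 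The goal is $H_{\aleph_2}^V\prec_{\Sigma_2}H_{\aleph_2}^{V^P}$.

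The soft half is the preservation of $\Sigma_2$-truths upwards, equivalently of $\Pi_2$-truths downwards. If $\forall x\exists y\,\psi(x,y,p)$ holds in $H_{\aleph_2}^{V^P}$, then for each $a\in H_{\aleph_2}^V$ the $\Sigma_1$-over-$H_{\aleph_2}$ sentence $\exists y\,\psi(a,y,p)$ holds in $V^P$, and since $P$ lies in the class to which $\BMM$ applies, Generalized Cohen Absoluteness (Lemma~\ref{Lem:GenCohAbs}) --- that is, $\BMM$ in $V$ --- reflects it into $H_{\aleph_2}^V$; as $a$ was arbitrary, $\forall x\exists y\,\psi(x,y,p)$ holds in $H_{\aleph_2}^V$. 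Running the same reflection inside $V^P$, using $V^P\models\BMM$, shows symmetrically that a $\Pi_2$-sentence true in $H_{\aleph_2}^V$ cannot acquire a counterexample in $H_{\aleph_2}^{V^P}$, since such a counterexample would be a $\Sigma_1$-fact reflecting back to $H_{\aleph_2}^V$.

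The hard half is the descent of a genuine $\Sigma_2$-truth $\exists x\forall y\,\psi(x,y,p)$ from $H_{\aleph_2}^{V^P}$ to $H_{\aleph_2}^V$; here the witness need not be old, and $\BMM$, which moves only $\Sigma_1$-facts, is insufficient. I would bring in the Woodin cardinals through the stationary tower. Fix a Woodin $\delta>|P|$, still Woodin in $V^P$, and force over $V$ with the stationary tower at $\delta$ below a condition absorbing $P$; this produces a $V$-generic $G$ for $P$ together with a generic elementary embedding that lifts to $j\colon V^P=V[G]\to M$ with $\crit(j)=\omega_1^V$, $j(\omega_1^V)=\delta$ and $M$ closed under $<\delta$-sequences, and with $j\restriction V$ an elementary embedding of $V$ whose range generates, inside $M$, a transitive model $N$ of $\MM^{++}$ elementarily equivalent to $V$. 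Since every element of $H_{\aleph_2}^V$ has hereditary cardinality $<\delta$ and $M$ is suitably closed, $H_{\aleph_2}^V\subseteq M$, and transporting the $\Sigma_2$-truth through $j$ and restricting its inner quantifier to range over $H_{\aleph_2}^V$ yields an $a^{*}\in M$ with $\psi(a^{*},y,j(p))$ for every $y\in H_{\aleph_2}^V$. The decisive point --- and the reason $\MM^{++}$ rather than $\MM$ is needed --- is that $\MM^{++}$ furnishes the iterability and genericity-correctness of the countable elementary submodels underlying these towers, which is what legitimises the comparison of $j\restriction V$ with the stationary tower of $V$ and forces $N$ to compute the $\Pi_2$-theory of its own $H_{\aleph_2}$ correctly; combined with the elementarity of $j\restriction V$ and the standard handling of the parameter $p$ (reducing it, via the $\Sigma_2$-definability features of $H_{\aleph_2}$ available under $\MM^{++}$, to a form on which $j$ acts transparently), this delivers the required witness in $H_{\aleph_2}^V$. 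The hypothesis $\Vdash_P\BMM$ enters exactly to make $V^P$ $\Sigma_1$-correct, which is the prerequisite for the tower analysis of $V^P$ to reflect its $\Sigma_2$-theory at all.

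The main obstacle I anticipate is this last step: certifying, through $\MM^{++}$-iterability, that the stationary-tower embedding of $V^P$ genuinely restricts to an iteration of $V$ along which the $\Sigma_2$-witness descends into $H_{\aleph_2}^V$, and keeping the parameter $p$ under control under $j$. This is also the step which, specialised to $H_{\aleph_1}$, $L(\mathbb{R})$ and Cohen's absoluteness in place of $H_{\aleph_2}$, $H_{\aleph_2}$ and $\BMM$, reproduces Woodin's absoluteness theorems --- the by-product promised in the abstract.
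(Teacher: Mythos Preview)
Your reduction of Theorem~\ref{thm:absMMLORD-1} to Theorem~\ref{thm:absMMLORD-0} via the trivial forcing is fine, and your argument for the downward preservation of $\Pi_2$-truths (equivalently, upward preservation of $\Sigma_2$-truths) is correct and in fact simpler than the paper's treatment of that direction: $\BMM$ in $V$, which follows from $\MM^{++}$, already gives $H_{\aleph_2}^V\prec_{\Sigma_1}V^P$ for any SSP $P$, and that is all one needs there.

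The genuine gap is the other direction, and neither of your two attempts at it works. The sentence ``running the same reflection inside $V^P$, using $V^P\models\BMM$'' is not symmetric: $\BMM$ in $V^P$ speaks about $H_{\aleph_2}^{V^P}$ versus further SSP extensions of $V^P$, not about $H_{\aleph_2}^V$ sitting inside $H_{\aleph_2}^{V^P}$; and a counterexample $a\in H_{\aleph_2}^{V^P}$ to a $\Pi_2$-truth of $V$ is a $\Pi_1$-fact about a possibly new parameter, so nothing reflects back. You seem to recognise this, since you then offer the ``hard half''---but that argument is off track. You force with a tower of critical point $\omega_1$, try to lift $j$ to $V^P$, and invoke ``iterability and genericity-correctness of countable elementary submodels'' and ``$\Sigma_2$-definability features'' of parameters. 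None of this is what $\MM^{++}$ provides; you are conflating it with $(*)$-style $\mathbb{P}_{\max}$ machinery.

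What you are missing is the paper's characterisation of $\MM^{++}$ (Theorem~\ref{thm:MM++}): for every SSP $P$ and every Woodin $\delta>|P|$ there is a complete embedding $i\colon P\to\mathbb{B}(\mathbb{R}^{\aleph_2}_\delta\restriction T)$ with $\Vdash_P\,(\mathbb{R}^{\aleph_2}_\delta\restriction T)/i[P]$ \emph{stationary set preserving}. This produces a sandwich $V\subset V[H]\subset V[G]$ where $H$ is $V$-generic for $P$, $G$ is $V$-generic for the tower, and $V[G]$ is an SSP extension of $V[H]$. Now the tower gives full elementarity $H_{\aleph_2}^V\prec H_{\aleph_2}^{V[G]}$ (critical point $\omega_2$, so parameters in $H_{\aleph_2}^V$ are fixed---no ``reduction'' of $p$ is needed), and the hypothesis $V[H]\models\BMM$ applied to the SSP quotient gives $H_{\aleph_2}^{V[H]}\prec_{\Sigma_1}H_{\aleph_2}^{V[G]}$. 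The hard direction then falls out in two lines: a $\Pi_2$-truth $\forall y\exists x\,\phi(x,y,p)$ of $H_{\aleph_2}^V$ transfers to $H_{\aleph_2}^{V[G]}$ by full elementarity; for each $q\in H_{\aleph_2}^{V[H]}$ the $\Sigma_1$-fact $\exists x\,\phi(x,q,p)$ then reflects from $H_{\aleph_2}^{V[G]}$ down to $H_{\aleph_2}^{V[H]}$ by $\BMM$ there. That is the whole argument; the role of $\MM^{++}$ is exactly to guarantee the SSP quotient, and the role of $\Vdash_P\BMM$ is exactly to run the $\Sigma_1$-reflection from $V[G]$ down to $V[H]$, not from $V[H]$ down to $V$.
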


The rest of this paper is organized as follows: Section~\ref{sec:prel} 
presents background material on forcing (Subsection~\ref{sec:comemb}), 
forcing axioms (Subsection~\ref{sec:forcax}), the stationary tower
forcing (Subsection~\ref{sec:stattow}),
the relation between the stationary tower forcing and forcing axioms
(Subsection~\ref{subsec:prel3}), and a new 
characterization of the forcing axiom $\MM^{++}$ in terms of
complete embeddings of stationary set preserving posets into stationary 
tower forcings (Subsection~\ref{subsec:prel4}).
Section~\ref{sec:mainth} presents the proof of the main result, while 
Section~\ref{sec:WooHC} gives a proof of the invariance of the theory of 
$H_{\aleph_1}$ with respect to set forcing in the presence of class many Woodin cardinals.
 We end the paper with some comments and open 
questions (Section~\ref{sec:comop}).

While the paper is meant to be as much self-contained as possible, 
we presume that familiarity with forcing axioms (in particular with Martin's maximum)
and with the stationary tower forcing 
are of valuable help for the reader.
A good reference for background material on  Martin's maximum
is~\cite[Chapter 37]{JEC03}. For the stationary tower forcing a reference text is~\cite{LAR04}.

\subsection{Notation and prerequisites}\label{subsec:not}
We adopt standard notation which is customary in the subject, our reference text is~\cite{JEC03}.

For models $(M,E)$ of $\ZFC$, we say that $(M,E)\prec_{\Sigma_n} (M',E')$ if
$M\subset M'$, $E=E'\cap M^2$ and for any $\Sigma_n$-formula $\phi(p)$ 
with $p\in M$, $(M,E)\models\phi(p)$ 
if and only if $(M',E')\models\phi(p)$. 
We usually write $M\prec_{\Sigma_n} M$ instead of  $(M,E)\prec_{\Sigma_n} (M',E')$ when
$E,E'$ is clear from the context. We let $(M,E)\prec (M',E')$ if  
$(M,E)\prec_{\Sigma_n} (M',E')$ for all $n$.

We let $\Ord$ denote the class of ordinals. For any cardinal $\kappa$ $P_\kappa X$ 
denote the subsets of $X$ of size less than $\kappa$.
Given $f:X\to Y$ and $A\subset X, B\subset Y$, $f[A]$ is the pointwise image of $A$ under $f$ and
$f^{-1}[B]$ is the preimage of $B$ under $f$.
A set $S$ is stationary if for all $f:P_{\omega}(\cup S)\to \cup S$ there is $X\in S$ such that
$f[X]\subseteq X$ (such an $X$ is called a closure point for $f$).
A set $C$ is a club subset of $S$ if it meets all stationary subsets of $S$ or, equivalently, if
it contains all the closure points in $S$ of some $f:P_{\omega}(\cup S)\to \cup S$.
Notice that $P_\kappa X$ is always stationary if $\kappa$ is a cardinal and $X$, $\kappa$ are 
both uncountable.

If $V$ is a transitive model of $\ZFC$ and $(P,\leq_P)\in V$ is a partial order with a top element
$1_P$, 
$V^P$ denotes the class of $P$-names, and $\dot{a}$ or $\tau$ denote an arbitrary element of 
$V^P$, if
$\check{a}\in V^P$ is the canonical name for a set $a$ in $V$ we drop 
the superscript and confuse
$\check{a}$ with $a$.
We also feel free to confuse the approach to forcing via boolean valued models
as done by Scott and others or via the forcing relation. Thus we shall write for example
$V^P\models\phi$ as an abbreviation for 
\[
V\models[1_P\Vdash \phi].
\]

If $M\in V$ is such that $(M,\in)$ is 
a model of a sufficient fragment of $\ZFC$ and 
$(P,\leq_P)$ in $M$ is a partial order, an $M$-generic filter for 
$P$ is a filter $G\subset P$ 
such that $G\cap A\cap M$ is non-empty for all maximal antichains $A\in M$ (notice that
if $M$ is non-transitive, $A\not\subseteq M$ is well possible). 
If $N$ is a \emph{transitive} model of a large enough fragment of
$\ZFC$, $P\in N$ and $G$ is an $N$-generic filter for $P$, let 
$\sigma_G:N\cap V^P\to N[G]$ denote 
the evaluation map induced by $G$ of the $P$-names in $N$.

We say that $(M,E)\prec_{\Sigma_n} (\dot{N},\dot{E})$ for some $\dot{N}\in V^P$
if 
\[
V^P\models \dot{E}\cap M^2=E
\] 
and for any $\Sigma_n$-formula $\phi(p)$ 
with $p\in M$, $(M,E)\models\phi(p)$ 
if and only if 
\[
V^P\models [(\dot{N},\dot{E})\models\phi(p)].
\] 
We will write $M\prec_{\Sigma_n} \dot{N}$ if
$(M,E)\prec_{\Sigma_n} (\dot{N},\dot{E})$ and $E,\dot{E}$ are clear from the context.

We shall also frequently refer to Woodin cardinals,
however for our purposes we won't need to recall the definition of a 
Woodin cardinal but just its effects on the  properties of the
stationary tower forcing. This is done in 
subsection~\ref{sec:stattow}.

\section{Preliminaries}\label{sec:prel} 

We shall briefly outline some general results on the theory of forcing
which we shall need for our exposition.
The reader may skip Subsections~\ref{sec:comemb},~\ref{sec:forcax},~\ref{sec:stattow}
and eventually refer back to them.

\subsection{Preliminaries \textsf{I}: complete embeddings and projections}\label{sec:comemb}

For a poset $Q$ and $q\in Q$, let $Q\restriction q$ denote the poset $Q$ restricted to conditions $r\in Q$ which are below $q$
 and $\mathbb{B}(Q)$ denote its boolean completion, i.e. the complete boolean algebra of regular open subsets of $Q$, so that
 $Q$ is naturally identified with a dense subset of $\mathbb{B}(Q)$.
 We say that: 
\begin{itemize}
\item
$P$ completely embeds into $Q$ if   
there is an homomorphism $i:P\to \mathbb{B}(Q)$ which preseves the order relation and maps maximal antichains of 
$P$ into maximal antichains of $\mathbb{B}(Q)$. With abuse of notation we shall call a complete embedding
of $P$ into $Q$  any such homomorphism $i:P\to \mathbb{B}(Q)$.
\item
$i:P\to \mathbb{B}(Q)$ is locally complete  if
for some $q\in Q$, $i:P\to \mathbb{B}(Q\restriction q)$ is a complete embedding
(with a slight abuse of the current terminology, we shall also call any locally complete embedding a 
\emph{regular} embedding).
\item
$P$ projects to $Q$ if there is an order preserving map $\pi:P\to Q$ whose image is dense in $Q$.
\end{itemize}

\begin{lemma}\label{lem:cige}
The following are equivalent:
\begin{enumerate}
\item \label{lem:cige-1}
$P$ completely embeds into $Q$,
\item\label{lem:cige-2}
for any $V$-generic filter $G$ for $Q$ 
there is in $V[G]$ a $V$-generic filter $H$ for $P$,
\item\label{lem:cige-3}
For some $p\in P$ there is a homomorphism 
$i:\mathbb{B}(P\restriction p)\to\mathbb{B}(Q)$
of complete boolean algebras.
\end{enumerate}
\end{lemma}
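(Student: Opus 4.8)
The plan is to establish the cycle (1) $\Rightarrow$ (3) $\Rightarrow$ (2) $\Rightarrow$ (1), working throughout with boolean completions via the standard dictionary: a filter $G$ is $V$-generic for $Q$ exactly when its upward closure $\hat G=\{b\in\mathbb{B}(Q):\exists q\in G\ q\le b\}$ is a $V$-generic ultrafilter on $\mathbb{B}(Q)$, in which case $G=\hat G\cap Q$ and $V[G]=V[\hat G]$; and likewise for $P$. Since the top element $1_P$ dominates all conditions we have $P\restriction 1_P=P$ and $\mathbb{B}(P\restriction 1_P)=\mathbb{B}(P)$, so item (3) instantiated at $p=1_P$ simply asserts that a complete homomorphism $\mathbb{B}(P)\to\mathbb{B}(Q)$ exists.

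For the implication (1) $\Rightarrow$ (3), one uses the classical fact that a complete embedding of posets $i:P\to\mathbb{B}(Q)$ extends uniquely to a homomorphism $\bar i:\mathbb{B}(P)\to\mathbb{B}(Q)$ of complete boolean algebras: since $P$ is dense in $\mathbb{B}(P)$ the only possible value is $\bar i(b)=\bigvee\{i(p):p\in P,\ p\le b\}$, and verifying that this preserves complements and arbitrary joins is exactly what the two clauses ``$i$ is order preserving'' and ``$i$ maps maximal antichains of $P$ to maximal antichains of $\mathbb{B}(Q)$'' provide (see \cite{JEC03}); then $p=1_P$ witnesses (3). For (3) $\Rightarrow$ (2), let $j:\mathbb{B}(P\restriction p)\to\mathbb{B}(Q)$ be a complete homomorphism and let $G$ be $V$-generic for $Q$; working in $V[G]$, the preimage $j^{-1}[\hat G]$ of the $V$-generic ultrafilter $\hat G$ is a $V$-generic ultrafilter on $\mathbb{B}(P\restriction p)=\mathbb{B}(P)\restriction\bar p$ --- it is a prime filter because $j$ is a homomorphism, and $V$-generic because completeness of $j$ gives $\bigvee j[A]=j(1)=1$, hence $j[A]$ predense, for every maximal antichain $A\in V$ of $\mathbb{B}(P\restriction p)$ --- and its upward closure is a $V$-generic ultrafilter on $\mathbb{B}(P)$, so intersecting with $P$ produces the required filter $H\in V[G]$.

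For (2) $\Rightarrow$ (1), let $\mathbb{D}\in V$ be the set of all dense subsets of $P$. Being a ground-model set, $\check{\mathbb{D}}$ evaluates to $\mathbb{D}$ in every generic extension, so ``$x$ is a $V$-generic filter for $\check P$'' is expressed by the first-order formula ``$x$ is a filter on $\check P$ and $x\cap d\neq\emptyset$ for every $d\in\check{\mathbb{D}}$''. By (2) this formula is forced to be satisfiable, so the maximum principle yields a single $Q$-name $\dot H$ with $1_Q\Vdash$``$\dot H$ is a $V$-generic filter for $\check P$''. Define $i:P\to\mathbb{B}(Q)$ by $i(p)=\|\check p\in\dot H\|_{\mathbb{B}(Q)}$; it is order preserving because $\dot H$ is forced upward closed, and if $A\in V$ is a maximal antichain of $P$ then its downward closure lies in $\mathbb{D}$ and hence $\dot H$ is forced to meet it, so (as $\dot H$ is a filter) $\dot H$ is forced to meet $A$, which gives $\bigvee i[A]=1$, while distinct members of $A$ are incompatible and so cannot both lie in the filter $\dot H$, which gives that $i[A]$ is an antichain. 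Thus $i$ is a complete embedding.

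The step I expect to be the real obstacle is (2) $\Rightarrow$ (1): one must first notice that genericity \emph{over $V$} becomes first-order once the ground-model parameter $\check{\mathbb{D}}$ is available, so that the maximum principle can be used to produce a single name $\dot H$. The remaining content is routine manipulation of boolean values, carried out under the harmless standing convention --- needed to keep the poset/algebra translation clean --- of assuming $P$ and $Q$ separative, or equivalently of reading ``$V$-generic filter for $P$'' as ``$V$-generic ultrafilter on $\mathbb{B}(P)$''.
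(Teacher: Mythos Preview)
Your proof is correct and uses the same two core ideas as the paper: the pullback $i^{-1}[G]$ to produce a $V$-generic filter for $P$, and the map $p\mapsto\|\check p\in\dot H\|_{\mathbb{B}(Q)}$ to produce a complete embedding from a forced generic. The only substantive difference is in the step $(1)\Rightarrow(3)$: you extend the given complete embedding $i:P\to\mathbb{B}(Q)$ directly to a complete homomorphism $\bar i:\mathbb{B}(P)\to\mathbb{B}(Q)$ via $\bar i(b)=\bigvee\{i(p):p\le b\}$ and take $p=1_P$, whereas the paper first locates a $p\in P$ below which $i$ never hits $0_{\mathbb{B}(Q)}$ (such $p$ exists since otherwise a maximal antichain inside $\{q:i(q)=0\}$ would map to $\{0\}$) and restricts to $\mathbb{B}(P\restriction p)$. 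Your route is slightly cleaner, since item~(3) only asks for a \emph{homomorphism} of complete boolean algebras, not an injective one; the paper's restriction buys injectivity of the extension, which is more than is needed here but is consistent with the Remark following the lemma about complete embeddings that send part of $P$ to $0$. Your cycle $(1)\Rightarrow(3)\Rightarrow(2)\Rightarrow(1)$ versus the paper's pair of equivalences is purely organizational.
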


\begin{proof}
We proceed as follows:
\begin{description}
\item
\textbf{\ref{lem:cige-1} implies~\ref{lem:cige-2}}

Observe that if 
$i:P\to \mathbb{B}(Q)$ is a complete embedding and $G$ is
a $V$-generic filter for $\mathbb{B}(Q)$, then
$H=i^{-1}[G]$ is a $V$-generic filter for $P$.

\item
\textbf{\ref{lem:cige-2} implies~\ref{lem:cige-1}}

Let
$\dot{H}\in V^{\mathbb{B}(Q)}$ be a name such that
\[
\Vdash_{\mathbb{B}(Q)}\dot{H}\text{ is a $V$-generic filter for }P.
\] 
The map $p\mapsto \|\check{p}\in\dot{H}\|_{\mathbb{B}(Q)}$ 
is the desired complete embedding of $P$ into $Q$.

\item
\textbf{\ref{lem:cige-1} implies~\ref{lem:cige-3}}

Let
$i:P\to \mathbb{B}(Q)$ be a complete embedding
and $\dot{H}\in V^{\mathbb{B}(Q)}$ be a name for the
$V$-generic filter for $\mathbb{B}(Q)$.
Then there is some $p\in P$ such that 
\[
\|i(q)\in\dot{H}\|_{\mathbb{B}(Q)}>0_{\mathbb{B}(Q)}
\]
for all $q\leq p$.
Then for such a $p$ the map $i$ can naturally be extended to a complete homomorphism 
$i:\mathbb{B}(P\restriction p)\to\mathbb{B}(Q)$.

\item
\textbf{\ref{lem:cige-3} implies~\ref{lem:cige-1}}

Immediate.
\end{description}
\end{proof}

\begin{remark}
Observe that if $i:P\to \mathbb{B}(Q)$ is a complete embedding then for all
$q\in Q$ such that $i(p)\wedge q> 0_{\mathbb{B}}$, 
the map $i_q:P\to \mathbb{B}(Q\restriction q)$ which maps $p$ to $q\wedge i(p)$ is also a complete 
embedding.
Moreover if $q\Vdash_Q \check{p}\in\dot{H}$ where $\dot{H}=i^{-1}[\dot{G}]\in V^{Q}$ and 
$\dot{G}$ is the canonical $\mathbb{B}(Q)$-name for 
a $V$-generic filter for $\mathbb{B}(Q)$, we have that 
$i_q(r)=0_{\mathbb{B}(Q)}$ for all $r\in P$ incompatible with $p$.

Thus in general a complete embedding $i:P\to \mathbb{B}(Q)$ may map a large portion of
$P$ to $0_{\mathbb{B}(Q)}$. 
\end{remark}

\begin{lemma} \label{lem:prge}
The following are equivalent:
\begin{enumerate}
\item \label{lem:prge-1}
There  is a projection $\pi:P\to \mathbb{B}(Q)\setminus\{0_{\mathbb{B}(Q)}\}$.
\item  \label{lem:prge-2}
There is $\dot{H}$ in $V^{P}$ such that
$\Vdash_P\dot{H}$\emph{ is a $V$ generic filter for $Q$}.
\end{enumerate}
\end{lemma}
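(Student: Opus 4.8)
The plan is to follow the template of the proof of Lemma~\ref{lem:cige}: establish the two implications separately, in each case passing through the dictionary between complete embeddings of boolean completions and $P$-names for $V$-generic filters that Lemma~\ref{lem:cige} already provides.

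\textbf{\ref{lem:prge-1}$\Rightarrow$\ref{lem:prge-2}.} Given a projection $\pi\colon P\to\mathbb{B}(Q)\setminus\{0_{\mathbb{B}(Q)}\}$, I would let $\dot H$ be the canonical $P$-name whose evaluation by a $V$-generic filter $G$ is the upward closure $H=\{q\in\mathbb{B}(Q):\exists p\in G\ \pi(p)\leq q\}$ of $\pi[G]$, and show $\Vdash_P$ ``$\dot H$ is a $V$-generic filter for $\mathbb{B}(Q)$'' --- equivalently for $Q$, since $Q$ is dense in $\mathbb{B}(Q)$. That $H$ is a filter is immediate: it is upward closed by construction, $0\notin H$ because $\pi$ avoids $0$, and it is directed because if $q_1,q_2\in H$ are witnessed by $p_1,p_2\in G$ then any $p\in G$ below both has $0\neq\pi(p)\leq q_1\wedge q_2$. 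For genericity I would fix a maximal antichain $A\in V$ of $\mathbb{B}(Q)$, set $D=\{p\in P:\exists a\in A\ \pi(p)\leq a\}\in V$, and check that $D$ is dense in $P$: given $p_0$, some $a\in A$ is compatible with $\pi(p_0)$ by maximality, and the projection property yields $p\leq_P p_0$ with $\pi(p)\leq a$; then $G$ meets $D$ and any such witnessing $p$ puts the corresponding $a$ into $H\cap A$.

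\textbf{\ref{lem:prge-2}$\Rightarrow$\ref{lem:prge-1}.} If $\dot H\in V^P$ is forced to be a $V$-generic filter for $Q$, let $\dot H'$ name the filter it generates on $\mathbb{B}(Q)$; then every $V$-generic extension by $P$ (equivalently by $\mathbb{B}(P)$) contains a $V$-generic filter for $\mathbb{B}(Q)$, so by Lemma~\ref{lem:cige} (the implication \ref{lem:cige-2}$\Rightarrow$\ref{lem:cige-1}, applied with $\mathbb{B}(Q)$ and $\mathbb{B}(P)$ in the roles of $P$ and $Q$) the map $e(q)=\|\check q\in\dot H'\|_{\mathbb{B}(P)}$ is a complete embedding of $\mathbb{B}(Q)$ into $\mathbb{B}(P)$. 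I would then define
\[
\pi(p)=\bigwedge\{q\in\mathbb{B}(Q):\hat p\leq_{\mathbb{B}(P)}e(q)\}
\]
($\hat p$ denoting the image of $p$ in $\mathbb{B}(P)$), which makes sense because $\mathbb{B}(Q)$ is complete and $\hat p\leq 1=e(1_{\mathbb{B}(Q)})$. Since $e$ preserves meets, $\hat p\leq e(\pi(p))$, so $\pi(p)\neq 0_{\mathbb{B}(Q)}$ (otherwise $\hat p\leq e(0)=0$); $\pi$ is order preserving because $p_1\leq_P p_2$ makes the meet defining $\pi(p_1)$ range over a superset of the one defining $\pi(p_2)$; and $\pi[P]$ is dense in $\mathbb{B}(Q)\setminus\{0_{\mathbb{B}(Q)}\}$ because for $q_0\neq 0$ we have $e(q_0)\neq 0$ by injectivity, hence some $p\in P$ has $0<\hat p\leq e(q_0)$ (as $P$ is dense in $\mathbb{B}(P)$), and then $\pi(p)\leq q_0$.

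Most of this is routine filter and density bookkeeping. The one genuinely structural point is the density of $D$ in the first direction, which is exactly where one uses that $\pi$ is a projection rather than merely an order-preserving map with dense image; the only non-obvious construction is the ``inverse'' meet $\pi(p)=\bigwedge\{q:\hat p\leq e(q)\}$ in the second direction, whose legitimacy rests on the completeness of $\mathbb{B}(Q)$. I do not anticipate any real difficulty beyond handling these two points with care.
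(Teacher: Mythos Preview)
Your strategy mirrors the paper's, with one genuine difference in the harder direction \ref{lem:prge-2}$\Rightarrow$\ref{lem:prge-1}. The paper devotes most of its proof to a direct combinatorial Fact (if $A\in V$ is a subset of a $V$-generic filter $H$ for $Q$, then $\bigwedge A>0_{\mathbb{B}(Q)}$), from which $\pi(p)>0$ follows; you instead invoke Lemma~\ref{lem:cige} to produce the embedding $e$ and read off $\pi(p)>0$ from $\hat p\leq e(\pi(p))$. Since $\hat p\leq e(q)$ unpacks to $p\Vdash q\in\dot H'$, the two constructions yield the same $\pi$; your argument is more structural, the paper's more self-contained. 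One caveat: ``$e$ preserves meets'' is not a consequence of being a complete embedding in the paper's sense; you need either part~\ref{lem:cige-3} of Lemma~\ref{lem:cige} or the direct observation that a $V$-generic ultrafilter is closed under meets of families in $V$ --- which is essentially the content of the paper's Fact.

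There is a genuine gap in your density argument: the claim $e(q_0)\neq 0$ ``by injectivity'' is unjustified and can fail. If $P=\mathbb{B}(Q)\restriction\neg q_0$ for some $0<q_0<1$ and $\dot H'$ names the upward closure in $\mathbb{B}(Q)$ of the canonical $P$-generic, then $\dot H'$ is forced to be $V$-generic for $\mathbb{B}(Q)$, yet $e(q_0)=0$ and the resulting $\pi$ has image inside $\mathbb{B}(Q)\restriction\neg q_0$, hence not dense below $q_0$. The paper glosses over the same point (``it is now easy to check that $\pi[P]$ is a dense subset''), so this is a shared lacuna rather than a divergence from the paper. Finally, in \ref{lem:prge-1}$\Rightarrow$\ref{lem:prge-2} you rely on a refinement property (``the projection property yields $p\leq_P p_0$ with $\pi(p)\leq a$'') which is the standard notion of projection but is \emph{not} part of the paper's stated definition (order-preserving with dense image); your closing remark already flags this distinction, and indeed without refinement the forward direction does not go through either.
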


\begin{proof}
\begin{description}
\item
\textbf{\ref{lem:prge-1} implies~\ref{lem:prge-2}}

Let $\dot{H}\in V^P$ be a $P$-name such that $\Vdash_P\dot{H}=\pi[\dot{G}]$.
Then since the image of $\pi$ is a dense subset of $\mathbb{B}(Q)$ it is easy to check that
$\Vdash_P\dot{H}$\emph{ generates a $V$-generic filter for $\mathbb{B}(Q)$}.
\item
\textbf{\ref{lem:prge-2} implies~\ref{lem:prge-1}}
Assume~\ref{lem:prge-2} holds for the $P$-name $\dot{H}$ and let
$\pi:P\to\mathbb{B}(Q)$ be defined by $\pi(p)=\bigwedge\{q\in Q: p\Vdash_P q\in\dot{H}\}$.
We claim that $\pi$ is a projection. First of all we claim that
$\pi(p)>0_{\mathbb{B}(Q)}$ for all $p\in P$.
This uses the following observation:
\begin{fact}
Assume $G$ is $V$-generic for $P$ and 
$H=\sigma_G(\dot{H})\in V[G]$ is $V$-generic for $Q$.
If $A\in V$ is such that $A\subset H$, then $\bigwedge A>0_{\mathbb{B}(Q)}$.
\end{fact}
\begin{proof}
Assume not, then there is some $r\in G$ such that $r\Vdash_P \bigwedge A=0_{\mathbb{B}(Q)}$.
Now let $A=\{a_i:i\in I\}$. Since $A\subset G$ all the $a_i$ are compatible.
Let $B\subset A$ $B\in V$ be a non-empty subset of $A$ of 
least size such that $\bigwedge B=0_{\mathbb{B}(Q)}$ but for no
$E\subset B$ such that $|E|<|B|$, $\bigwedge E=0_{\mathbb{B}(Q)}$.
Then we can arrange that $B=\{a_\alpha:\alpha<\gamma\}$ for some cardinal $\gamma$ and that
$b_\beta=\bigwedge\{a_\alpha:\alpha<\beta\}>0_{\mathbb{B}(Q)}$ for all $\beta<\gamma$.
By refining
the sequence $\{b_\beta:\beta<\gamma\}$ (if necessary) we can further suppose that $b_\alpha<b_\beta$ for
all $\alpha<\beta$.

Now let $c_\beta=b_0\wedge \neg b_\beta$.
We claim that 
\[
C=\{c:\exists \beta<\gamma\text{ such that } c\leq c_\beta\}
\] 
(which belongs to $V$) 
is a dense subset of
$\mathbb{B}(Q)\restriction b_0$. 
To see this, let $\dot{H}_0$ be a $P$-name which is forced by $P$
to be
the $V$-generic filter for $\mathbb{B}(Q)$ generated by $\dot{H}$
which is a $P$-name for a $V$-generic filter for $Q$.
 Observe that since
$r\Vdash_P \dot{H}_0$\emph{ is a $V$-generic filter for 
$\mathbb{B}(Q)$ containing $\dot{H}$}, 
we get that $r\Vdash_P b_\beta\in\dot{H}_0$ for all $\beta<\gamma$. 
Now given some $c\leq b_0$,  
we have that $\bigwedge\{c\wedge b_\beta:\beta<\gamma\}=0_{\mathbb{B}(Q)}$.
There are two cases:
\begin{itemize}
\item 
There is some $\beta<\gamma$ such that 
$c\wedge b_\delta=c\wedge b_\beta$ for all
$\delta\in[\beta,\gamma)$. 

In this case we have that $c\wedge b_\beta=0_{\mathbb{B}(Q)}$
and $c$ is already an element of $C$.

\item For all $\beta<\gamma$ there is $\alpha_\beta<\gamma$ such that 
$c\wedge b_{\alpha_\beta}< c\wedge b_\beta$.

In this case we get that  $c\wedge b_{\alpha_0}< c$. 
Thus $d=c\wedge\neg  b_{\alpha_0}>0_{\mathbb{B}(Q)}$ and
$d\leq c$ is an element of $C$. 

\end{itemize}
Thus $C$ is dense.
Since $r\Vdash_P C\cap\dot{H}_0\neq\emptyset$ we can find $r'\leq r$ and $d\in C$ 
such that $r'\Vdash_P d\in C\cap\dot{H}_0$. Now there is some $\beta$ such that 
$d\wedge b_\beta=0_{\mathbb{B}(Q)}$.
Then $r'\Vdash_P d\wedge b_\beta=0_{\mathbb{B}(Q)}\in\dot{H}_0$, 
which is the desired contradiction which proves 
the fact. 

\end{proof}
Now for all $p\in P$, $A_p=\{q\in Q:p\Vdash q\in\dot{H}\}$ is in $V$ 
and is forced by $p$ to be a subset of
$\dot{H}$. In particular we get that $\bigwedge A_p=\pi(p)>0_{\mathbb{B}(Q)}$.
iI is now easy to check that $\pi[P]$ is a dense subset of 
$\mathbb{B}(Q)\setminus \{0_{\mathbb{B}(Q)}\}$.  
\end{description}
\end{proof}

Given  
a complete embedding $i:\mathbb{Q}\to\mathbb{B}$ of complete boolean algebras,
let $\pi:\mathbb{B}\to\mathbb{Q}$ map
$a$ to $\inf\{q\in\mathbb{Q}: i(q)\geq a\}$, then $\pi$ is a projection and
$\pi\circ i(b)=b$ for all $b\in \mathbb{B}$ while $i\circ \pi(q)\geq q$ for all $q\in \mathbb{Q}$. 

The quotient forcing $\mathbb{B}/i[\mathbb{Q}]$ is the object belonging to
$V^{\mathbb{Q}}$ such that
\begin{itemize}
\item
$\Vdash_{\mathbb{Q}}\mathbb{B}/i[\mathbb{Q}]$\emph{ is a partial order 
with the order relation $\leq_i$.}
\item
$\mathbb{B}/i[\mathbb{Q}]\in V^{\mathbb{Q}}$ is the set of
$\mathbb{Q}$-names $\dot{r}$ of least rank among those that satisfy the following requirements:
\begin{itemize}
\item
$\llbracket\dot{r}\in(\check{\mathbb{B}}\setminus\{0_{\mathbb{B}}\})\rrbracket_{\mathbb{Q}}
=1_{\mathbb{Q}}$.
\item
For all $\dot{r}\in\mathbb{B}/i[\mathbb{Q}]$ if there are $r\in\mathbb{B}$ and $q\in\mathbb{Q}$
such that $q\Vdash_{\mathbb{Q}}\dot{r}=r$, then $\pi(r)\geq q$.
\end{itemize}
\item
For $\dot{r},\dot{s}\in \mathbb{B}/i[\mathbb{Q}]$
$q\Vdash_{\mathbb{Q}}\dot{r}\leq_i \dot{s}$ if and only if the following holds:

\begin{quote}
For all $q'\leq q$, if there are $r,s\in\mathbb{B}$ such that
$q'\Vdash_{\mathbb{Q}}\dot{r}=r\wedge\dot{s}=s$, then 
$r\wedge i(q')\leq_{\mathbb{B}}s\wedge i(q')$.
\end{quote}

\end{itemize}

\begin{lemma}
If $i:\mathbb{Q}\to\mathbb{B}$ is a complete embedding of 
complete boolean algebras, then $\mathbb{Q}*(\mathbb{B}/i[\mathbb{Q}])$ is forcing equivalent
to $\mathbb{B}$.
\end{lemma}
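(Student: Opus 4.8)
The plan is to exhibit an explicit dense embedding of $\mathbb{B}$ into (the boolean completion of) $\mathbb{Q}*(\mathbb{B}/i[\mathbb{Q}])$, or rather to check that a $V$-generic filter for one canonically produces a $V$-generic filter for the other with the same generic extension. First I would work in the forcing-theoretic rather than the purely combinatorial picture: let $G$ be $V$-generic for $\mathbb{B}$, set $g = \pi[G] = \{q\in\mathbb{Q} : i(q)\in G\}$ (equivalently $i^{-1}[G]$ since $\pi$ restricted to $i[\mathbb{Q}]$ inverts $i$), and note that by Lemma~\ref{lem:cige} applied to the complete embedding $i$, $g$ is a $V$-generic filter for $\mathbb{Q}$. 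So $V[G]\supseteq V[g]$, and it remains to see that in $V[g]$ the filter $G$ itself is $V[g]$-generic for the quotient partial order $(\mathbb{B}/i[\mathbb{Q}])^g$, i.e. for $\mathbb{B}$ ordered by $\leq_i$ relativized to $g$; this is exactly the standard "factoring a generic through a complete subalgebra" argument, using that by the definition of $\leq_i$ the quotient order agrees with $\leq_{\mathbb{B}}$ on the relevant conditions once we pass below elements of $i[g]$, and that maximal antichains of the quotient in $V[g]$ come from maximal antichains of $\mathbb{B}$ in $V$ by a name-reflection argument. Conversely, if $h*G'$ is $V$-generic for $\mathbb{Q}*(\mathbb{B}/i[\mathbb{Q}])$, then $G'$ (reinterpreted in $V$ via the name structure) generates a $V$-generic filter for $\mathbb{B}$, and one checks $V[h*G'] = V[G']$ because $h$ is recovered as $i^{-1}[G']$, the embedding $i$ being complete.

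Carrying this out, the key steps in order are: (1) verify $\pi\circ i = \id$ and $i\circ\pi \geq \id$ as stated just before the lemma, so that $i^{-1}[G]$ and $\pi[G]$ coincide and form a generic filter $g$ for $\mathbb{Q}$; (2) unwind the definition of the name $\mathbb{B}/i[\mathbb{Q}]$ and of $\leq_i$ to see that, in $V[g]$, the evaluated partial order is (a dense subset of, or all of) $\mathbb{B}\setminus\{0_{\mathbb{B}}\}$ with the order given by $r\leq_i s$ iff $r\wedge b \leq_{\mathbb{B}} s\wedge b$ for all $b\in i[g]$ below the relevant condition — essentially the order of $\mathbb{B}$ restricted compatibly with $g$; (3) show $G$ is $V[g]$-generic for this order, which reduces to: every dense $D\in V[g]$ of the quotient contains, on a dense set of conditions $q\in g$, the evaluation of a $\mathbb{Q}$-name $\dot D$ whose evaluation is forced dense, and then the maximal-antichain-preservation clause in the definition of complete embedding $i$ lets one find a maximal antichain of $\mathbb{B}$ in $V$ refining $\dot D$, which $G$ meets; (4) assemble $G\mapsto (g,G)$ and $(h,G')\mapsto G'$ as mutually inverse and generic-preserving, and invoke the standard fact that two posets are forcing equivalent iff there is such a correspondence of generics giving the same extensions (or directly: the map $b\mapsto (\pi(b), \dot b)$ densely embeds $\mathbb{B}$ into $\mathbb{B}(\mathbb{Q}*(\mathbb{B}/i[\mathbb{Q}]))$).

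The main obstacle I expect is step (2)–(3): making rigorous the identification of the quotient poset in the generic extension and verifying genericity, because the name $\mathbb{B}/i[\mathbb{Q}]$ is defined by a least-rank condition with a side requirement ($\pi(r)\geq q$ whenever $q\Vdash \dot r = r$) that is designed precisely to make the evaluation behave, and one must track how $\leq_i$ interacts with $g$ carefully — in particular that two quotient names $\dot r, \dot s$ with $q\Vdash \dot r = r, \dot s = s$ and $q\in g$ satisfy $\dot r \leq_i \dot s$ in $V[g]$ iff $r\wedge i(q) \leq_{\mathbb{B}} s\wedge i(q)$, so that $\leq_i$ really is (below $i[g]$) the order of $\mathbb{B}$. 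Once that bookkeeping is done, the genericity of $G$ for the quotient over $V[g]$ and the recovery of $g = i^{-1}[G]$ from $G$ are routine consequences of $i$ being a complete embedding, and the forcing equivalence follows. I would also remark that this is the expected statement — it is the boolean-algebraic incarnation of iterated forcing $\mathbb{Q}*\dot{\mathbb{R}} \cong \mathbb{B}$ whenever $\mathbb{Q}$ regularly embeds into $\mathbb{B}$ — so no surprises are anticipated beyond the definitional unwinding.
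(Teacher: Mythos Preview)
Your proposal is correct, and in fact you already state the paper's proof as a parenthetical alternative at the very end of step (4): the paper simply writes down the map $i^*:\mathbb{B}\setminus\{0_{\mathbb{B}}\}\to\mathbb{B}(\mathbb{Q}*(\mathbb{B}/i[\mathbb{Q}]))$, $r\mapsto(\pi(r),\check{r})$, asserts it is a dense embedding, and stops. All of your steps (1)--(3) --- recovering $g=i^{-1}[G]$, unwinding the quotient order $\leq_i$ in $V[g]$, and verifying genericity of $G$ over $V[g]$ --- are the semantic content that \emph{justifies} why $i^*$ has dense image and preserves incompatibility, but the paper does not spell any of it out. Your forcing-theoretic route via correspondence of generics is the standard textbook argument and has the advantage of being self-contained and making the equality of generic extensions explicit; the paper's route is shorter but leaves the verification that $i^*$ is actually a dense embedding (in particular that $(\pi(r),\check{r})$ is a legitimate condition, i.e.\ that $\check{r}$ meets the rank-and-$\pi$ side condition in the definition of $\mathbb{B}/i[\mathbb{Q}]$, and that the image is dense) entirely to the reader. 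Since you already identified the direct map, you could lead with it and relegate the generic-filter bookkeeping to a remark.
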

\begin{proof}
Let $\pi:\mathbb{B}\to\mathbb{Q}$ be the projection map associated to $i$.
The map 
\[
i^*:(\mathbb{B}\setminus\{0_{\mathbb{B}}\})\to\mathbb{B}(\mathbb{Q}*(\mathbb{B}/i[\mathbb{Q}]))
\]
which maps $r\mapsto(\pi(r),\check{r})$ is a complete embedding such that 
$i^*[\mathbb{B}\setminus\{0_{\mathbb{B}}\}]$ is 
dense in $\mathbb{Q}*(\mathbb{B}/i(\mathbb{Q}))$.

The conclusion follows.
\end{proof}

\begin{remark}
There might be a variety of locally complete embeddings of a poset $P$ into a poset $Q$.
These embeddings greatly affect the properties the generic extensions by $Q$
attributes to elements of the generic extensions by $P$.
For example the following can occur: 
\begin{quote}
There is a $P$-name $\dot{S}$ which is forced by $P$ to be
a stationary subset of $\omega_1$ and there are $i_0:P\to\mathbb{B}(Q)$,
$i_1:P\to\mathbb{B}(Q)$ distinct locally complete embeddings of $P$ into $Q$ such that
if $G$ is $V$-generic for $\mathbb{B}(Q)$ and $H_j=i_j^{-1}[G]$, then 
$\sigma_{H_0}(\dot{S})$ is stationary in $V[G]$, $\sigma_{H_1}(\dot{S})$ is stationary
in $V[H_1]$ but non-stationary in $V[G]$.
\end{quote}
\end{remark}

If $i:P\to \mathbb{B}(Q)$ is a locally complete embedding and $p\in P$, $q\in Q$ are such that
$i$ can be extended to a complete homomorphism of $\mathbb{B}(P\restriction p)$ into
$\mathbb{B}(Q\restriction q)$ we shall also
denote $\mathbb{B}(Q\restriction q)/i[\mathbb{B}(P\restriction p)]$ by $Q/i[P]$, if $i$ is clear from the
context we shall even denote such quotient forcing as $Q/P$.

\subsection{Preliminaries \textsf{II}: forcing axioms}\label{sec:forcax}

\begin{definition}
Given a cardinal $\lambda$ and a partial order $P$,
$\FA_{\lambda}(P)$ holds if:
\begin{quote}
For all $p\in P$, $P\restriction p$ is a partial order such that for every collection of 
$\lambda$-many dense subsets of $P\restriction p$ there is a filter $G\subset P\restriction p$ 
meeting all the dense set in this collection.
\end{quote}

$\FA_{<\lambda}(P)$ holds if $\FA_{\nu}(P)$ holds for all $\nu<\lambda$.

$\BFA_\lambda(P)$ holds if $H_\lambda\prec_{\Sigma_1} V^P$.

If $\Gamma$ is a family of partial orders, $\FA_\lambda(\Gamma)$ ($\FA_{<\lambda}(\Gamma)$,
$\BFA(\Gamma)$)
asserts that
$\FA_\lambda(P)$ ($\FA_{<\lambda}(P)$, $\BFA(P)$)
holds for all $P\in\Gamma$.


\smallskip

For any partial order $P$
\[
S^\lambda_{P}=\{M\prec H_{|P|^+}:M\cap\lambda\in\lambda>|M|\text{ and there is an 
$M$-generic filter for $P$}\} 
\] 

\end{definition}

For any regular uncountable cardinal $\lambda$,
we let $\Gamma_\lambda$ be the family of $P$ such that $S^\lambda_P$ is stationary.

In the introduction we already showed:
\begin{lemma}
Assume $\lambda$ is a regular cardinal. Then
$P\in \Gamma_\lambda$ implies $\BFA_\lambda(P)$.
\end{lemma}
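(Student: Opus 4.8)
The plan is to recall that $\BFA_\lambda(P)$ is by definition the assertion $H_\lambda \prec_{\Sigma_1} V^P$, so it suffices to show that membership of $P$ in $\Gamma_\lambda$ — i.e. stationarity of $S^\lambda_P$ — yields exactly this $\Sigma_1$-elementarity. The argument is the one already sketched in the introduction for $\FA_{\aleph_1}(P)$, now carried out at the regular cardinal $\lambda$. First I would note one direction is trivial: $H_\lambda \models \psi$ for a $\Sigma_1$-formula $\psi$ (with parameters in $H_\lambda$) is upward absolute to $V^P$, since $H_\lambda^V \subseteq H_\lambda^{V^P}$ when $\lambda$ stays a cardinal, and in any case a $\Sigma_1$-witness persists. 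The content is the downward direction: if $V^P \models [H_\lambda \models \exists x\, \phi(x,p)]$ for $\phi$ a $\Sigma_0$-formula and $p \in H_\lambda$, then $H_\lambda^V \models \exists x\, \phi(x,p)$.

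For this, fix a $P$-name $\dot a$ and a condition $p_0 \in P$ forcing that $\dot a \in H_\lambda$ and $\phi(\dot a, p)$ holds in $H_\lambda^{V^P}$. Using that $S^\lambda_P$ is stationary in $P_\lambda(H_{|P|^+})$, I would choose $M \prec H_\theta$ for $\theta$ large enough (so that $H_{|P|^+}, P, p, \dot a, p_0 \in M$), with $M \cap \lambda \in \lambda$, $|M| < \lambda$, $|M| = |M \cap \lambda|$, and such that there is an $M$-generic filter $G \subseteq P$ with $p_0 \in G$ — the last clause coming from the definition of $S^\lambda_P$ together with a density/genericity argument (or, as in the introduction, from applying Lemma~\ref{lem:WOOFA} / the characterization $\Gamma_{\nu^+} = \{P : \FA_\nu(P)\}$ when $\lambda = \nu^+$, which lets us meet the $|M|$-many dense sets coded in $M$). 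Let $\pi_M : M \to N$ be the transitive collapse, $\bar P = \pi_M(P)$, $\bar G = \pi_M[G]$, $\bar a = \pi_M(\dot a)$, $\bar p_0 = \pi_M(p_0)$; since $p, p_0$ and the relevant parameters lie in $H_\lambda \cap M$ and $M \cap \lambda$ is transitive, $\pi_M$ fixes them. Then $N$ is transitive, $\bar G$ is $N$-generic for $\bar P$, and by elementarity $N \models [\bar p_0 \Vdash_{\bar P} \bar a \in H_{\bar\lambda} \wedge \phi(\bar a, p)^{H_{\bar\lambda}}]$, so the forcing theorem applied in $V$ to the transitive model $N$ gives $b := \sigma_{\bar G}(\bar a) \in N[\bar G]$ with $N[\bar G] \models b \in H_{\bar\lambda} \wedge \phi(b,p)$. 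Absoluteness of the $\Sigma_0$-formula $\phi$ between the transitive set $N[\bar G]$ and $V$ (note $b \in H_{\bar\lambda}^{N[\bar G]}$, and $\bar\lambda = \pi_M(\lambda) = M \cap \lambda < \lambda$, so $b \in H_\lambda^V$) then yields $H_\lambda^V \models \phi(b,p)$, hence $H_\lambda^V \models \exists x\, \phi(x,p)$, as desired.

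The main obstacle is the genericity clause: extracting from mere stationarity of $S^\lambda_P$ an $M$ that simultaneously lies in $S^\lambda_P$ \emph{and} contains the specified parameters \emph{and} admits an $M$-generic filter through the prescribed condition $p_0$. One must either fold $p_0$ into the statement being reflected (work below $p_0$, i.e. with $P \restriction p_0$ in place of $P$), or observe that the set of $M$ containing $p_0$ with an $M$-generic filter containing $p_0$ is still stationary — which follows since $\{M : p_0 \in M\}$ is club and for $M$ with an $M$-generic $G$ one can translate $G$ to pass through $p_0$ by a density argument inside $M$. I expect this bookkeeping, rather than any deep idea, to be the only delicate point; everything else is the Cohen-absoluteness template of Lemma~\ref{Lem:CohAbs} transported verbatim to level $\lambda$.
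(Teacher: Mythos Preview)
Your proposal is correct and follows the same Cohen-absoluteness template that the paper uses (the paper simply refers back to the proof of Lemma~\ref{Lem:CohAbs} in the introduction and observes that stationarity of $S^\lambda_P$ is all that was needed there). One simplification worth noting: in the paper's convention $V^P\models\psi$ abbreviates $1_P\Vdash\psi$, so the condition $p_0$ you introduce can be taken to be $1_P$, and the bookkeeping about steering the $M$-generic filter through $p_0$ --- which you flag as the main obstacle --- evaporates; any $M$-generic filter for $\pi_M(P)$ does the job.
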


$\MM$ asserts that $\FA_{\aleph_1}(\mathsf{SSP})$ holds, 
where $\mathsf{SSP}$ is the family 
of posets which preserve stationary subsets of $\omega_1$.
$\BMM$ asserts that $\BFA_{\aleph_1}(\mathsf{SSP})$ holds.
It is not hard to see that if $S^\lambda_{P}$ is stationary, then $\FA_{<\lambda}(P)$ holds.
It is not clear whether the converse holds if $\lambda$ is inaccessible. 
However the converse holds if $\lambda$ is a successor cardinal
and Woodin's~\cite[Theorem 2.53]{woodin}
gives a special case of the following Lemma for $\lambda=\omega_2$.
\begin{lemma}\label{lem:WOOFA}

Let $\lambda=\nu^+$ be a
successor cardinal. 
Then the following are equivalent: 
\begin{enumerate}
\item
$\FA_\nu(P)$ holds.
\item
$S_{P}^\lambda$ is stationary.
  
\end{enumerate}

\end{lemma}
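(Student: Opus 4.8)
The implication $(2)\Rightarrow(1)$ is the easy direction and is already asserted in the text (stationarity of $S^\lambda_P$ yields $\FA_{<\lambda}(P)$, hence in particular $\FA_\nu(P)$): given a list $\langle D_\alpha:\alpha<\nu\rangle$ of dense subsets of $P\restriction p$, pick by stationarity some $M\prec H_{|P|^+}$ with $|M|<\lambda$, $M\cap\lambda\in\lambda$, containing $P$, $p$ and the sequence $\langle D_\alpha:\alpha<\nu\rangle$; since $\lambda=\nu^+$ and $M\cap\lambda\in\lambda$ forces $\nu\subseteq M$, all the $D_\alpha$ are elements of $M$, and an $M$-generic filter meets each of them. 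So the content is $(1)\Rightarrow(2)$.

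For $(1)\Rightarrow(2)$ the plan is the standard ``master condition'' argument. Fix $f:P_\omega(H_{|P|^+})\to H_{|P|^+}$; I must produce $M\prec H_{|P|^+}$ closed under $f$, with $|M|=\nu$, $M\cap\lambda\in\lambda$, $P\in M$, and carrying an $M$-generic filter. First build a continuous increasing chain $\langle M_\xi:\xi<\nu\rangle$ of elementary submodels of $H_\theta$ for suitably large $\theta$, each of size $<\lambda$ (indeed one can arrange $|M_\xi|=|\xi|+\nu$ along the way, or work with models of size exactly $\nu$ indexed so that the relevant one appears), each closed under $f$, each containing $P,p$ and the previous chain, and with $M_\xi\cap\lambda\in\lambda$. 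Let $M=\bigcup_\xi M_\xi$, so $|M|=\nu$ and $M\cap\lambda=\sup_\xi(M_\xi\cap\lambda)\in\lambda$ because $\lambda$ is regular and $\nu<\lambda$. Now enumerate the maximal antichains of $P$ lying in $M$ as $\langle A_\alpha:\alpha<\nu\rangle$ (there are at most $\nu$ of them since $|M|=\nu$), arranging that $A_\alpha\in M_{\alpha+1}$. The key step is to use $\FA_\nu(P\restriction p)$ applied, not to the antichains directly, but to the dense sets $E_\alpha=\{q\in P\restriction p: q\text{ is below some element of }A_\alpha\}$ for $\alpha<\nu$; these are $\nu$-many dense open subsets of $P\restriction p$, so there is a filter $G\subseteq P\restriction p$ meeting every $E_\alpha$. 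Then $G$ meets every maximal antichain of $P$ that belongs to $M$ (if $A\in M$ then $A\in M_{\alpha+1}$ for some $\alpha$ and, by elementarity and the fact that $M_{\alpha+1}\cap\lambda\in\lambda$ together with $|A|\le\nu$... — see the delicate point below — $A$ is enumerated among the $A_\beta$), so $G$ is $M$-generic, as desired.

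The delicate point — which I expect to be the main obstacle — is matching up the bookkeeping so that \emph{every} maximal antichain of $P$ in $M$ actually appears on the list $\langle A_\alpha:\alpha<\nu\rangle$ that was frozen before $G$ was chosen, i.e. so that the single application of $\FA_\nu$ suffices. Since $|M|=\nu$ this is just a matter of fixing an enumeration of $[M]^{<\omega}$ or of $\{A\in M: A\text{ a maximal antichain of }P\}$ of length $\nu$ in advance; because $\lambda=\nu^+$ and we only ever need $\nu$-many dense sets at once, the cardinal arithmetic is automatically in our favour — this is exactly where successorhood of $\lambda$ is used, and is the reason the inaccessible case is genuinely open. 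One must also check the elementarity bookkeeping: that closing under $f$ and under the chain construction can be done while keeping each $M_\xi$ of size $<\lambda$ with $M_\xi\cap\lambda$ an ordinal, which is routine since $\nu^{<\omega}=\nu$ need not even be invoked — one simply takes $M_\xi$ to be the Skolem hull of $\xi\cup\{P,p,f,\langle M_\eta:\eta<\xi\rangle\}$ inside $H_\theta$ and notes $|M_\xi|\le|\xi|+\aleph_0<\lambda$, with $M_\xi\cap\lambda$ an initial segment of $\lambda$ because $\xi\subseteq M_\xi$ and $\lambda=\nu^+$ forces $M_\xi\cap\lambda$ to have order type $<\lambda$; passing to a club of $\xi$ below $\nu$ where $M_\xi\cap\lambda=\xi$ if one wants the cleanest statement. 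Finally, one remarks that $M\cap H_{|P|^+}\prec H_{|P|^+}$ and is closed under $f$, so it is a closure point of $f$ witnessing $S^\lambda_P\neq\emptyset$ below $f$; since $f$ was arbitrary, $S^\lambda_P$ is stationary.
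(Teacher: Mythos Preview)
Your $(2)\Rightarrow(1)$ is fine. The problem lies in $(1)\Rightarrow(2)$, and it is not the bookkeeping issue you flag as the ``delicate point''.

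The real gap is the final step. Your filter $G$ meets each $E_\alpha$, so for every maximal antichain $A_\alpha\in M$ you obtain some $q\in G$ below some $a\in A_\alpha$, hence $a\in G\cap A_\alpha$. But the paper defines an $M$-generic filter by the requirement $G\cap A\cap M\neq\emptyset$ for every maximal antichain $A\in M$, and nothing in your argument forces $a$ to lie in $M$. Whenever $P$ has maximal antichains of size $\geq\lambda$ (e.g.\ any ${<}\lambda$-closed collapse of a large cardinal to $\lambda$, for which $\FA_\nu$ certainly holds), an $A\in M$ will satisfy $A\not\subseteq M$, and a filter produced by applying $\FA_\nu$ to the dense sets $E_\alpha$ may perfectly well select a point of $A\setminus M$. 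The chain $\langle M_\xi\rangle$ and the enumeration do nothing here: the difficulty is that $\FA_\nu$ only promises that $G$ meets prescribed dense subsets of $P$, with no control over \emph{where} in $P$ the meeting occurs.

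The paper's proof circumvents this by a genuinely different construction. One fixes a single $M_0\prec H_\theta$ with $|M_0|=\nu$ and $M_0\cap\lambda\in\lambda$, applies $\FA_\nu$ once to get a filter $H$ meeting all dense sets in $M_0$, and then \emph{enlarges} $M_0$ to the term-hull
\[
M_1=\{a\in H_\theta:\exists\tau\in M_0\cap V^P\ \exists q\in H\ (q\Vdash_P\tau=a)\}.
\]
One checks that $M_1\prec H_\theta$, that $|M_1|=\nu$, that $M_1\cap\lambda\in\lambda$ (this is where successorhood is actually used), and---the point---that $H$ is $M_1$-generic in the strong sense. For the last: given a maximal antichain $A\in M_1$, pick a name $\dot{A}\in M_0$ and $q\in H$ with $q\Vdash\dot{A}=A$; since $\Vdash\dot{A}\cap\dot{G}\neq\emptyset$, elementarity of $M_0$ supplies a name $\tau'\in M_0$ with $\Vdash\tau'\in\dot{A}\cap\dot{G}$; decoding $\tau'$ along some $r\leq q$ in $H$ yields $a\in A\cap H$ which lies in $M_1$ \emph{by the very definition of $M_1$}. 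This passage from $M_0$ to its term-hull is precisely the ingredient missing from your plan.
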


\begin{proof}
Only one direction is non trivial. 
We assume that $\FA_\nu(P)$ holds in $V$ and we prove 
that $V$ models that $S_{P}^\lambda$ is stationary.
We leave to the reader to prove the other
implication.

First of all we leave the reader to check that if $\FA_\nu(P)$ holds, then all cardinals less or equal to 
$\nu$ are
preserved by $P$.
Let $P\in H_\theta$ with $\theta$ regular larger than $\lambda$.

Pick $M_0\prec H_\theta$ such that $P\in M_0$ and $M_0\cap\lambda\in\lambda>|M_0|=\nu$.
Now since $|M_0|=\nu$, there is a filter $H$ which meets all the dense sets in $M_0$.
The proof is completed once we prove the following:
\begin{claim}
\[
M_1=\{a\in H_\theta:\exists\tau\in M_0\cap V^P\exists q\in H\text{ such that }
q\Vdash_P a=\tau\}\prec H_\theta,
\]
$H$ is an $M_1$-generic filter for $P$, $|M_1|=\nu$ and $M_1\cap\lambda\in\lambda$.
\end{claim}
\begin{proof}
We prove each item as follows:

\begin{itemize}

\item \textbf{$M_1\prec H_\theta$:}

Given a first order formula  $\phi(x_0,\dots,x_n)$, and $a_1,\dots a_n\in M_1$ such that
$H_\theta$ models $\exists x \phi(x,a_1\dots,a_n)$ we want to find $a_0\in M_1$ such that
$H_\theta$ models $\phi(a_0,a_1\dots,a_n)$. Let $\tau_1,\dots,\tau_n\in M_0\cap V^P$ be 
such that for some $q_i\in H$, $q_i\Vdash \tau_i=a_i$ and 
\[
\Vdash_P\exists x\in H_\theta^V\phi(x,\tau_1,\dots\tau_n)^{H_\theta^V}.
\]
Since $P\in H_\theta$ we can find $\tau\in H_\theta$ such that
\[
\Vdash_P\phi(\tau,\tau_1,\dots\tau_n)^{H_\theta^V}\wedge\tau\in V.
\] 
In particular we get that
\[
H_\theta\models[\Vdash_P \tau\in V\wedge\phi(\tau,\tau_1,\dots,\tau_n)^{H_\theta^V}].
\]
Since $M_0\prec H_\theta$, we can actually find such a $\tau\in M_0\cap V^P$.
Then the set of $q\in P$ which force the value of $\tau$ to be some element of
$H_\theta$ is open dense and belongs to $M_0$.
Thus there is  $q\in H$ which belongs to this
open dense set and refines all the $q_i$, and $a\in H_\theta$  such that
$q\Vdash a=\tau$. Then $a\in M_1$ and 
$H_\theta$ models that $\phi(a,a_1\dots,a_n)$, as was to be shown.

\item \textbf{$H$ is an $M_1$-generic filter for $P$:} 

Pick $D\in M_1$ dense subset of $P$ and
$\dot{D}\in M_0$ such that 
$\Vdash_P\tau$\emph{ is a dense subset of $P$ which belongs to $V$} and
such that for some
$q\in H$, $q\Vdash_P\dot{D}=D$. Then we get that $\Vdash_P \tau\cap\dot{G}\neq\emptyset$, 
thus there is some
$\tau'\in M_0$ such that $\Vdash_P \tau'\in\dot{D}\cap\dot{G}$, since $M_0\prec H_\theta$.
Now we can find $r\leq q$, $r\in H$ and $p\in P$ such that $r\Vdash_P\tau'=p$.
Since 
\[
r\Vdash_P p=\tau'\in\dot{G}\cap \dot{D}=\dot{G}\cap D,
\] 
we get that $p\geq r$ is also in $H$ and thus that
$H\cap D\neq\emptyset$.

\item  \textbf{$M_1\cap\lambda\in\lambda>|M_1|=\nu$:}

First of all $M_1$ has size $|M_0|=\nu$ since it is the surjective image of 
$M_0\cap V^P$ and contains $M_0$.
Thus $\sup(M_1\cap\lambda)<\lambda$.
Now pick $\beta\in M_1\cap\lambda$. 
Find $\tau\in M_0$ such that $\Vdash_P\tau\in\lambda$ and for some
$q\in H$, $q\Vdash_P \tau=\beta$.  Let $\phi_\tau\in V^P\cap M_0$ be a $P$-name such that
$\Vdash_P\phi_\tau:\nu\to\tau$\emph{ is a bijection which belongs to $V$}.
Find $r\leq q$ $r\in H$ such that $r\Vdash\phi_\tau=\phi$ for some $\phi\in V$ 
bijection of $\nu$ with $\beta$. Since $\nu\subset M_0\subset M_1$ we get that
$\phi[\nu]=\beta\subset M_1$.
\end{itemize} 
The Claim and thus the Lemma are proved 
(Notice that the unique part of the proof in which we used that
 $\lambda$ is a successor cardinal is to get that $M_1\cap\lambda\in\lambda$).
\end{proof}
\end{proof}

\subsection{Preliminaries \textsf{III}: stationary sets and 
the stationary tower forcing}\label{sec:stattow}

$S$ is stationary if for all $f:P_\omega(\cup S)\to(\cup S)$ 
there is an $X\in S$ such that
$f[P_{\omega}(X)]\subset X$.

For a stationary set $S$ and a set $X$, if $\cup S\subseteq X$
we let $S^{X}=\{M\in P(X):M\cap\cup S\in S\}$,
if $\cup S\supseteq X$
we let $S\restriction X=\{M\cap X:M\in S\}$.

If $S$ and $T$ are stationary sets we say that $S$ and $T$ are compatible if
\[
S^{(\bigcup S)\cup(\bigcup T)}\cap T^{(\bigcup S)\cup(\bigcup T)}
\] 
is stationary.

We let $S\wedge T$ denote the set of $X\in P(\cup S\cup\cup T)$ such that
$X\cap\cup S\in S$ and $X\cap\cup T\in T$ and for all $\eta$
$\bigwedge \{S_\alpha:\alpha<\eta\}$ is the set of $M\in P(\bigcup_{\alpha<\eta}S_\alpha)$
such that $M\cap \cup S_\alpha\in S_\alpha$ for all $\alpha\in M\cap\eta$.

For a set $M$ we let $\pi_M:M\to V$ denote the transitive collapse of the structure $(M,\in)$ onto a transitive set $\pi_M[M]$ and we let $j_M=\pi_M^{-1}$.

For any regular cardinal $\lambda$
\[
R_\lambda=\{X:X\cap\lambda\in\lambda\text{ and }|X|<\lambda\}.
\]
and $\mathbb{R}^\lambda_\delta$ is the 
stationary tower whose elements are stationary sets $S\in V_\delta$ such that 
$S\subset R_\lambda$ with order given by $S\leq T$ if, letting $X=\cup(T)\cup\cup(S)$, 
$S^X$ is contained in $T^X$ modulo a club. 

$\mathbb{R}_\delta$ will denote $\mathbb{R}^{\aleph_2}_\delta$.

We recall that if $G$ is $V$-generic for $\mathbb{R}^\lambda_\delta$, then 
$G$ induces in a natural way a direct limit ultrapower embedding 
$j_G:V\to M^G$ where $[f]_G\in M^G$ if $f:P(X_f)\to V$ in $V$ and
$[f]_G\mathrel{R}_G [h]_G$ iff for some $\alpha<\delta$ such that 
$X_f,X_h\in V_\alpha$
we have that
\[
\{M\prec V_\alpha: f(M\cap X_f)\mathrel{R} h(M\cap X_h)\}\in G.
\]
If $M^G$ is well founded it is customary to identify $M^G$ with its transitive collapse.

We recall the following results about the stationary tower (see~\cite[Chapter 2]{LAR04}):

\begin{theorem}[Woodin]\label{thm:wstf}
Assume $\delta$ is a Woodin cardinal, $\lambda<\delta$ is regular 
and $G$ is $V$-generic for $\mathbb{R}^\lambda_\delta$. Then
\begin{enumerate}
\item \label{thm:wstf-1}
$M^G$ is a definable class in $V[G]$ and
\[
V[G]\models (M^G)^{<\delta}\subseteq M^G.
\]
\item\label{thm:wstf-2}
$V_\delta, G\subseteq M^G$ and $j_G(\lambda)=\delta$.
\item\label{thm:wstf-3}
$M^G\models\phi([f_1]_G,\dots,[f_n]_G)$ if and only for some $\alpha<\delta$ such that
$f_i:P(X_i)\to V$ are such that $X_i\in V_\alpha$ for all $i\leq n$:
\[
\{M\prec V_\alpha: V\models \phi(f_1(M\cap X_1),\dots,f_n(M\cap X_n))\}\in G.
\]
\end{enumerate}
Moreover by~\ref{thm:wstf-1} $M^G$ is well founded and thus can be identified with its transitive 
collapse. 
With this identifications we have that for every $\alpha<\delta$ 
and every set $X\in V_\alpha$,
$X=[\langle\pi_M(X): M\prec V_\alpha, X\in M\rangle]_G$.
In particular with this identification we get that
\[
(H_{j_G(\lambda)})^{M[G]}=V_\delta[G]=(H_\delta)^{V[G]}.
\]
and that $j_G\restriction H_\lambda^V$ is the identity and witnesses
that $H_\lambda^V\prec H_{j_G(\lambda)}^{V[G]}$.
\end{theorem}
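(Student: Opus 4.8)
The plan is to establish this as the restricted stationary tower instance of Woodin's basic analysis, in four steps: set up the generic ultrapower and prove its {\L}o\'s theorem (clause (3)); extract from genericity enough normality to represent $V_\delta$ and compute $j_G(\lambda)$; invoke the Woodinness of $\delta$ for well-foundedness and $<\delta$-closure (clause (1)); and finish with the identifications in the ``Moreover'' part. For the first step I would form $M^G$ as the direct limit of the ultrapowers $\mathrm{Ult}(V,G)$ along functions $f\colon\mathcal P(X_f)\to V$ with $X_f\in V_\delta$, setting $[f]_G\mathrel{R_G}[h]_G$ (where $R$ is $=$ or $\in$) iff $\{M\prec V_\alpha : f(M\cap X_f)\mathrel{R} h(M\cap X_h)\}\in G$ for some $\alpha<\delta$ with $X_f,X_h\in V_\alpha$. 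That $=_G$ is a congruence, that $\in_G$ is well defined, and that $j_G\colon x\mapsto[c_x]_G$ is elementary --- which is exactly clause (3) --- together amount to the {\L}o\'s theorem for $G$; the only point that uses more than that $G$ is a filter upward closed in the tower order is the existential quantifier, where from $\{M : V\models\exists y\,\phi(y,f_1(M\cap X_1),\dots)\}\in G$ one selects, by $\AC$ in $V$, a function $h$ with $\{M : V\models\phi(h(M),f_1(M\cap X_1),\dots)\}\in G$, the latter set belonging to $G$ by genericity.

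Next I would isolate the \emph{$V$-normality} of $G$: if $S\in G$ and $g(M)\in M$ for all $M\in S$, then $g$ is constant on a set in $G$ --- proved by a pressing-down density argument in the tower. With $V$-normality and $\in$-induction, every $x\in V_\delta$ is represented by the collapse function $f_x(M)=\pi_M(x)$, so $x=[f_x]_G$ once $M^G$ is identified with its transitive collapse (legitimate by the next step); hence $V_\delta\subseteq M^G$, and since each condition in $G$ is an element of $V_\delta$, also $G\subseteq M^G$. If $x\in H_\lambda^V$ then $\trcl(x)\subseteq M$ for a club of $M\in R_\lambda$ (as $|\trcl(x)|<\lambda$ and $M\cap\lambda\in\lambda$), so $\pi_M(x)=x$ on a $G$-large set and $j_G(x)=[c_x]_G=x$, whence $j_G\restriction H_\lambda^V=\id$. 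Finally $j_G(\lambda)=\delta$: the functions $M\mapsto\otp(M\cap\alpha)$ show $\alpha<j_G(\lambda)$ for every $\alpha<\delta$, so $\delta\le j_G(\lambda)$; and $\mathbb R^\lambda_\delta$ collapses every $\alpha\in(\lambda,\delta)$ to size $\lambda$, so every ordinal below $j_G(\lambda)$, being the value of a $\lambda$-valued function with support in $V_\delta$, is below $(\lambda^+)^{V[G]}=\delta$, giving $j_G(\lambda)\le\delta$.

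The main obstacle is the third step: well-foundedness of $M^G$ together with $V[G]\models(M^G)^{<\delta}\subseteq M^G$. This is the one place where the hypothesis ``$\delta$ Woodin'' is genuinely used, and it is the heart of the statement (Woodin's theorem; cf.\ \cite[Chapter~2]{LAR04}). I would run the standard reflection argument through $A$-strong cardinals: given a condition $S$ and a name $\dot h$ forced to enumerate either a $\gamma$-sequence of elements of $M^G$ with $\gamma<\delta$, or an $\omega$-sequence witnessing ill-foundedness, code $S$, $\dot h$ and the relevant names into a single set $A\subseteq V_\delta$, choose $\kappa<\delta$ that is $<\delta$-$A$-strong and above the supports involved, and take an extender embedding $k\colon V\to N$ with $\crit(k)=\kappa$, $V_\eta\subseteq N$ and $k(A)\cap V_\eta=A\cap V_\eta$ for $\eta<\delta$ as large as required; elementarity of $k$ below $\eta$ then yields, below $S$, a stronger condition forcing the first $\eta$ coordinates of $\dot h$ into $\rng(j_G\restriction V_\delta)$, and iterating this cofinally in $\delta$ puts the whole sequence into $M^G$ (and, in the ill-foundedness case, produces a contradiction). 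Well-foundedness then legitimates identifying $M^G$ with its transitive collapse, as used throughout.

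Finally, with $M^G$ transitive, the displayed representation $X=[\langle\pi_M(X):M\prec V_\alpha,\ X\in M\rangle]_G$ for $X\in V_\alpha$ is simply a restatement of the second step, re-deriving $V_\delta\subseteq M^G$ and $G\subseteq M^G$. Since $\delta$ is inaccessible and $\mathbb R^\lambda_\delta\subseteq V_\delta$, names of hereditary size $<\delta$ compute all of $(H_\delta)^{V[G]}$, so $(H_\delta)^{V[G]}=V_\delta[G]$; combining this with $V_\delta\cup\{G\}\subseteq M^G$ (so $V_\delta[G]\subseteq M^G$, and $M^G[G]=M^G$) and with $(H_\delta)^{M^G}\subseteq(H_\delta)^{V[G]}$, and using $j_G(\lambda)=\delta$, gives $(H_{j_G(\lambda)})^{M^G}=V_\delta[G]=(H_\delta)^{V[G]}$. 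And $j_G\restriction H_\lambda^V=\id$ together with the elementarity of $j_G\colon V\to M^G$ and $(H_\lambda)^V=(H_\lambda)^{M^G}\subseteq(H_{j_G(\lambda)})^{M^G}=(H_\delta)^{V[G]}$ yields $H_\lambda^V\prec H_{j_G(\lambda)}^{V[G]}$, completing the proof.
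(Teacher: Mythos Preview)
The paper does not prove this theorem: it is stated with a reference to \cite[Chapter~2]{LAR04} as background material and is used as a black box throughout. So there is no ``paper's own proof'' to compare against. Your sketch follows the standard argument from Larson's book---{\L}o\'s for the tower ultrapower, $V$-normality to represent $V_\delta$, the $A$-strong reflection argument for well-foundedness and $<\delta$-closure, and the final identifications---and as an outline it is correct.

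One small point worth tightening: your argument for $j_G(\lambda)\le\delta$ appeals to $(\lambda^+)^{V[G]}=\delta$, which in turn relies on $\delta$ remaining a cardinal in $V[G]$; that is usually derived \emph{from} the $<\delta$-closure of $M^G$, so as written there is a mild circularity. The cleaner route is the direct counting argument: every ordinal below $j_G(\lambda)$ is represented by some $f\colon P(X)\to\lambda$ with $X\in V_\delta$, and since $\delta$ is inaccessible there are only $\delta$ many such functions. This gives $j_G(\lambda)\le\delta$ without invoking closure.
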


\subsection{Preliminaries \textsf{IV}: Woodin cardinals are forcing axioms}\label{subsec:prel3}

The following is an outcome of Woodin's work on the stationary tower~\cite[Theorem 2.53]{woodin}.
\begin{lemma}[Woodin]\label{thm:wooFA}
Assume there are class many Woodin cardinals.
and $\lambda$ is a regular cardinal.
Then the following are equivalent:
\begin{enumerate}
\item\label{thm:wooFA-2}
 $S^\lambda_{\mathbb{P}}$ is stationary, where
 \[
S^\lambda_{\mathbb{P}}=\{M\prec H_{|\mathbb{P}|^+}:M\in R_\lambda\text{ and there is an 
$M$-generic filter for $\mathbb{P}$}\} 
 \] 
\item\label{thm:wooFA-3}
$\mathbb{P}$ completely embeds into $\mathbb{R}^\lambda_\delta\restriction T$
for some Woodin cardinal $\delta$ and some stationary $T\in\mathbb{R}^\lambda_\delta$. 
\end{enumerate}
\end{lemma}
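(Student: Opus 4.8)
\textbf{Proof proposal for Lemma~\ref{thm:wooFA}.}

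The plan is to prove both directions, but only \ref{thm:wooFA-3} $\Rightarrow$ \ref{thm:wooFA-2} requires real work; the converse is where the Woodin cardinals enter. For the easy direction, suppose $\mathbb{P}$ completely embeds into $\mathbb{R}^\lambda_\delta\restriction T$ via $i$, with $\delta$ Woodin and $T\in\mathbb{R}^\lambda_\delta$ stationary. Fix a large regular $\theta$ and a closure-point argument: I would take $M\prec H_\theta$ with $\mathbb{P},i,\delta,T\in M$, $M\in R_\lambda$, and $M\cap\cup T\in T$ (this last requirement is satisfiable on a stationary set of $M$'s, since $T$ is stationary and any club filtering condition can be absorbed into the Skolem function). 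Then by Theorem~\ref{thm:wstf}, forcing with $\mathbb{R}^\lambda_\delta$ below $T$ produces a generic embedding $j_G$ with $M\cap\cup T\in T\in G$ arranged so that $j_G\restriction M$ agrees with the transitive collapse $\pi_M$ in the relevant sense; crucially, $G\cap i[\mathbb{P}]$ pulls back to an $M$-generic filter for $\mathbb{P}$ (here one uses that $i$ maps maximal antichains of $\mathbb{P}$ lying in $M$ to maximal antichains of $\mathbb{B}(\mathbb{R}^\lambda_\delta\restriction T)$, of which $G$ meets the ones in $M$). This witnesses $M\in S^\lambda_{\mathbb{P}}$. Since the set of such $M$ is stationary and an $M$-generic filter exists for each, $S^\lambda_{\mathbb{P}}$ is stationary.

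For the hard direction \ref{thm:wooFA-2} $\Rightarrow$ \ref{thm:wooFA-3}, assume $S^\lambda_{\mathbb{P}}$ is stationary and pick a Woodin cardinal $\delta$ above $|\mathbb{P}|$ (here is the only use of \emph{class many} Woodins — we just need one above $\mathbb{P}$, and we want the stationary tower up to $\delta$ to behave well, which Theorem~\ref{thm:wstf} guarantees). The idea is to let $T = S^\lambda_{\mathbb{P}}$ itself (intersected with a suitable $V_\alpha$ so it lies in $\mathbb{R}^\lambda_\delta$; note $S^\lambda_{\mathbb{P}}\subseteq R_\lambda$ by definition) and to build the complete embedding $i:\mathbb{P}\to\mathbb{B}(\mathbb{R}^\lambda_\delta\restriction T)$ by sending $p\in\mathbb{P}$ to the stationary set $T_p = \{M\in T: \text{the canonical } M\text{-generic filter picks a condition below } p\}$. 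The point of restricting to $T$ is that every $M\in T$ carries an $M$-generic filter $H_M$ for $\mathbb{P}$; with some care (using a fixed well-ordering of $H_\theta$ to pick $H_M$ canonically, or rather: working with the generic $j_G$, the generic filter on the tower below $T$ selects a coherent $M\cap\mathbb{P}$), the generic ultrapower $j_G$ applied to $\mathbb{P}$ reconstructs, from $G$, an actual $V$-generic filter on $\mathbb{P}$: namely $\{p\in\mathbb{P}: T_p\in G\}$. One checks this is a filter (using $T_p\wedge T_q \subseteq T_r$ whenever $r\leq p,q$ appropriately, via the compatibility/meet operations on the tower), and that it is $V$-generic because for any maximal antichain $A\in V$ of $\mathbb{P}$, the sets $T_a$ for $a\in A$ cover $T$ modulo a club (every $M\in T$ with $A\in M$ has its $M$-generic filter meet $A$). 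By Lemma~\ref{lem:cige} (\ref{lem:cige-2} $\Rightarrow$ \ref{lem:cige-1}), the existence in $V^{\mathbb{R}^\lambda_\delta\restriction T}$ of such a $V$-generic filter for $\mathbb{P}$ is exactly equivalent to the complete embedding we want.

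The main obstacle, and the step I would spend the most care on, is the \emph{coherence} of the assignment $M\mapsto H_M$ of generic filters: a priori $S^\lambda_{\mathbb{P}}$ only asserts that \emph{some} $M$-generic filter exists for each $M$, with no uniformity, so the naive $T_p$ need not have the correct Boolean-algebra structure (we could have $T_p\cap T_q$ stationary yet no single condition forced by it). The fix is to pass to a refined stationary set: instead of $T = S^\lambda_{\mathbb{P}}$, use $T^* = \{(M,H): M\in S^\lambda_{\mathbb{P}}, H \text{ an } M\text{-generic filter for }\mathbb{P}\}$ coded as a stationary set of pairs (equivalently, stationary set of $N\prec H_\theta$ that internally see such an $H$ as an element), so that $H$ becomes a \emph{function of} the model and $T_p = \{(M,H)\in T^*: p\in H\}$ is now genuinely well-behaved: $T_p\wedge T_q = T_r$-ish identities hold on the nose modulo clubs, and $T_p$ is stationary iff $p\neq 0$ (nonempty iff $p$ is in \emph{some} generic, which it always is by density of $\{N: p\in H_N\}$). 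Then $i(p) = [T_p]$ is order-preserving and antichain-preserving into $\mathbb{B}(\mathbb{R}^\lambda_\delta\restriction T^*)$, and we verify $S^\lambda_{\mathbb{P}}$ stationary $\Leftrightarrow$ $T^*$ stationary (a routine reflection, since a closure point for a Skolem function naming $H$ witnesses both). This bookkeeping with pairs is the technical heart; everything else is an application of Theorem~\ref{thm:wstf} and Lemma~\ref{lem:cige}.
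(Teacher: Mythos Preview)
Your treatment of \ref{thm:wooFA-2} $\Rightarrow$ \ref{thm:wooFA-3} is essentially the paper's approach: the paper (via the template of Theorem~\ref{thm:MM++}) simply fixes by choice a single $H_M$ for each $M\in S^\lambda_{\mathbb{P}}$ and lets $\dot H=[\langle H_M\rangle]_{\dot G}$, then invokes Lemma~\ref{lem:cige} exactly as you do. Your pairs $(M,H)$ are just a repackaging of that choice function, so there is no real difference here.

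The gap is in your ``easy'' direction \ref{thm:wooFA-3} $\Rightarrow$ \ref{thm:wooFA-2}. You pick $M\in V$, force to obtain a $V$-generic $G$ for $\mathbb{R}^\lambda_\delta\restriction T$, and then observe that $i^{-1}[G]$ is $M$-generic for $\mathbb{P}$. But $i^{-1}[G]$ lives in $V[G]$, not in $V$; the membership $M\in S^\lambda_{\mathbb{P}}$ is a statement about $V$, and for $|M|\geq\aleph_1$ the existence of an $M$-generic filter is \emph{not} downward absolute from $V[G]$ to $V$. So as written your argument does not show $S^\lambda_{\mathbb{P}}$ is stationary in $V$.

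The fix, and what the paper's template actually does, is to run the {\L}o\'s argument to pull a function back into $V$: with $G$ $V$-generic for $\mathbb{R}^\lambda_\delta\restriction T$, the filter $H=i^{-1}[G]$ lies in $V_\delta[G]=(H_{\omega_2})^{M^G}\subseteq M^G$ (here is where the Woodinness of $\delta$ is used, via Theorem~\ref{thm:wstf}), so $H=[f]_G$ for some $f\in V$. By {\L}o\'s, the set
\[
S=\{M\prec V_\alpha:\ f(M)\text{ is a }\pi_M[M]\text{-generic filter for }\pi_M(\mathbb{P})\}
\]
belongs to $G$ and hence is stationary in $V$. For each $M\in S$ the filter $\pi_M^{-1}[f(M)]\in V$ is $M$-generic for $\mathbb{P}$, so $S\restriction H_{|\mathbb{P}|^+}\subseteq S^\lambda_{\mathbb{P}}$. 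The point is that $f$, not $G$, supplies the generic filters, and $f$ is a ground-model object. (A side remark: your opening sentence says ``only \ref{thm:wooFA-3} $\Rightarrow$ \ref{thm:wooFA-2} requires real work'' and then immediately calls that same implication the easy direction; you should untangle that.)
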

\begin{proof}
The proof of this Lemma 
can be worked out along the same lines of
 the proof of Theorem~\ref{thm:MM++} in the next subsection. Thus we refer the reader to
 that proof.
\end{proof}

By Woodin's equivalence above and Lemma~\ref{lem:WOOFA}
we get the following:
\begin{theorem}{Woodin~\cite[Theorem 2.53]{woodin}}\label{thm:wooFA-bis}

Assume $V$ is a model of 
$\ZFC+$\emph{ there are class many Woodin cardinals},
and $\lambda=\nu^+$ is a successor cardinal in $V$.

Then 
the following are equivalent for any partial order $P\in V$:
\begin{enumerate}
\item
$S^\lambda_P$ is stationary.
\item
$\FA_{\nu}(P)$ holds.
\item
There is a locally complete
embedding of $P$ into $\mathbb{R}^\lambda_\delta$ for some Woodin cardinal $\delta>|P|$.
\end{enumerate}
\end{theorem}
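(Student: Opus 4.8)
The plan is to derive Theorem~\ref{thm:wooFA-bis} by combining the two results that precede it, namely Woodin's equivalence (Lemma~\ref{thm:wooFA}) and the successor-cardinal characterization of forcing axioms (Lemma~\ref{lem:WOOFA}). Since $\lambda=\nu^+$ is a successor cardinal by hypothesis, Lemma~\ref{lem:WOOFA} directly gives the equivalence of item~(1) ($S^\lambda_P$ is stationary) and item~(2) ($\FA_\nu(P)$ holds); note that Lemma~\ref{lem:WOOFA} is pure $\ZFC$ and needs no large cardinals. Then, under the hypothesis that there are class many Woodin cardinals, Lemma~\ref{thm:wooFA} gives the equivalence of item~(1) with the statement that $\mathbb{P}$ completely embeds into $\mathbb{R}^\lambda_\delta\restriction T$ for some Woodin $\delta$ and some stationary $T\in\mathbb{R}^\lambda_\delta$. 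So the only genuine work is to reconcile the phrasing ``completely embeds into $\mathbb{R}^\lambda_\delta\restriction T$'' appearing in Lemma~\ref{thm:wooFA} with the phrasing ``there is a locally complete embedding of $P$ into $\mathbb{R}^\lambda_\delta$ for some Woodin $\delta>|P|$'' appearing as item~(3) here.

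For that reconciliation I would argue as follows. First, a complete embedding of $P$ into $\mathbb{R}^\lambda_\delta\restriction T=\mathbb{R}^\lambda_\delta\restriction T$ is, by the definition given in Subsection~\ref{sec:comemb}, literally a locally complete (equivalently, regular) embedding of $P$ into $\mathbb{R}^\lambda_\delta$, since $T$ plays the role of the condition $q\in Q$ in that definition. Conversely, a locally complete embedding $i\colon P\to\mathbb{B}(\mathbb{R}^\lambda_\delta)$ is by definition a complete embedding of $P$ into $\mathbb{B}(\mathbb{R}^\lambda_\delta\restriction S)$ for some condition $S$, and since $\mathbb{R}^\lambda_\delta$ is (forced to be) separative below any of its conditions, $\mathbb{B}(\mathbb{R}^\lambda_\delta\restriction S)$ is the same as restricting $\mathbb{B}(\mathbb{R}^\lambda_\delta)$ below (the image of) $S$, so this matches item~(2) of Lemma~\ref{thm:wooFA} with $T=S$. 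Second, for the size requirement $\delta>|P|$: if $S^\lambda_P$ is stationary and $\delta_0$ is any Woodin cardinal witnessing Lemma~\ref{thm:wooFA}, simply pick a Woodin cardinal $\delta\geq\max(\delta_0,|P|^+)$ — such $\delta$ exists since there are class many Woodin cardinals — and observe that $S$ remains a condition in $\mathbb{R}^\lambda_{\delta}$ (as $V_{\delta_0}\subseteq V_\delta$, and $S\subseteq R_\lambda$ is preserved), and the same embedding $i$ witnesses regular embeddability into $\mathbb{R}^\lambda_\delta$ via the natural inclusion $\mathbb{B}(\mathbb{R}^\lambda_{\delta_0})\hookrightarrow\mathbb{B}(\mathbb{R}^\lambda_\delta)$, which is itself a complete embedding by Theorem~\ref{thm:wstf} (the tower at $\delta_0$ is absorbed into the tower at $\delta$). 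This upgrades any witnessing $\delta_0$ to one with $\delta>|P|$, giving item~(3) as stated.

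Putting these pieces together: (1)$\iff$(2) is Lemma~\ref{lem:WOOFA}; (1)$\iff$(3) follows from Lemma~\ref{thm:wooFA} together with the two bookkeeping observations above (reading ``complete embedding into $\mathbb{R}^\lambda_\delta\restriction T$'' as ``locally complete embedding into $\mathbb{R}^\lambda_\delta$'', and upgrading the Woodin cardinal so that $\delta>|P|$). Thus all three statements are equivalent, which is the content of Theorem~\ref{thm:wooFA-bis}.

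I expect the main obstacle to be entirely notational rather than mathematical: carefully matching the several slightly different formulations of ``regular/locally complete/complete embedding into a stationary tower'' used in Lemma~\ref{thm:wooFA}, in Subsection~\ref{sec:comemb}, and in the statement here, and in particular verifying that restricting the tower to a stationary condition $T$ is the same operation as passing to the Boolean algebra below $T$ and that enlarging the Woodin cardinal $\delta$ does no harm. Since the paper explicitly defers the proof of Lemma~\ref{thm:wooFA} to the proof of Theorem~\ref{thm:MM++}, the honest statement of the argument here is just ``combine Lemma~\ref{thm:wooFA} with Lemma~\ref{lem:WOOFA},'' and I would keep the written proof correspondingly short.
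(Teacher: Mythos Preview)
Your proposal is correct and takes essentially the same approach as the paper, which simply states that the theorem follows by combining Lemma~\ref{thm:wooFA} (for (1)$\iff$(3)) with Lemma~\ref{lem:WOOFA} (for (1)$\iff$(2)). You have added more notational bookkeeping than the paper does; note in particular that the upgrade to $\delta>|P|$ is even easier than you make it, since the proof of Lemma~\ref{thm:wooFA} (modeled on Theorem~\ref{thm:MM++}) actually produces a complete embedding into $\mathbb{R}^\lambda_\delta\restriction T$ for \emph{every} Woodin cardinal $\delta$ with $P\in V_\delta$, so one may simply choose such a $\delta>|P|$ directly rather than composing embeddings of towers.
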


$\mathsf{SSP}$ denote the class of posets which preserve stationary subsets of $\omega_1$.
Martin's maximum $\MM$ asserts that  $\FA_{\aleph_1}(P)$ holds for all $P\in\SSP$.

The following sums up the current state of affair regarding the classes $\Gamma_\lambda$
for $\lambda\leq\aleph_2$.

\begin{theorem}\label{thm:forcingaxioms-stateaffairs}
Assume there are class many Woodin cardinals. 
Then:
\begin{enumerate}
\item\label{thm:forcingaxioms-stateaffairs-1}
$\Gamma_{\aleph_1}$ is the class of all posets which regularly
embeds into some
$\mathbb{R}^{\aleph_1}_\delta$.
\item\label{thm:forcingaxioms-stateaffairs-2}
$\mathbb{R}^{\aleph_2}_\delta\in\SSP$ for any Woodin cardinal $\delta$.
\item\label{thm:forcingaxioms-stateaffairs-3}
$\MM$ holds if and only if
$\mathsf{SSP}$ is the class of all posets which regularly embeds
into $\mathbb{R}^{\aleph_2}_\delta$ for some Woodin cardinal $\delta$.
(Foreman, Magidor, Shelah~\cite{foreman_magidor_shelah}).
\end{enumerate}
\end{theorem}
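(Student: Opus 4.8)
The plan is to prove the three items separately, in each case reducing to the equivalence of "$S^\lambda_P$ is stationary" with "$P$ regularly embeds into some $\mathbb{R}^\lambda_\delta$" supplied by Theorem~\ref{thm:wooFA-bis} (for $\lambda=\aleph_1,\aleph_2$), together with the definition of $\Gamma_\lambda$ as the class of $P$ for which $S^\lambda_P$ is stationary and the characterization $\FA_{\aleph_1}(\mathsf{SSP})$ of $\MM$.

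For item~\ref{thm:forcingaxioms-stateaffairs-1}: by definition $\Gamma_{\aleph_1}$ is the class of $P$ with $S^{\aleph_1}_P$ stationary. Since $\aleph_1=\aleph_0^+$ is a successor cardinal, Theorem~\ref{thm:wooFA-bis} applied with $\nu=\aleph_0$ gives that $S^{\aleph_1}_P$ is stationary iff $P$ has a locally complete (= regular) embedding into $\mathbb{R}^{\aleph_1}_\delta$ for some Woodin $\delta>|P|$. This is exactly the asserted description. (One should note that $\FA_{\aleph_0}(P)$ holds for every poset $P$ by Baire category, so the middle clause of Theorem~\ref{thm:wooFA-bis} is vacuously true here; this is harmless — it just says $\Gamma_{\aleph_1}$ is the class of all posets, consistent with what was stated in the introduction.) For item~\ref{thm:forcingaxioms-stateaffairs-2}: $\mathbb{R}^{\aleph_2}_\delta\in\SSP$ for Woodin $\delta$ is the standard fact that the stationary tower at height a Woodin cardinal, restricted to stationary sets concentrating on $R_{\aleph_2}$, preserves $\omega_1$ and stationary subsets of $\omega_1$; this follows from Theorem~\ref{thm:wstf}, since by \ref{thm:wstf-2}--\ref{thm:wstf-3} and the identification of $M^G$ with its transitive collapse, $\omega_1^V$ is moved to $\omega_1^{V[G]}$ and any $S\subseteq\omega_1$ stationary in $V$ remains stationary in $V[G]$ via the generic elementary embedding $j_G$, whose critical point is $\lambda=\aleph_2$, hence fixes $\omega_1$ and its stationary subsets pointwise while preserving their stationarity. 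I will cite~\cite[Chapter 2]{LAR04} for this.

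For item~\ref{thm:forcingaxioms-stateaffairs-3}: this is the equivalence
\[
\MM\iff\bigl[\,\SSP=\{P:P\text{ regularly embeds into some }\mathbb{R}^{\aleph_2}_\delta,\ \delta\text{ Woodin}\}\,\bigr].
\]
For the direction $(\Leftarrow)$: if $\SSP$ equals that class, then every $P\in\SSP$ regularly embeds into some $\mathbb{R}^{\aleph_2}_\delta$, so by Theorem~\ref{thm:wooFA-bis} (with $\nu=\aleph_1$) $S^{\aleph_2}_P$ is stationary, hence $\FA_{\aleph_1}(P)$ holds; as this holds for all $P\in\SSP$, $\MM$ holds. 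For $(\Rightarrow)$: assume $\MM$. One inclusion is automatic: by item~\ref{thm:forcingaxioms-stateaffairs-2}, $\mathbb{R}^{\aleph_2}_\delta\in\SSP$, and a regular subforcing of an $\SSP$ forcing is $\SSP$ (a regular embedding gives a quotient presentation, and $\SSP$ is closed under taking regular suborders since if $H$ is $V$-generic for the whole and $H\cap P$ the induced generic, a set stationary in $V$ stays stationary in the larger extension $V[H]$ hence in the intermediate $V[H\cap P]$); thus everything in the right-hand class lies in $\SSP$. Conversely, take $P\in\SSP$; by $\MM$, $\FA_{\aleph_1}(P)$ holds, so by Theorem~\ref{thm:wooFA-bis} $P$ regularly embeds into $\mathbb{R}^{\aleph_2}_\delta$ for some Woodin $\delta>|P|$. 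Hence $\SSP$ is contained in the right-hand class, and equality follows. I will attribute this last equivalence to Foreman–Magidor–Shelah~\cite{foreman_magidor_shelah} and to Woodin.

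The main obstacle I anticipate is item~\ref{thm:forcingaxioms-stateaffairs-2} being used as a black box in the closure-under-regular-suborders argument for item~\ref{thm:forcingaxioms-stateaffairs-3}: one must be careful that "regularly embeds" (locally complete, below some condition $q$) still yields a genuine two-step factorization $\mathbb{R}^{\aleph_2}_\delta\restriction q\cong P*(\mathbb{R}^{\aleph_2}_\delta/P)$ so that $\SSP$-ness of the top transfers down — this is exactly what the quotient-forcing discussion of Subsection~\ref{sec:comemb} and the lemma on $\mathbb{Q}*(\mathbb{B}/i[\mathbb{Q}])$ being forcing-equivalent to $\mathbb{B}$ provides, so I will invoke it explicitly. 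The rest is essentially bookkeeping on top of Theorems~\ref{thm:wstf} and~\ref{thm:wooFA-bis}.
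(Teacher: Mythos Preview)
Your proposal is correct and follows essentially the same route as the paper: items~\ref{thm:forcingaxioms-stateaffairs-1} and~\ref{thm:forcingaxioms-stateaffairs-3} are both deduced from Theorem~\ref{thm:wooFA-bis}, and item~\ref{thm:forcingaxioms-stateaffairs-2} from the properties of the stationary tower in Theorem~\ref{thm:wstf}. The only minor difference is in item~\ref{thm:forcingaxioms-stateaffairs-2}: the paper gives a direct \L o\'s-style argument (represent the generic club $C$ as $[f]_G$ with $f(M)$ a club in $V$ for a.e.\ $M$, so $f(M)\cap S\neq\emptyset$, hence $C\cap S\neq\emptyset$ in $M^G$), whereas you argue via elementarity of $j_G$ that $S=j_G(S)$ is stationary in $M^G$ and then use $(H_{\omega_2})^{M^G}=(H_{\omega_2})^{V[G]}$ to transfer this to $V[G]$; these are two presentations of the same computation, and your more detailed treatment of the $(\Leftarrow)$ direction and of the ``regular suborder of an $\SSP$ forcing is $\SSP$'' step in item~\ref{thm:forcingaxioms-stateaffairs-3} is a harmless elaboration of what the paper leaves implicit.
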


\begin{proof}
We sketch a proof. 
\begin{description}
\item[\ref{thm:forcingaxioms-stateaffairs-1}] 

Trivial by Theorem~\ref{thm:wooFA-bis}.

\item[\ref{thm:forcingaxioms-stateaffairs-2}]

Let $S\in V$ be a stationary subset of $\omega_1$,
$G$ be $V$-generic for $\mathbb{R}^{\aleph_2}_\delta$ and $\dot{C}$ be a $\mathbb{R}^{\aleph_2}_\delta$-name for a club
subset of $\omega_1$. Then $\sigma_G(\dot{C})\in (H_{\omega_2})^{V[G]}=V_\delta[G]=(H_{\omega_2})^{M^G}$.
In particular there is some $f:P(V_\alpha)\to P(\omega_1)$ in $V_\delta$ such that
$[f]_G=\sigma_G(\dot{C})$. By Theorem~\ref{thm:wstf}.\ref{thm:wstf-3} the set of $M\prec V_\alpha$ such that
$f(M)$ is a club subset of $\omega_1$ in $V$ belongs to $G$.
Thus $f(M)\cap S$ is non empty for all such $M$, in particular
$M^G\models[f]_G\cap j_G(S)\neq\emptyset$. Now, since $j_G(\omega_1)=\omega_1$, we have that
$j_G(S)=S$. The conclusion follows.

\item[\ref{thm:forcingaxioms-stateaffairs-3}]
$\aleph_2$ is a a successor cardinal. 
For this reason, if $\MM$ holds, 
we can use the equivalence given by Theorem~\ref{thm:wooFA-bis} 
to get that any $P\in\SSP$
regularly embeds into some $\mathbb{R}^{\aleph_2}_\delta$. 
We can then use~\ref{thm:forcingaxioms-stateaffairs-2} 
to argue that if $P$ regularly embeds into some $\mathbb{R}^{\aleph_2}_\delta$ 
with $\delta$ a Woodin cardinal, 
then $P\in\SSP$.
\end{description}
\end{proof}

\subsection{Preliminaries \textsf{V}: $\MM^{++}$}\label{subsec:prel4} 
The ordinary proof of $\MM$ actually gives more information than what is captured by 
Theorem~\ref{thm:forcingaxioms-stateaffairs}.\ref{thm:forcingaxioms-stateaffairs-3}:
the latter asserts that any stationary set preserving poset $\mathbb{P}$ can be completely embedded into 
$\mathbb{R}^{\aleph_2}_\delta\restriction S^{\aleph_2}_{\mathbb{P}}$ for any Woodin cardinal 
$\delta>|\mathbb{P}|$ via some complete embedding $i$. However $\MM$ doesn't give much 
information on the nature of the 
complete embedding $i$.
On the other hand the standard model of $\MM$ provided by Foreman, Shelah and Magidor's 
consistency proof actually show that for any stationary set preserving
poset $\mathbb{P}$ and any Woodin cardinal $\delta>|\mathbb{P}|$
we can get a complete embedding $i:\mathbb{P}\to\mathbb{B}(\mathbb{R}^{\aleph_2}_\delta\restriction T)$
with a "nice" quotient forcing
$(\mathbb{R}^{\aleph_2}_\delta\restriction T)/i[\mathbb{P}]$.
For this reason we introduce the following well known variation of Martin's maximum:

\begin{definition}
$\MM^{++}$ holds if  
$T_{\mathbb{P}}$ is stationary for all $\mathbb{P}\in\SSP$, where $M\in T_{\mathbb{P}}$ iff
\begin{itemize}
\item
$M\prec H_{|\mathbb{P}|^+}$ is in $R_{\aleph_2}$,
\item
there is an $M$-generic filter $H$ for
$\mathbb{P}$ such that, if $G=\pi_M[H]$, $Q=\pi_M(\mathbb{P})$ and $N=\pi_M[M]$, 
then $\sigma_G:N^Q\to N[G]$ is an evaluation map such that
$\sigma_G(\pi_M(\dot{S}))$ is stationary  for all
$\dot{S}\in M$ $\mathbb{P}$-name for a stationary subset of $\omega_1$.
\end{itemize}
\end{definition}

The following is a well-known by-product of the ordinary consistency proofs of $\MM$ 
which to my knowledge is seldom explicitly stated:

\begin{theorem}[Foreman, Magidor, Shelah]
Assume $\kappa$ is supercompact in $V$, $f:\kappa\to V_\kappa$ is a Laver function and
\[
\{(P_\alpha,\dot{Q}_\alpha):\alpha\leq\kappa\}
\] 
is a revised countable support iteration such that
\begin{itemize}
\item
$P_\alpha\Vdash\dot{Q}_\alpha$ is semiproper, 
\item
$P_{\alpha+1}\Vdash|P_\alpha|=\aleph_1$,
\item
$\dot{Q}_\alpha=f(\alpha)$ if $P_\alpha\Vdash f(\alpha)$ is semiproper. 
\end{itemize}
Let $G$ be $V$-generic for $P_\kappa$. 
Then $\MM^{++}$ holds in $V[G]$.
\end{theorem}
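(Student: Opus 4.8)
The plan is to verify, by a standard reflection argument using the supercompactness of $\kappa$, that in $V[G]$ the set $T_{\mathbb{P}}$ is stationary for every stationary set preserving $\mathbb{P}$. Fix in $V[G]$ a stationary set preserving poset $\mathbb{P}$ and a function $f:P_\omega(H_\theta)\to H_\theta$ for a suitably large regular $\theta$; I must produce $M\in T_{\mathbb{P}}$ closed under $f$. First I would observe that, by the usual chain-condition and iteration-length bookkeeping for revised countable support iterations of length a supercompact, $\mathbb{P}$ can be taken (after passing to a tail of the iteration, using the Laver function $f$ at the relevant stage $\alpha$) to be the $\alpha$-th iterand $\dot{Q}_\alpha$, and $P_\kappa\cong P_\alpha * \dot{Q}_\alpha * \dot{R}$ where the tail $\dot{R}$ is forced to be semiproper; moreover $\mathbb{P}$ being stationary set preserving in $V[G]$ is, by Shelah's theorem (semiproperness vs. stationary set preservation in iterations going through $\kappa$), exactly what is needed to know $P_\alpha\Vdash\dot{Q}_\alpha$ is semiproper, so the Laver function guesses it correctly on a stationary set of $\alpha<\kappa$.

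Next I would take an elementary embedding $\pi:V\to N$ with critical point $\kappa$, $\pi(\kappa)>\theta$, and $N^{\theta}\subseteq N$ in $V$, arising from a supercompactness measure, together with a master condition argument: since the iteration up to $\kappa$ has the $\kappa$-cc and $\pi$ is continuous at stages below $\kappa$, the generic $G$ for $P_\kappa$ extends to a generic $G^*$ for $\pi(P_\kappa)=P_{\pi(\kappa)}$ over $N$, with $G\subseteq G^*$ and $j=\pi\restriction$ lifting to $j:V[G]\to N[G^*]$. The point is that $\pi(\kappa)$ witnesses, inside $N[G^*]$, that there is an internally semiproper "copy" of $\mathbb{P}$ sitting at stage $\kappa$ of the iteration $\pi(P_\kappa)$, and that $N[G^*]$ sees a generic $H$ for (the image of) $\mathbb{P}$ whose pointwise image under the collapse is correctly captured by the evaluation map $\sigma$, precisely because each iterand was chosen semiproper and semiproperness is exactly the preservation of stationary subsets of $\omega_1$ along the quotient.

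Concretely, I would let $M = j[H_\theta^{V[G]}]\in N[G^*]$; then $M\prec j(H_\theta^{V[G]})=H_{j(\theta)}^{N[G^*]}$, $M$ has size $<j(\kappa)=\pi(\kappa)$ which is the $\aleph_2$ of $N[G^*]$, and $M\cap j(\kappa)=\kappa\in j(\kappa)$, so $M\in R_{\aleph_2}^{N[G^*]}$ and $M$ is closed under $j(f)$. The transitive collapse of $M$ is $H_\theta^{V[G]}$, the collapse of $\mathbb{P}$ is $\mathbb{P}$ itself, and the generic filter $G$ for $P_\kappa$ — restricted to the factor $\mathbb{P}$ — gives an $M$-generic filter $H_0$ for $\mathbb{P}$ with $\pi_M[H_0]=$ the $V[G]$-generic we started with. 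The semiproperness of the tail $\dot{R}$ of the iteration past stage $\kappa$ is what guarantees that $\omega_1^{V[G]}=\omega_1^{N[G^*]}$ and that every $\mathbb{P}$-name $\dot{S}\in M$ for a stationary subset of $\omega_1$ is evaluated by $\sigma_G$ to a genuinely stationary set — since stationarity of subsets of $\omega_1$ is preserved by going from $V[G]$ to $N[G^*]$ and is downward absolute to the correct evaluation. Hence $M\in T_{\mathbb{P}}$ as a closure point of $j(f)$, and by elementarity of $j$, $T_{\mathbb{P}}$ is stationary in $V[G]$. The main obstacle is the bookkeeping showing that an arbitrary $\mathbb{P}\in\SSP$ in the final model really does appear as a semiproper iterand (equivalently, that the Laver function's guess can be arranged to hit it), which is the heart of the Foreman–Magidor–Shelah argument and requires the equivalence, in the iteration context, between semiproperness and stationary set preservation; I would cite this rather than reprove it, and focus the write-up on the lifting-and-collapse computation that extracts the extra "$\sigma_G(\dot{S})$ stationary" clause of $\MM^{++}$ from the semiproperness of the iterands.
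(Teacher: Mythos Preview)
The paper does not prove this theorem; it is stated without proof as a ``well-known by-product of the ordinary consistency proofs of $\MM$'' attributed to Foreman, Magidor, and Shelah. So there is no proof in the paper to compare against.

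Your sketch is broadly the standard argument, and the core of your third paragraph---lift $j$ to $V[G]\to N[G^*]$, set $M=j[H_\theta^{V[G]}]$, read the $M$-generic for $j(\mathbb{P})$ off the stage-$\kappa$ factor of $G^*$, and use semiproperness of the tail to keep each $\sigma_H(\dot S)$ stationary in $N[G^*]$---is correct. Two things to clean up. First, your opening paragraph is confused: you do not locate $\mathbb{P}$ as some $\dot{Q}_\alpha$ for $\alpha<\kappa$ inside $P_\kappa$ by ``passing to a tail''; rather, the Laver property furnishes $j:V\to N$ with $j(f)(\kappa)$ a name for $\mathbb{P}$, so $\mathbb{P}$ sits at stage $\kappa$ of $j(P_\kappa)$ in $N$---which is exactly what your later paragraphs actually use. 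Second, it is $G^*$ (not $G$) whose stage-$\kappa$ factor yields the $V[G]$-generic $H$ for $\mathbb{P}$, and the $M$-generic filter is for $j(\mathbb{P})$, generated by $j[H]$, with collapse $H$. Also tighten the closure: you need $j$ to be $\lambda$-supercompact for some $\lambda\geq|H_\theta^{V[G]}|$, not merely $j(\kappa)>\theta$, to get $M\in N[G^*]$ with $|M|<j(\kappa)$ there.

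You correctly identify the real content---showing that an arbitrary $\mathbb{P}\in\SSP^{V[G]}$ is semiproper in $N[G]$, so that $j(P_\kappa)$ genuinely uses it at stage $\kappa$---as Shelah's theorem, and citing it (as the paper implicitly does by not proving the result) is appropriate.
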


\begin{theorem}\label{thm:MM++}
Assume there are class many Woodin cardinals.
Then the following are equivalent:
\begin{enumerate}
\item\label{thm:MM++-1}
$\MM^{++}$ holds.
\item\label{thm:MM++-2}
For every Woodin cardinal $\delta$ and every stationary set preserving
poset $\mathbb{P}\in V_\delta$ there is a complete embedding
$i:\mathbb{P}\to\mathbb{B}$  where 
$\mathbb{B}=\mathbb{B}(\mathbb{R}^{\aleph_2}_\delta\restriction T)$ for some
stationary set $T\in V_\delta$ such
that
\[
\Vdash_{\mathbb{P}}\mathbb{B}/i[\mathbb{P}]\text{ is stationary set preserving.}
\]
\end{enumerate}
\end{theorem}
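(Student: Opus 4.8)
The plan is to prove the two implications separately, with the hard direction being $(\ref{thm:MM++-1})\Rightarrow(\ref{thm:MM++-2})$; the converse is essentially a reflection argument using Theorem~\ref{thm:wstf}. For $(\ref{thm:MM++-2})\Rightarrow(\ref{thm:MM++-1})$, fix $\mathbb{P}\in\SSP\cap V_\delta$ for a Woodin cardinal $\delta$, take the complete embedding $i:\mathbb{P}\to\mathbb{B}=\mathbb{B}(\mathbb{R}^{\aleph_2}_\delta\restriction T)$ with stationary-set-preserving quotient, and analyze a generic $K$ for $\mathbb{B}$. It restricts to a generic $H=i^{-1}[K]$ for $\mathbb{P}$ over $V$; the quotient being $\SSP$ means that stationary subsets of $\omega_1$ in $V[H]$ stay stationary in $V[K]\subseteq M^K[K]=V_\delta[K]$. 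Now I would reflect the statement ``there is an $M\prec H_{|\mathbb{P}|^+}$ in $R_{\aleph_2}$ witnessing membership in $T_{\mathbb{P}}$'' down from $M^K$ to $V$: in $V[K]$ the embedding $j_K:V\to M^K$ has critical point above $\omega_1$, $j_K(\omega_1)=\omega_1$, and the generic filter $K$ itself provides the required $M$-generic object after collapsing; the key point is that Theorem~\ref{thm:wstf}.\ref{thm:wstf-3} lets one express the relevant first-order properties of $T_{\mathbb{P}}$ (genericity of $H$, $\SSP$-ness of the quotient as seen by a countable model) as membership of a stationary set in $G$, and stationarity of that set is exactly what we need back in $V$. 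So the stationarity of $T_{\mathbb{P}}$ follows.

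For the main direction $(\ref{thm:MM++-1})\Rightarrow(\ref{thm:MM++-2})$, fix a Woodin $\delta$ and $\mathbb{P}\in\SSP\cap V_\delta$. First I would set $T=T_{\mathbb{P}}\restriction R_{\aleph_2}$, which is stationary by $\MM^{++}$ and sits in $V_\delta$ (after intersecting with $V_\alpha$ for a suitably large $\alpha<\delta$ with $\mathbb{P}\in V_\alpha$). Elements $M\in T$ come equipped with an $M$-generic filter $H_M$ for $\mathbb{P}$ preserving the stationarity of all $\mathbb{P}$-names for stationary subsets of $\omega_1$ in $M$. Define $i:\mathbb{P}\to\mathbb{B}(\mathbb{R}^{\aleph_2}_\delta\restriction T)$ by letting $i(p)$ be the (boolean value of the) stationary set $T_p=\{M\in T: p\in H_M\}$ — one checks this is well defined modulo club, order-preserving, and maps maximal antichains of $\mathbb{P}$ to maximal antichains below $T$, using that for club-many $M$ the filter $H_M$ meets every maximal antichain of $\mathbb{P}$ lying in $M$; hence $i$ is a complete embedding. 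This is the same style of argument as in the proof of Lemma~\ref{thm:wooFA}, which the excerpt defers to here. The new content is the quotient: I must show $\Vdash_{\mathbb{P}}\mathbb{B}/i[\mathbb{P}]$ is $\SSP$. Unwinding the quotient construction, a generic $K$ for $\mathbb{B}$ over $V$ yields $j_K:V\to M^K$, and $H=i^{-1}[K]$ is a $V$-generic filter for $\mathbb{P}$; using Theorem~\ref{thm:wstf}.\ref{thm:wstf-3} together with the defining property of $T$, one identifies $j_K\restriction\mathbb{P}$ with the generic ultrapower of the stationary-preserving generic filters $H_M$, so that for every $\mathbb{P}$-name $\dot{S}$ for a stationary subset of $\omega_1$, $\sigma_H(\dot{S})$ remains stationary in $V[K]$. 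Since that holds for every such $\dot{S}$ in $V[H]$, the quotient $\mathbb{B}/i[\mathbb{P}]$ (which is what takes $V[H]$ to $V[K]$) preserves stationary subsets of $\omega_1$, as required.

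The main obstacle I anticipate is the verification that $i$ is genuinely a \emph{complete} embedding — i.e., that maximal antichains of $\mathbb{P}$ go to maximal antichains — and, relatedly, that the assignment $p\mapsto T_p$ interacts correctly with the ordering of $\mathbb{R}^{\aleph_2}_\delta$ modulo clubs. This requires the standard ``fullness'' argument that for club-many $M\in T$ every open dense subset of $\mathbb{P}$ in $M$ is met by $H_M$, and then a pressing-down/Fodor-style argument to see that no stationary $S\subseteq T$ can avoid all the $T_p$ for $p$ in a given maximal antichain. Once completeness of $i$ is in hand, translating ``$\SSP$ of the quotient'' into a statement provable from the defining property of $T_{\mathbb{P}}$ via the ultrapower characterization in Theorem~\ref{thm:wstf} should be comparatively routine, since $j_K(\omega_1)=\omega_1$ makes stationary subsets of $\omega_1$ absolute between $V[K]$ and $M^K$.
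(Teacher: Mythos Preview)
Your proposal is correct and follows essentially the same route as the paper: for $(1)\Rightarrow(2)$ you take $T=T_{\mathbb{P}}$, build the embedding from the choice function $M\mapsto H_M$, and read off stationary-set preservation of the quotient from the pointwise property $C_M\cap S_M\neq\emptyset$; for $(2)\Rightarrow(1)$ you represent the induced generic $H\in V_\delta[K]=(H_{\omega_2})^{M^K}$ as $[f]_K$ and apply \L o\'s (Theorem~\ref{thm:wstf}.\ref{thm:wstf-3}) to land a stationary set below $T_{\mathbb{P}}$ in $K$. The one place where the paper is cleaner than your outline is exactly the ``main obstacle'' you flag: rather than checking directly that $p\mapsto T_p$ preserves maximal antichains via a Fodor-type argument, the paper invokes the semantic criterion of Lemma~\ref{lem:cige} (existence of a $V$-generic filter in the extension implies existence of a complete embedding), so once you know $[\langle H_M:M\in T_{\mathbb{P}}\rangle]_G$ is $V$-generic for $\mathbb{P}$, completeness of $i(p)=\|p\in\dot H\|_{\mathbb{B}}$ is automatic.
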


The rest of this section is devoted to the proof of the above theorem.

\begin{proof}
We prove both implications as follows:
\begin{description}
\item[\ref{thm:MM++-1} implies~\ref{thm:MM++-2}]
We will show that if $G$ is $V$-generic for $\mathbb{R}_\delta$ with $T_{\mathbb{P}}\in G$
there is in $V[G]$ a $V$-generic filter $H$ for $\mathbb{P}$ such that
$\sigma_H(\dot{S})$ is stationary in $V[G]$ for all $\mathbb{P}$-names $\dot{S}$ for 
stationary subsets of 
$\omega_1$. Assume this is the case and let $\dot{H}$ be a $\mathbb{R}_\delta$-name for $H$ such that
$T_{\mathbb{P}}$ force the above property of $\dot{H}$ and $\mathbb{B}=\mathbb{B}(\mathbb{R}^{\aleph_2}_\delta\restriction T_{\mathbb{P}})$.
Then it is easy to check that the map
\begin{align*}
i & :\mathbb{P}\to\mathbb{B}
\\
& p\mapsto\|p\in\dot{H}\|_{\mathbb{B}}
\end{align*}
is a complete embedding such that 
\[
\Vdash_{\mathbb{P}}\mathbb{B}/i[\mathbb{P}]\text{ is stationary set preserving}.
\]
To define $\dot{H}$ we proceed as follows:
for each $M\in T_{\mathbb{P}}$ let $H_M\in V$ be $\pi_M[M]$-generic for 
$\pi_M(\mathbb{P})$ and such that $\sigma_{H_M}(\pi_M(\dot{S}))=S_M\in V$ is a stationary
subset of $\omega_1$ for all $\mathbb{P}$-name $\dot{S}\in M$ for a stationary subset of $\omega_1$.
Thus $[\langle H_M: M\in T_{\mathbb{P}}\rangle]_G$ is $V$-generic for $\mathbb{P}$.
Let $\dot{C}$ be a $\mathbb{R}_\delta$-name for a club subset of
$\omega_1$. As in the proof that $\mathbb{R}_\delta$ is stationary set preserving we can argue that
$\sigma_G(\dot{C})=[\langle C_M:M\prec V_\alpha\rangle]_G\in M^G$ is such that
$C_M\in V$ is a club subset of $\omega_1$  for some 
$\alpha<\delta$ and for all $M\prec V_\alpha$. Then 
\[
\sigma_G(\dot{C})\cap\sigma_H(\dot{S})=
[\langle C_M\cap S_M:M\in T_{\mathbb{P}}^{V_{\alpha}}\rangle]_G\neq \emptyset
\]
This shows that $[\langle H_M:M\in T_{\mathbb{P}}\rangle]_{\dot{G}}$ is the desired $\mathbb{R}_\delta$-name
$\dot{H}$, given that $\dot{G}$ is the canonical $\mathbb{R}_\delta$-name for a $V$-generic filter for
$\mathbb{R}_\delta$.

\item
\textbf{\ref{thm:MM++-2} implies~\ref{thm:MM++-1}.}
 
Let $\dot{G}$ be the canonical $\mathbb{R}_\delta$-name for a $V$-generic filter for
$\mathbb{R}_\delta$.
Let $T\in\mathbb{R}_\delta$ be a condition such that
$\mathbb{P}$ completely embeds into $\mathbb{B}=\mathbb{B}(\mathbb{R}_\delta\restriction T)$ via $i$ and
\begin{align*}
\Vdash_{\mathbb{P}}\mathbb{B}/i[\mathbb{P}]\text{ is stationary set preserving}
\end{align*}
 
Let $\dot{G}$ be the canonical name for the $\mathbb{R}_\delta$-generic filter and $\dot{H}=i^{-1}[\dot{G}]$.
Then $i(p)=\|p\in\dot{H}\|_{\mathbb{B}}$.

Now notice that $\mathbb{P}\in V_\delta$ and each 
$\mathbb{P}$-name $\dot{S}$ for a stationary subset of 
$\omega_1$ is in $V_\delta$ since it is given by $\omega_1$-many maximal antichains
of $\mathbb{P}$. 

Thus if $G$ is $V$-generic for $\mathbb{R}_\delta$ with $T\in G$,
$H=\sigma_G(\dot{H})\in V_\delta[G]$ is $V$-generic for $\mathbb{P}$ and is such that
$\sigma_{H}(\dot{S})\in V_{\delta}[G]$ is stationary in $V[G]$ for all names $\dot{S}\in V^{\mathbb{P}}$ for stationary subsets of $\omega_1$.
Since $V_\delta[G]=(H_{\omega_2})^{M^G}$, $H\in M^G$, 
so $H=[f]_G$ for some $f:P(V_\alpha)\to P(\mathbb{P})$. 
It is possible to check that for some $\alpha<\delta$
\begin{align*}
S= &\{M\prec V_\alpha: f(M)=H_M\text{ is a $\pi_M[M]$-generic filter for 
$\pi_M(\mathbb{P})$ such that}
\\
& \sigma_{H_M}(\pi_M(\dot{S}))\text{ is stationary for all names $\dot{S}\in V^{\mathbb{P}}\cap M$} 
\\
& \text{for stationary subsets of $\omega_1$}\}\in G
\end{align*}
In particular $S\leq T_{\mathbb{P}}$ is stationary and we are done.
\end{description}
\end{proof}

\section{Absoluteness of the theory of $H_{\aleph_2}$ in models of $\MM^{++}$}
\label{sec:mainth}

In this section we prove Theorem~\ref{thm:absMMLORD-0}. We leave to the reader to convert it into a proof of Theorem~\ref{thm:absMMLORD-1}


\begin{theorem}\label{thm:absMMLORD}
Assume $\MM^{++}$ holds in $V$ and there are class many Woodin cardinals.
Then the $\Pi_2$-theory of $H_{\aleph_2}$ with parameters cannot be changed by 
stationary set preserving forcings which preserve $\BMM$.
\end{theorem}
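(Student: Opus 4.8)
The plan is to run a variant of Woodin's absoluteness argument, using the stationary tower in place of the countable collapse that appears in the Cohen Absoluteness Lemma. Fix a $\Pi_2$-formula $\forall x\,\exists y\,\phi(x,y,p)$ with $\phi\in\Sigma_0$ and a parameter $p\in H_{\aleph_2}^V$, and suppose $P$ is stationary set preserving, $\Vdash_P\BMM$, and $V^P\models [H_{\aleph_2}\models \exists y\,\phi(a,y,p)]$ for some $a\in H_{\aleph_2}^{V^P}$ witnessing a potential failure of the statement after forcing with $P$; the real content is the converse direction, so I will instead assume $H_{\aleph_2}^V\models\forall x\exists y\,\phi$ and show this persists, and separately that a $\Sigma_2$ statement true in $V^P$ reflects back to $V$. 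First I would fix a Woodin cardinal $\delta>|P|$ and invoke Theorem~\ref{thm:MM++} (the direction \ref{thm:MM++-1}$\Rightarrow$\ref{thm:MM++-2}): from $\MM^{++}$ we get a complete embedding $i:P\to\mathbb{B}=\mathbb{B}(\mathbb{R}^{\aleph_2}_\delta\restriction T)$ with $\Vdash_P \mathbb{B}/i[P]$ stationary set preserving. Then forcing with $P$ followed by the quotient is the same as forcing with $\mathbb{R}^{\aleph_2}_\delta\restriction T$, and by Theorem~\ref{thm:wstf} the generic embedding $j_G:V\to M^G$ satisfies $(H_{\aleph_2})^{V[G]}=V_\delta[G]=(H_{\aleph_2})^{M^G}$, with $j_G\restriction H_{\aleph_2}^V$ the identity and $H_{\aleph_2}^V\prec H_{j_G(\aleph_2)}^{V[G]}$.

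Next I would factor a $V$-generic $G$ for $\mathbb{R}^{\aleph_2}_\delta\restriction T$ as $G=H*G'$ where $H=i^{-1}[G]$ is $V$-generic for $P$ and $G'$ is $V[H]$-generic for the quotient. Since the quotient is stationary set preserving over $V[H]$ and $V[H]=V^P$ satisfies $\BMM$, the further extension $V[G]$ has the same $\Sigma_1$-theory of $H_{\aleph_2}$ over parameters in $H_{\aleph_2}^{V[H]}$ as $V[H]$ does — this is exactly where the hypothesis "$P$ preserves $\BMM$" is used, via $\BFA_{\aleph_1}$ of the quotient, i.e.\ Cohen-type absoluteness $H_{\aleph_2}^{V[H]}\prec_{\Sigma_1} H_{\aleph_2}^{V[G]}$. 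Combining with $H_{\aleph_2}^V\prec H_{\aleph_2}^{M^G}=H_{\aleph_2}^{V[G]}$ from the elementarity of $j_G$, I would chase a witness around the square: a $\Sigma_2$ fact $\exists x\,\psi(x,p)$ (with $\psi$ a $\Pi_1$ statement about $H_{\aleph_2}$) holding in $H_{\aleph_2}^{V[H]}$ is, by the $\Sigma_1$-step, witnessed in $H_{\aleph_2}^{V[G]}=H_{\aleph_2}^{M^G}$ by some $b$ with $\psi(b,p)$ holding there; since $\psi$ is $\Pi_1$ over $H_{\aleph_2}$ and $H_{\aleph_2}^V\prec H_{\aleph_2}^{M^G}$ (indeed fully elementary), the existential reflects down to $V$, giving $H_{\aleph_2}^V\models\exists x\,\psi(x,p)$. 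This yields $H_{\aleph_2}^V\prec_{\Sigma_2} H_{\aleph_2}^{V^P}$ after checking the $\Pi_2$/downward direction symmetrically (a $\Pi_2$ statement true in $V$ passes up to $V[G]$ by elementarity of $j_G$, then restricts to $V[H]$ because $\Pi_2$ over $H_{\aleph_2}$ is downward absolute from $H_{\aleph_2}^{V[G]}$ to the inner structure $H_{\aleph_2}^{V[H]}$ once one knows the $\Sigma_1$-elementarity $H_{\aleph_2}^{V[H]}\prec_{\Sigma_1}H_{\aleph_2}^{V[G]}$, which upgrades a $\Pi_2$-failure in $V[H]$ to one in $V[G]$).

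The main obstacle, and the point requiring care, is the interaction between the two absoluteness steps: $j_G$ gives full elementarity between $H_{\aleph_2}^V$ and $H_{\aleph_2}^{V[G]}$, but the passage from $V[H]=V^P$ to $V[G]$ is only $\Sigma_1$-elementary (that is all $\BMM$ for the quotient buys us), so one must verify that a $\Sigma_2$ statement and its $\Pi_2$ negation each survive the correct half of the journey. Concretely: for a $\Sigma_2$ sentence $\theta=\exists x\,\forall y\,\phi$ true in $H_{\aleph_2}^{V^P}$, the inner witness $x$ may not stay a witness in $H_{\aleph_2}^{V[G]}$ because the $\forall y$ part could fail upon adding the quotient generic — this is why one needs the quotient to be stationary set preserving and $\BMM$-preserving, so that $H_{\aleph_2}^{V^P}\prec_{\Sigma_1}H_{\aleph_2}^{V[G]}$, which suffices precisely because $\theta$ is $\Sigma_2$: rewrite it as $\exists$ of a $\Pi_1$-over-$H_{\aleph_2}$ statement, and $\Sigma_1$-elementarity preserves "there exists something with a $\Pi_1$ property" is false — rather one argues that the $\Pi_1$ property, being $\Pi_1$, is \emph{downward} absolute, so a witness in the smaller model stays a witness; hence the correct and only needed step is that $\Sigma_1$-elementarity upward combined with $\Pi_1$ downward-absoluteness yields $\Sigma_2$-elementarity upward, and then $j_G$'s full elementarity carries it back to $V$. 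I would isolate this bookkeeping as a short lemma: if $H_{\aleph_2}^{V}\prec H_{\aleph_2}^{V[G]}$ and $H_{\aleph_2}^{V^P}\prec_{\Sigma_1}H_{\aleph_2}^{V[G]}$ with $H_{\aleph_2}^{V^P}\subseteq H_{\aleph_2}^{V[G]}$ and $H_{\aleph_2}^{V}\subseteq H_{\aleph_2}^{V^P}$, then $H_{\aleph_2}^{V}\prec_{\Sigma_2}H_{\aleph_2}^{V^P}$, and verify its elementary proof by the witness-chasing just described.
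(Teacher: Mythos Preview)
Your approach is correct and essentially identical to the paper's: invoke Theorem~\ref{thm:MM++} to embed $P$ into the stationary tower with stationary-set-preserving quotient, use the tower embedding for full elementarity $H_{\aleph_2}^V\prec H_{\aleph_2}^{V[G]}$, use $\BMM$ in $V[H]$ applied to the quotient for $H_{\aleph_2}^{V[H]}\prec_{\Sigma_1}H_{\aleph_2}^{V[G]}$, and sandwich. Your closing lemma is exactly the right abstraction and is true.

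One point of confusion in your write-up: your two explicit direction-checks are in fact the \emph{same} direction. ``$\Sigma_2$ true in $H_{\aleph_2}^{V[H]}$ reflects to $H_{\aleph_2}^V$'' and ``$\Pi_2$ true in $H_{\aleph_2}^V$ persists to $H_{\aleph_2}^{V[H]}$'' are contrapositives of one another, so you have verified only half of $\prec_{\Sigma_2}$. The half you did not write out is ``$\Sigma_2$ true in $H_{\aleph_2}^V$ persists to $H_{\aleph_2}^{V[H]}$'': take a witness $a\in H_{\aleph_2}^V$ with $H_{\aleph_2}^V\models\forall y\,\phi(a,y,p)$; by full elementarity this $\Pi_1$ fact holds in $H_{\aleph_2}^{V[G]}$; since $a,p\in H_{\aleph_2}^{V[H]}$ and $H_{\aleph_2}^{V[H]}$ is a transitive substructure of $H_{\aleph_2}^{V[G]}$, the $\Pi_1$ fact goes down to $H_{\aleph_2}^{V[H]}$. (Note this direction does not even need $\BMM$; transitivity suffices, which is how the paper handles it.) Your stated lemma is correct as written and both halves follow from its hypotheses, so this is an expository slip rather than a mathematical gap.
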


\begin{proof}
Assume $V$ models $\MM^{++}$ and let $P\in M$ be such that
$V^P$ models $\BMM$.

Let $\delta$ be a Woodin cardinal larger than $|P|$.
By Theorem~\ref{thm:MM++} there is a complete embedding 
$i:P\to Q=\mathbb{R}_\delta\restriction T_P$ for some
stationary set $T_P\in V_\delta$ such
that
\[
\Vdash_{P} Q/i[P]\text{ is stationary set preserving}.
\]

Now let $G$ be $V$-generic for $Q$ and $H=i^{-1}[G]$ be $V$ generic for $P$.
Then $V\subset V[H]\subset V[G]$ and $V[G]$ is a 
generic extension of $V[H]$ by a forcing which is
stationary set preserving in $V[H]$.
Moreover by Woodin's theorem on stationary tower forcing~\ref{thm:wstf}, 
we have that $H_{\aleph_2}^V\prec H_{\aleph_2}^{V[G]}$.

We show that 
\[
H_{\aleph_2}^V\prec_{\Sigma_2} H_{\aleph_2}^{V[H]}.
\]
This will prove the Theorem, modulo standard forcing arguments.

We have to prove the following for any $\Sigma_0$-formula $\phi(x,y,z)$:
\begin{enumerate}
\item \label{prf:1}
If 
\[
H_{\aleph_2}^V\models \exists y\forall x\phi(x,y,p)
\]
for some 
$p\in H_{\aleph_2}^V$, then
also 
\[
H_{\aleph_2}^{V[H]}\models\exists y\forall x\phi(x,y,p).
\]
\item \label{prf:2}
If 
\[
H_{\aleph_2}^V\models \forall y\exists x\phi(x,y,p)
\]
for some 
$p\in H_{\aleph_2}^V$, then
also 
\[
H_{\aleph_2}^{V[H]}\models\forall y\exists x\phi(x,y,p).
\]
\end{enumerate}

To prove~\ref{prf:1} we note that
for some $q\in H_{\aleph_2}^V$ we have that 
\[
H_{\aleph_2}^V\models\forall x\phi(x,q,p).
\]
Then, since 
\[
H_{\aleph_2}^V\prec H_{\aleph_2}^{V[G]},
\]
we have that 
\[
H_{\aleph_2}^{V[G]}\models \forall x\phi(x,q,p).
\] 
In particular, since $q,p\in H_{\aleph_2}^{V[H]}$ and
$H_{\aleph_2}^{V[H]}$ is a transitive substructure of $H_{\aleph_2}^{V[G]}$,
we get that 
\[
H_{\aleph_2}^{V[H]}\models \forall x\phi(x,q,p)
\] 
as well. 
The conclusion now follows.

To prove~\ref{prf:2} we note that, since 
\[
H_{\aleph_2}^V\prec H_{\aleph_2}^{V[G]},
\]
we have that 
\[
H_{\aleph_2}^{V[G]}\models \forall y\exists x\phi(x,y,p). 
\]
In particular we have that for any $q\in H_{\aleph_2}^{V[H]}$ 
we have that 
\[
H_{\aleph_2}^{V[G]}\models  
\exists x\phi(x,q,p). 
\]
Now, since $V[H]$ models $\BMM$ and $V[G]$ is an
extension of $V[H]$ by a stationary set preserving forcing, we get that
\[
H_{\aleph_2}^{V[H]}\prec_{\Sigma_1}H_{\aleph_2}^{V[G]}.
\]
In particular we can conclude that 
\[
H_{\aleph_2}^{V[H]}\models\exists x\phi(x,q,p)
\] 
for all $q\in H_{\aleph_2}^{V[H]}$, from which the desired conclusion follows.

The proof of the theorem is completed.
\end{proof}

\section{Woodin's absoluteness results for $H_{\aleph_1}$}\label{sec:WooHC}

Motivated by the results of the previous section we prove the following theorem:

\begin{theorem}\label{thm:WooHC}
Assume there are class many Woodin cardinals. Then the theory of 
$H_{\aleph_1}$ is invariant with respect to set forcing.
\end{theorem}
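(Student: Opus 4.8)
The plan is to mimic the argument used for Theorem~\ref{thm:absMMLORD}, but in the much simpler setting of $H_{\aleph_1}$, where no forcing axiom is needed because $\Gamma_{\aleph_1}$ is the class of \emph{all} posets. First I would reduce the statement to the assertion that if $V$ models ``there are class many Woodin cardinals'' and $P$ is any poset, then $H_{\aleph_1}^V\prec H_{\aleph_1}^{V^P}$. Since the theory of $H_{\aleph_1}$ is first-order and $H_{\aleph_1}$ is closed under the relevant operations, full elementarity (not just $\Sigma_1$ or $\Sigma_2$) is what we want here, and the class-many-Woodins hypothesis is preserved by set forcing, so symmetry of the relation will follow once one inclusion direction is proved for arbitrary $P$.

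Next I would fix a poset $P$ and a Woodin cardinal $\delta>|P|$, and let $G$ be $V$-generic for $P$. The key move is to go into $V[G]$, where $\delta$ is still Woodin, and force with the stationary tower $\mathbb{R}^{\aleph_1}_\delta$ (or rather the full tower $\mathbb{Q}^{V[G]}_{<\delta}$ of Woodin, which for the $H_{\aleph_1}$ analysis amounts to using $R_{\aleph_1}$-stationary sets, i.e.\ $P_{\omega_1}$); since $P\in V_\delta$, the trivial condition of the tower forces the existence of a $V$-generic filter for $P$, so $P$ regularly embeds into the tower in $V[G]$. Picking $H$ generic for the tower over $V[G]$, we get embeddings $j:V\to M\subseteq V[G][H]$ and $k:V[G]\to N\subseteq V[G][H]$ with $\crit(j),\crit(k)$ large, $j\restriction H_{\aleph_1}^V=\id$, $k\restriction H_{\aleph_1}^{V[G]}=\id$, and by Theorem~\ref{thm:wstf} the models $M$ and $N$ compute $H_{\aleph_1}$ correctly: $H_{\aleph_1}^M=H_{\aleph_1}^{V[G][H]}=H_{\aleph_1}^N$. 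Hence $H_{\aleph_1}^V\prec H_{\aleph_1}^M=H_{\aleph_1}^{V[G][H]}=H_{\aleph_1}^N\succ H_{\aleph_1}^{V[G]}$, and since $H_{\aleph_1}^V\subseteq H_{\aleph_1}^{V[G]}$ as substructures of the common $H_{\aleph_1}^{V[G][H]}$, we conclude $H_{\aleph_1}^V\prec H_{\aleph_1}^{V[G]}$ by a standard sandwich argument (any two elementary substructures of a common model, one contained in the other, form an elementary pair).

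I would then observe that this gives $H_{\aleph_1}^V\prec H_{\aleph_1}^{V^P}$ for every poset $P$, and since ``there are class many Woodin cardinals'' persists to $V^P$, running the same argument inside $V^P$ relative to a further extension yields that the theory of $H_{\aleph_1}$ with parameters in $H_{\aleph_1}^V$ is the same in $V$ and in any iterated set-forcing extension; in particular it is invariant under set forcing. The main obstacle, and the only point requiring genuine care, is the passage through the stationary tower: one must invoke the correct version of Woodin's theorem (the tower $\mathbb{Q}_{<\delta}$ built from stationary subsets of $P_{\omega_1}(V_\alpha)$, rather than $\mathbb{R}^{\aleph_2}_\delta$) so that the generic embedding has critical point above $\omega_1$ and $M^G$ is closed under $\omega$-sequences, guaranteeing $(H_{\aleph_1})^{M^G}=(H_{\aleph_1})^{V[G][H]}$; the rest is bookkeeping with elementarity. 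Alternatively, and perhaps more cleanly, one can cite Theorem~\ref{thm:wstf} with $\lambda=\aleph_1$ directly, noting that $j_G\restriction H_{\aleph_1}^V$ is the identity and witnesses $H_{\aleph_1}^V\prec H_{\aleph_1}^{V[G]}$, and then feed $P$-generic extensions into this machine; I would present it this way to keep the proof short and self-contained within the framework already set up in Subsection~\ref{sec:stattow}.
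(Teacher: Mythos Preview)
Your sandwich argument has a genuine gap: forcing with $(\mathbb{R}^{\aleph_1}_\delta)^{V[G]}$ over $V[G]$ produces the generic embedding $k:V[G]\to N$, but it does \emph{not} produce any embedding $j:V\to M$. The ultrapower is of $V[G]$, not of $V$, and nothing in your setup extracts from the $V[G]$-generic $H$ a filter that is generic over $V$ for the tower of $V$ (the conditions of the tower of $V$ need not even remain stationary in $V[G]$). Your remark that ``$P$ regularly embeds into the tower in $V[G]$'' is true but irrelevant: it only says that the tower generic over $V[G]$ manufactures \emph{some} $V[G]$-generic for $P$, which has nothing to do with the specific $G$ you started from and gives no handle on $H_{\aleph_1}^V$. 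Your alternative at the end is circular: Theorem~\ref{thm:wstf} with $\lambda=\aleph_1$ yields $H_{\aleph_1}^V\prec H_{\aleph_1}^{V[G']}$ only when $G'$ is generic for the \emph{tower of $V$}; to transfer this to an arbitrary $P$-extension $V[H']\subseteq V[G']$ you would still need $H_{\aleph_1}^{V[H']}\prec H_{\aleph_1}^{V[G']}$, which is precisely the statement to be proved.

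The paper circumvents this by induction on $n$. The base case $\Sigma_1$ is Cohen's absoluteness Lemma~\ref{Lem:CohAbs} (no large cardinals needed). For the inductive step one forces with the tower of $V$, not of $V[G]$: fix a Woodin $\delta$ with $P\in V_\delta$, take the complete embedding $i:P\to\mathbb{R}^{\aleph_1}_\delta$ from Theorem~\ref{thm:forcingaxioms-stateaffairs}.\ref{thm:forcingaxioms-stateaffairs-1}, let $G$ be $V$-generic for the tower and $H=i^{-1}[G]$. Then Theorem~\ref{thm:wstf} gives $H_{\aleph_1}^V\prec H_{\aleph_1}^{V[G]}$, while the inductive hypothesis applied once in $V$ (to $P$) and once in $V[H]$ (to the quotient, noting $V[H]$ still has class many Woodins) gives $H_{\aleph_1}^V\prec_{\Sigma_n}H_{\aleph_1}^{V[H]}\prec_{\Sigma_n}H_{\aleph_1}^{V[G]}$. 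These three relations together let one push $\Sigma_n$ up to $\Sigma_{n+1}$ between $H_{\aleph_1}^V$ and $H_{\aleph_1}^{V[H]}$ by the two-case argument familiar from Theorem~\ref{thm:absMMLORD}; they are \emph{not} enough to jump to full elementarity in one step, which is exactly why the induction is necessary. A one-shot sandwich of the sort you sketch does exist, but it requires building a \emph{common} overmodel $W$ with both $H_{\aleph_1}^V\prec H_{\aleph_1}^W$ and $H_{\aleph_1}^{V[G]}\prec H_{\aleph_1}^W$, and arranging that typically costs a second Woodin cardinal and a comparison through a symmetric collapse; it is not what you wrote.
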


\begin{proof}

We prove by induction on $n$ the following Lemma, of which the Theorem is an 
immediate consequence:

\begin{lemma}\label{lem:ABSlemR}
Assume $V$ is a model of $\ZFC$ in which
there are class many Woodin cardinals. 
Let $P\in V$ be a forcing notion. 

Then for all $n$, 
$H_{\aleph_1}^V\prec_{\Sigma_n}H_{\aleph_1}^{V^P}$.
\end{lemma}

\begin{proof}
By Cohen's absoluteness Lemma~\ref{Lem:CohAbs}, we already know that 
for all models $M$ of
$\ZFC$ and all forcing $P\in M$
\[
H_{\aleph_1}^M\prec_{\Sigma_1}H_{\aleph_1}^{M^P}.
\]
Now assume that for all models $M$ of
$\ZFC+$\emph{there are class many Woodin cardinals} 
and all $P\in M$
we have shown that
\[
H_{\aleph_1}^M\prec_{\Sigma_n}H_{\aleph_1}^{M^P}.
\]
First observe that $M^P$ is still a model of 
$\ZFC+$\emph{there are class many Woodin cardinals}. 
Now pick $V$ an arbitrary model of 
$\ZFC+$\emph{there are class many Woodin cardinals} 
and $P\in V$ a forcing notion.

Let $\delta\in V$ be a Woodin cardinal in $V$ such that
$P\in V_\delta$.

To simplify the argument we assume $V$ is transitive and there is a
$V$-generic filter $G$ for $\mathbb{R}^{\aleph_1}_\delta$ 
(we leave to the reader to remove these unnecessary assumptions).

Then, since $\FA_{\aleph_0}(P)$ holds in $V$ and 
$P\in V_\delta$, by 
Theorem~\ref{thm:forcingaxioms-stateaffairs}.\ref{thm:forcingaxioms-stateaffairs-1}
there is in $V$ a complete embedding $i:P\to \mathbb{R}^{\aleph_1}_\delta$.
Let $H=i^{-1}[G]$. Then by our inductive assumptions applied to $V$ 
(with respect to $V[H]$) and to $V[H]$ (with respect to $V[G]$) we have 
that:
\[
H_{\aleph_1}^V\prec_{\Sigma_n}H_{\aleph_1}^{V[H]}
\prec_{\Sigma_n}H_{\aleph_1}^{V[G]}.
\]
By Woodin's work on the stationary tower forcing we also know that 
\[
H_{\aleph_1}^V\prec H_{\aleph_1}^{V[G]}.
\]
Now we prove that 
\[
H_{\aleph_1}^V\prec_{\Sigma_{n+1}}H_{\aleph_1}^{V[H]}.
\]
Since this argument holds for any $V$, $P$ and $G$, 
the proof will be completed.

We have to prove the following for any $\Sigma_n$-formula $\phi(x,z)$
and any $\Pi_n$-formula $\psi(x,z)$:
\begin{enumerate}
\item \label{prf:1-bis} 
If 
\[
H_{\aleph_1}^V\models\forall x\phi(x,p)
\] 
for some 
$p\in \mathbb{R}^V$, then
also 
\[
H_{\aleph_1}^{V[H]}\models \forall x\phi(x,p).
\]
\item \label{prf:2-bis} 
If 
\[
H_{\aleph_1}^V\models \exists x\psi(x,p)
\] 
for some 
$p\in \mathbb{R}^V$, then
also 
\[
H_{\aleph_1}^{V[H]}\models \exists x\psi(x,p).
\]

\end{enumerate}

To prove~\ref{prf:1-bis} we note that, since $H_{\aleph_1}^V\prec H_{\aleph_1}^{V[G]}$,
we have that 
\[
H_{\aleph_1}^{V[G]}\models \forall x\phi(x,p). 
\]
In particular we have that for any $q\in H_{\aleph_1}^{V[H]}$ 
we have that $H_{\aleph_1}^{V[G]}$ models that 
$\phi(q,p)$. Now, since by inductive assumptions
\[
H_{\aleph_1}^{V[H]}\prec_{\Sigma_n} H_{\aleph_1}^{V[G]},
\]
we get that
\[
H_{\aleph_1}^{V[H]}\models\phi(q,p)
\] 
for all $q\in H_{\aleph_1}^{V[H]}$, from which the desired conclusion follows.

To prove~\ref{prf:2-bis} we note that
for some $q\in H_{\aleph_1}^V$ we have that 
\[
H_{\aleph_1}^V\models \psi(q,p).
\]
Then, since by inductive assumptions we have that
\[
H_{\aleph_1}^V\prec_{\Sigma_n} H_{\aleph_1}^{V[H]},
\]
we conclude that 
\[
H_{\aleph_1}^{V[H]}\models \psi(q,p).
\] 
The conclusion now follows.

The lemma is now completely proved.
\end{proof}

The Theorem is proved.
\end{proof}

\begin{remark}
Theorem~\ref{thm:WooHC} has a 
weaker conclusion than~\cite[Theorem 3.1.7]{LAR04} where
it is shown that
in the presence of class many inaccessible limits of Woodin cardinals, 
the first order theory of $L(P_{\omega_1}\Ord)$ is invariant
with respect to set forcing. However in Theorem~\ref{thm:WooHC}
we have slightly weakened the large cardinal hypothesis with respect to 
Woodin's~\cite[Theorem 3.1.7]{LAR04}.

We had to weaken the conclusion of Theorem~\ref{thm:WooHC} with respect
to~\cite[Theorem 3.1.7]{LAR04}
since we cannot replace $H_{\aleph_1}$ with $L(\mathbb{R})$ (or $L(P_{\omega_1}\Ord)$)
in the proof of the above Lemma.
The reason is that any element of $L(\mathbb{R})$ is defined by an arbitrarily large ordinal and a real
and the ordinal may be moved by $j_G$, where 
$j_G:V\to M^G$ is the ultarpower embedding
living in $V[G]$ and induced by $G$. 
In particular we have that $j_G\restriction H_{\omega_1}^V$ is the
identity and witnesses that  
\[
H_{\aleph_1}^V\prec H_{\aleph_1}^{V[G]},
\]
but $j_G$ may not witness that 
\[
L(\mathbb{R})^{V}\prec L(\mathbb{R})^{V[G]},
\]
which is what we would need in order to perform the type of argument we 
performed in the proof of the Lemma. 
\end{remark}

\section{Questions and open problems}\label{sec:comop}

\subsection{A conjecture on $\MM^{++}$ and $\Gamma_{\aleph_2}$-logic.}
We conjecture the following:
\begin{conjecture}
Assume $V$ is a model of $\MM^{++}+$\emph{large cardinals}.
Then for every $P\in V$ which preserves $\MM^{++}$
\[
H_{\aleph_2}^V\prec H_{\aleph_2}^{V^P},
\] 
\end{conjecture}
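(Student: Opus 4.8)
The plan for attacking the Conjecture is to combine the two arguments already developed in the paper: the ``sandwiching'' of Theorem~\ref{thm:absMMLORD}, which places the $P$-extension between $V$ and a stationary tower extension, and the bootstrapping on formula complexity used in the proof of Theorem~\ref{thm:WooHC}. So fix $P$ preserving $\MM^{++}$. First one checks that $P\in\SSP$; this uses that $\MM$, being preserved, pins down enough of the combinatorics of $\omega_1$ and $\omega_2$ (in particular $2^{\aleph_0}=\aleph_2$ is preserved, so $\omega_1$ and $\omega_2$ are preserved), from which one argues that $P$ destroys no stationary subset of $\omega_1$. Hence Theorem~\ref{thm:MM++} applies: for every Woodin cardinal $\delta>|P|$ there is a complete embedding $i\colon P\to\mathbb{B}=\mathbb{B}(\mathbb{R}_\delta\restriction T_P)$ with $\Vdash_P\mathbb{B}/i[P]$ stationary set preserving. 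Fixing $G$ generic for $\mathbb{B}$ and $H=i^{-1}[G]$, we obtain $V\subseteq V[H]\subseteq V[G]$ with $V[G]$ an $\SSP$-extension of $V[H]$, and $H_{\aleph_2}^V\prec H_{\aleph_2}^{V[G]}$ by Theorem~\ref{thm:wstf}.

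The bootstrapping: I would prove by induction on $n$ the statement that for every $W\models\ZFC+\MM^{++}+$``there are class many Woodin cardinals'' and every $Q\in W$ preserving $\MM^{++}$ one has $H_{\aleph_2}^W\prec_{\Sigma_n}H_{\aleph_2}^{W^Q}$. For $n\le 2$ this is Theorem~\ref{thm:absMMLORD} (note that $\MM^{++}$-preservation entails $\BMM$-preservation). For the step from $n$ to $n+1$, take $W=V$, $Q=P$, and $V[H],V[G]$ as above. Applying the inductive hypothesis to the step $V\leadsto V[H]$ gives $H_{\aleph_2}^V\prec_{\Sigma_n}H_{\aleph_2}^{V[H]}$; applying it to the step $V[H]\leadsto V[G]$ gives $H_{\aleph_2}^{V[H]}\prec_{\Sigma_n}H_{\aleph_2}^{V[G]}$; together with $H_{\aleph_2}^V\prec H_{\aleph_2}^{V[G]}$ and the elementary ``sandwich'' fact (if $A\prec_{\Sigma_n}B$, $B\prec_{\Sigma_n}C$ and $A\prec_{\Sigma_{n+1}}C$, then $A\prec_{\Sigma_{n+1}}B$) this yields $H_{\aleph_2}^V\prec_{\Sigma_{n+1}}H_{\aleph_2}^{V[H]}$, i.e. $H_{\aleph_2}^V\prec_{\Sigma_{n+1}}H_{\aleph_2}^{V^P}$. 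Taking the union over $n$ proves the Conjecture.

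The main obstacle lies in the phrase ``applying the inductive hypothesis to the step $V[H]\leadsto V[G]$''. This requires $V[H]\models\MM^{++}+$``class many Woodin cardinals'' — which is fine, since $P$ preserves $\MM^{++}$ and Woodin cardinals above $\delta$ survive the small forcing $P$ — but also that the quotient $\mathbb{B}/i[P]$, viewed as a forcing over $V[H]$, preserve $\MM^{++}$; equivalently, that $V[G]\models\MM^{++}$, i.e. that stationary tower forcing below a Woodin cardinal preserves $\MM^{++}$. In Theorem~\ref{thm:WooHC} the analogous point is free, because ``there are class many Woodin cardinals'' is absolute for set forcing; but $\MM^{++}$ (indeed already $\MM$, or even $\BMM$) is not obviously preserved by $\mathbb{R}_\delta$, and to my knowledge this is open. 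I expect it to be the crux. One might try to circumvent it by: (i) isolating a sufficiently weak ``local'' consequence of $\MM^{++}$ that is provably preserved by $\mathbb{R}_\delta$ and still suffices both to run Theorem~\ref{thm:MM++} and to feed the sandwich; (ii) refining the Foreman--Magidor--Shelah iteration so that in the resulting model $V$ every relevant quotient $\mathbb{B}/i[P]$ is again forcing equivalent to a revised countable support iteration of semiproper posets, for which $\MM^{++}$-preservation can be verified directly; or (iii) replacing the detour through $V[G]$ by a direct amalgamation of $V$ and $V^P$ over a common stationary-set-preserving, $\MM^{++}$-preserving extension, obtained from a second Woodin cardinal and a second application of Theorem~\ref{thm:MM++} carried out inside $V^P$. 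Each of these genuinely uses the full strength of the ``large cardinals'' hypothesis in the Conjecture, rather than just class many Woodin cardinals, which is presumably why the Conjecture is phrased with that stronger assumption.
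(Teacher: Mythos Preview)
Your analysis is spot-on and in fact coincides with the paper's own discussion. The statement you are attempting to prove is stated in the paper as a \emph{conjecture}, not a theorem; no proof is given. Immediately after the conjecture the paper sketches exactly the bootstrapping strategy you describe---combining the sandwich argument of Theorem~\ref{thm:absMMLORD} with the inductive ascent on formula complexity of Lemma~\ref{lem:ABSlemR}---and then isolates precisely the obstacle you identified: to push the induction from $\Sigma_n$ to $\Sigma_{n+1}$ one needs $H_{\aleph_2}^{V[H]}\prec_{\Sigma_n}H_{\aleph_2}^{V[G]}$ for $n\geq 2$, and this would follow if $V[G]$ satisfied $\BMM$ (equivalently, in your formulation, if the quotient $\mathbb{B}/i[P]$ preserved $\MM^{++}$, or at least $\BMM$, over $V[H]$). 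The paper is explicit that this is not known and in fact writes ``we do not expect this to be the case'' that $V[G]\models\BMM$; it concludes that ``some new idea is required to prove (or disprove) this conjecture.''

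So there is no discrepancy to report: you have independently arrived at the paper's own assessment of both the natural line of attack and its point of failure. Your proposed workarounds (i)--(iii) go beyond what the paper says and are genuine research directions rather than repairs of a known argument; none of them is carried out in the paper.
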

There is a major obstacle in performing the arguments of  
Lemma~\ref{lem:ABSlemR} in combination with the proof
of Theorem~\ref{thm:absMMLORD} to prove this conjecture.

Assume $H$ is $V$-generic for $P$ and $G$ is $V$-generic for
$\mathbb{R}^{\aleph_2}_\delta$ so that:
\begin{itemize}
\item
$V\models\MM^{++}$, 
\item
$V[H]\models\MM^{++}$,
\item
$V[G]$ is an extension of $V$ and of $V[H]$ by a stationary set preserving forcing,
\item
$H_{\aleph_2}^V\prec_{\Sigma_1} H_{\aleph_2}^{V[H]}
\prec_{\Sigma_1} H_{\aleph_2}^{V[G]}$,
\item
$H_{\aleph_2}^V
\prec H_{\aleph_2}^{V[G]}$,
\end{itemize}
From these data following the proof of Theorem~\ref{thm:absMMLORD}
we can infer
$H_{\aleph_2}^V\prec_{\Sigma_2} H_{\aleph_2}^{V[H]}$, but we cannot infer
$H_{\aleph_2}^{V[H]}\prec_{\Sigma_2} H_{\aleph_2}^{V[G]}$ (which is what allows us to 
perform the next step in Lemma~\ref{lem:ABSlemR}) 
because we cannot prove that $V[G]$ is a model of $\BMM$ (and we do not expect this to
be the case).

Thus some new idea is required to prove (or disprove) this conjecture.



\subsection{What is the relation between $\MM^{++}$ and axiom $(*)$?}

It is well known that Woodin's  $(*)$-axiom is not compatible with the existence
of a well order of $P(\omega_1)$ definable in $H_{\aleph_2}$ without 
parameters.
On the other hand Larson has shown that $\MM^{+\omega}$ is 
compatible with the existence of a well-order of
$P(\omega_1)$ definable in $H_{\omega_2}$ without parameters~\cite{larson-dwo}. 
If we inspect Larson's result, we see that Larson's well order is neither 
$\Pi_2$-definable nor
$\Sigma_2$-definable over $H_{\omega_2}$.
Thus Theorem~\ref{thm:absMMLORD-1}
does not prove that Larson's well-order
can be defined in all models of $\MM^{++}$. Whether $\MM^{++}$ can imply or deny
axiom $(*)$ is an interesting open problem. Larson's result already shows that
any version of $\MM^{+\alpha}$ strictly weaker than $\MM^{++}$ neither denies nor
implies axiom $(*)$.
However for our absoluteness results it seems to be crucial that the ground model satisfy
$\MM^{++}$.

\begin{question}
Does $\MM^{++}+$\emph{large cardinals} denies or implies Woodin's $(*)$-axiom?
\end{question}

\subsection{A conjecture on $\Gamma_{\aleph_3}$-logic}

The results of this paper suggest the following definitions.

Let $\Gamma$ be a class of partial orders defined
by some parameter $\lambda$ which is a regular cardinal
definable in some theory $T$ extending $\ZFC$.
Natural examples of such classes $\Gamma$ for $\lambda=\aleph_2$ are the family of
stationary set preserving posets, semiproper posets, proper posets, CCC posets\dots.
 
$\phi(\Gamma,\lambda)$ asserts that
\begin{quote} 
For all $\mathbb{P}\in \Gamma$ and
all Woodin cardinal $\delta>|\mathbb{P}|$ there is
a complete embedding $i:\mathbb{P}\to\mathbb{R}^\lambda_\delta\restriction S$
for some stationary set $S\in V_\delta$ such that
\[
\Vdash_{\mathbb{P}}(\mathbb{R}^\lambda_\delta\restriction S)/
i[\mathbb{P}]\in (\Gamma)^{V^{\mathbb{P}}}
\]
\end{quote} 
Notice that $\phi(\Gamma,\lambda)$ entails that $\Gamma=\Gamma_\lambda$ by Woodin's 
theorem~\ref{thm:wooFA}.

\begin{definition}\label{def:maxplam}
A definable class of posets $\Gamma$ is 
maximal for $\lambda$ with respect to the theory $T$ if
$T$ models the following:
\begin{enumerate}
\item\label{def:maxplam-1}
$\Gamma_\lambda\subseteq\Gamma$.
\item\label{def:maxplam-2}
$\Con(T)\rightarrow\Con(T+\phi(\Gamma,\lambda))$.
\item \label{def:maxplam-3}
$\mathbb{R}^\lambda_\delta\in \Gamma$ for all Woodin cardinals $\delta>\lambda$.
\item
If $i:\mathbb{P}\to\mathbb{Q}$ is a locally complete embedding and $\mathbb{Q}\in\Gamma$, 
then $\mathbb{P}\in\Gamma$ as well.
\item
If for some definable class $\Gamma'$,
$\Con(T+\Gamma'\setminus\Gamma\neq\emptyset)$ then
$\Gamma'=\Gamma_\lambda$ is not consistent with $T$.
\end{enumerate}
\end{definition}

\begin{remark}
Notice that if $\delta_1<\delta_2$ are Woodin cardinals, then
$\mathbb{R}^\lambda_{\delta_1}\restriction T$ completely embeds into
$\mathbb{R}^\lambda_{\delta_2}\restriction T$ for all 
regular cardinals $\lambda<\delta_1$ and all stationary sets $T\in P(R_\lambda)\cap V_{\delta_1}$
(see~\cite[Exercise 2.7.15]{LAR04} and ~\cite[Lemma 2.7.14, Lemma 2.7.16]{LAR04}). 
\end{remark}

\begin{remark}
The following holds:
\begin{enumerate}
\item
The class of all posets is maximal for $\aleph_1$ relative to
$\ZFC+$\emph{ there are class many Woodin cardinals}.
\item
$\SSP$ is maximal for $\aleph_2$ relative to
$\ZFC+$\emph{ there are class many supercompact cardinals}.
\end{enumerate}
\end{remark}



\begin{conjecture}
There is a class $\Gamma_{\aleph_3}$ which is maximal with respect to the theory
$\ZFC+\MM^{++}+$\emph{ large cardinals}. 
\end{conjecture}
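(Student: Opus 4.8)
The plan is to lift to $\aleph_3$ the picture that Theorems~\ref{thm:forcingaxioms-stateaffairs} and~\ref{thm:MM++} give at $\aleph_2$ for $\SSP$. The first step is to name the candidate maximal class: let $\Gamma_{\aleph_3}$ be the class of posets $P$ that preserve $\omega_1$ and $\omega_2$ and such that every $S\subseteq R_{\aleph_3}$ stationary in $V$ stays stationary in $V^P$ --- possibly after restricting $R_{\aleph_3}$ to points of the appropriate cofinality --- and call these the \emph{$\aleph_2$-stationary set preserving} posets. This should be exactly the class governed by an $\aleph_2$-analogue of Shelah's theorem: if $P$ is not $\aleph_2$-stationary set preserving then $\FA_{\aleph_2}(P)$ fails, hence, by Woodin's Theorem~\ref{thm:wooFA-bis} applied to the successor cardinal $\aleph_3=\aleph_2^+$, $S^{\aleph_3}_P$ is not stationary. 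Establishing this Shelah-type theorem gives clause~(1) of Definition~\ref{def:maxplam} --- the inclusion $\{P:S^{\aleph_3}_P\text{ stationary}\}\subseteq\Gamma_{\aleph_3}$ --- and, via Woodin's theorem~\ref{thm:wooFA} once more, also the maximality clause~(5), exactly as for $\SSP$ at $\aleph_2$.

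\textbf{The routine clauses.} Clause~(4), closure of $\Gamma_{\aleph_3}$ under locally complete embeddings, should follow verbatim from the $\SSP$ case: stationarity is downward absolute and cardinals are preserved by subforcings, so if $\mathbb{Q}\in\Gamma_{\aleph_3}$ and $i:\mathbb{P}\to\mathbb{B}(\mathbb{Q}\restriction q)$ is complete, then $\mathbb{P}\in\Gamma_{\aleph_3}$. Clause~(3), $\mathbb{R}^{\aleph_3}_\delta\in\Gamma_{\aleph_3}$ for every Woodin $\delta>\aleph_3$, should follow by running the proof of Theorem~\ref{thm:forcingaxioms-stateaffairs}.\ref{thm:forcingaxioms-stateaffairs-2} one cardinal higher: if $G$ is $V$-generic for $\mathbb{R}^{\aleph_3}_\delta$ then $j_G\restriction H_{\aleph_3}^V=\id$, $j_G(\aleph_1)=\aleph_1$, $j_G(\aleph_2)=\aleph_2$ and $(H_{j_G(\aleph_3)})^{M^G}=V_\delta[G]=(H_{\aleph_3})^{V[G]}$, so any $\mathbb{R}^{\aleph_3}_\delta$-name for a club on a regular $\mu$ of uncountable cofinality is represented by an $f$ with $f(M)$ a genuine club for $G$-many $M\prec V_\alpha$; since $j_G$ fixes the relevant stationary sets, they survive in $V[G]$, and $\omega_1,\omega_2$ are preserved by Woodin's general analysis of the towers $\mathbb{R}^\lambda_\delta$ for regular $\lambda$ (which also gives directly that $\mathbb{R}^{\aleph_3}_\delta$ preserves stationary subsets of $R_{\aleph_3}$).

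\textbf{The crux.} Clause~(2), $\Con(T)\to\Con(T+\phi(\Gamma_{\aleph_3},\aleph_3))$ for $T=\ZFC+\MM^{++}+$\emph{ large cardinals}, is where the real work lies. Since $\phi(\Gamma_{\aleph_3},\aleph_3)$ implies $\{P:S^{\aleph_3}_P\text{ stationary}\}=\Gamma_{\aleph_3}$, this is a \emph{maximal} forcing axiom at the level of $\aleph_2$: every $\aleph_2$-stationary set preserving $\mathbb{P}$ completely embeds into $\mathbb{R}^{\aleph_3}_\delta\restriction S$ with an $\aleph_2$-stationary set preserving quotient, for every Woodin $\delta>|\mathbb{P}|$. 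Following the template of the proof of Theorem~\ref{thm:MM++}, \ref{thm:MM++-1}$\Rightarrow$\ref{thm:MM++-2}, I would try to perform, over a model carrying a supercompact $\kappa$ above the Woodin cardinals reserved for an $\MM^{++}$-iteration, a Laver-guided iteration folding together $\SSP$ posets of size $<\kappa$ (to produce $\MM^{++}$) with $\aleph_2$-stationary set preserving posets (to produce $\phi(\Gamma_{\aleph_3},\aleph_3)$, with $\kappa$ becoming $\aleph_3$), and then, in the resulting model, read off inside each $\mathbb{R}^{\aleph_3}_\delta$-generic a $V$-generic for the relevant $\mathbb{P}$ keeping all the required stationary subsets of $R_{\aleph_3}$ stationary, exactly as the family $\langle H_M:M\in T_{\mathbb{P}}\rangle$ is assembled there.

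\textbf{The main obstacle.} The reason this stays conjectural is that no workable \emph{iteration theorem} is known for $\aleph_2$-stationary set preserving posets, or for any comparably rich class of $\aleph_2$-preserving posets: the revised countable support machinery and the preservation of semiproperness that drive the $\aleph_1$-case have no established analogue at $\aleph_2$, and several naive forcing axioms at $\aleph_2$ are outright inconsistent, so even arranging clauses~(2)--(4) to hold \emph{simultaneously} is delicate. In particular one must reconcile $\FA$ at $\aleph_1$, which pins $2^{\aleph_1}=\aleph_2$, with $\FA$ at $\aleph_2$: this presumably forces a further thinning of $\Gamma_{\aleph_3}$ to posets not blowing up $2^{\aleph_1}$, and one must then check that this thinned class is still closed under locally complete suborders and still contains the towers $\mathbb{R}^{\aleph_3}_\delta$. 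A successful attack would almost certainly require new iteration technology --- side-condition methods seem the likeliest source --- together with a two-level bookkeeping handling $\MM^{++}$ and the $\aleph_2$-axiom within a single construction.
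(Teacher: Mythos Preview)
The statement you are addressing is a \emph{conjecture} in the paper, not a theorem: it appears in the final section on open problems and the paper offers no proof whatsoever. There is therefore no ``paper's own proof'' to compare your proposal against.

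Your write-up is not a proof either, and you say so yourself: you sketch a plausible programme, identify which clauses of Definition~\ref{def:maxplam} look routine and which do not, and then explicitly flag the main obstruction --- the absence of an iteration theorem for any sufficiently rich class of $\aleph_2$-preserving posets --- as the reason the statement ``stays conjectural''. That diagnosis is in line with the paper's own stance: the author simply records the conjecture, notes that a positive answer would yield an $H_{\aleph_3}$-analogue of Theorem~\ref{thm:absMMLORD}, and leaves it open.

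One cautionary remark on your candidate class. You propose $\Gamma_{\aleph_3}$ to be the posets preserving $\omega_1$, $\omega_2$, and stationary subsets of $R_{\aleph_3}$, and you appeal to an ``$\aleph_2$-analogue of Shelah's theorem'' to get clause~(1). No such analogue is established in the paper (or, to my knowledge, in the literature in the generality you need), so this step is itself conjectural. You also note, correctly, that $\MM^{++}$ forces $2^{\aleph_1}=\aleph_2$, which clashes with $\FA_{\aleph_2}$ for many natural posets; reconciling this is not a side issue but goes to the heart of whether any nontrivial $\Gamma$ can satisfy clauses~(2) and~(5) simultaneously over $\ZFC+\MM^{++}$. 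So your outline is a reasonable map of the terrain, but it should be read as such, not as a proof sketch with a single missing lemma.
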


Notice that if the above conjecture stands, one should expect to be able to prove 
the analogue of Theorem~\ref{thm:absMMLORD}
for $H_{\aleph_3}$.

If the above approach is successful at $\aleph_3$, is there a 
cardinal for which it cannot work? I.e.:
\begin{question}
What about maximal classes $\Gamma_\lambda$ for larger cardinals $\lambda$?
\end{question}

\subsection{What about the effects of $\MM^{++}$ on the theory of
$L(P_{\omega_2}\Ord)$?}
Can the methods presented in this paper be of some use in the study of 
$L(P_{\omega_2}\Ord)$ and not just of $H_{\aleph_2}$? 
\begin{question}
Can $\MM^{++}$+\emph{large cardinals} decide in $\SSP$-logic the theory of
$L(P_{\omega_2}\Ord)$?
\end{question}

While we can effectively compute many of the
consequences $\MM^{++}$ has on the theory of $H_{\aleph_2}$, this is not the case for
$L(P_{\omega_2}\Ord)$, for example: 
by~\cite[Remark 1.1.28]{LAR04} $\ZFC$ fails in $L(P_\kappa\Ord)$ 
for all cardinals $\kappa$ if there are $\kappa^+$-many measurable cardinals in $V$.
\begin{question}
Assume $\MM^{++}$ holds. What is the least ordinal $\lambda$ for which 
\[
H_\lambda^{L(P_{\omega_2}\Ord)}\not\models\ZFC?
\]
\end{question}
It is not hard to see that in models of $\MM^{++}$ the several examples of definable 
(with parameter in $H_{\omega_2}$) well-orders of $P(\omega_1)$ provided by
results of Aspero, Caicedo, Larson, Moore, Todor\v{c}evi\'c, Veli\v{c}kovi\'c, 
Woodin and others show that $\lambda$ is larger than
$\aleph_2$ and is at most the $\omega_2+1$-th measurable cardinal of $V$.

\subsection*{Acknowledgements}
I wish to thank Joan Bagaria, Ilijas Farah, Paul Larson, Menachem Magidor,
and Boban Veli\v{c}kovi\'c 
for many useful comments and discussions 
on the material presented in this paper.
The author acknowledges support from the 2009 PRIN grant ``Modelli e Insiemi''
and from the Kurt G\"odel Research Prize Fellowship 2010.

\bibliographystyle{amsplain}
\bibliography{APALMV-2}

\end{document}